\makeatletter \@namedef{subjclassname@2010}{
  \textup{2010} Mathematics Subject Classification}
\newtheorem{thm}{Theorem}[section]
\newtheorem{cor}[thm]{Corollary}
\newtheorem{lem}[thm]{Lemma}
\newtheorem{pro}[thm]{Proposition}
\theoremstyle{remark}
\newtheorem*{rema}{Remark}
\theoremstyle{definition}
\newtheorem*{defn}{Definition}
\newtheorem{exa}[thm]{\textbf{Example}}
\newcommand{\REAL}{\text{\rm{Re}}}
\newcommand{\Ima}{\text{\rm{Im}}}
\newcommand{\R}{\mathbb{R}}
\newcommand{\N}{\mathbb{N}}
\newcommand{\C}{\mathbb{C}}
\begin{document}

\title[Absolute Value of Unbounded Operators]{On The Absolute Value of Unbounded Operators}
\author[I. Boucif, S. Dehimi and M. H. Mortad]{Imene Boucif, Souheyb Dehimi and Mohammed Hichem Mortad*}

\dedicatory{}
\thanks{* Corresponding author.}
\date{}
\keywords{Absolute Value. Triangle Inequality. Normal, Hyponormal,
Self-adjoint and Positive Operators. (Strong) Commutativity. Fuglede
Theorem.}

\subjclass[2010]{Primary 47A63, Secondary 47A62, 47B15, 47B20.}

\address{(The first and third authors): Department of
Mathematics, University of Oran 1, Ahmed Ben Bella, B.P. 1524, El
Menouar, Oran 31000, Algeria.\newline {\bf Mailing address}:
\newline Pr Mohammed Hichem Mortad \newline BP 7085 Seddikia Oran
\newline 31013 \newline Algeria}

\email{matheuseama1@gmail.com}

\email{mhmortad@gmail.com, mortad.hichem@univ-oran1.dz}

\address{(The second author): University of Mohamed El Bachir El Ibrahimi, Bordj Bou Arreridj.
Algeria.}

\email{sohayb20091@gmail.com}

\begin{abstract}
The primary purpose of the present paper is to investigate when
relations of the types $|AB|=|A||B|$, $|A\pm B|\leq |A|+|B|$,
$||A|-|B||\leq |A\pm B|$ and $|\overline{\REAL A}|\leq |A|$ (among
others) hold in an unbounded operator setting. As interesting
consequences, we obtain a characterization of (unbounded)
self-adjointness as well as a characterization of invertibility for
the class of unbounded normal operators.
\end{abstract}

\maketitle

\section{Introduction}

All operators considered here are linear but not necessarily
bounded. If an operator is bounded and everywhere defined, then we
say that it belongs to $B(H)$ (the algebra of all bounded linear
operators on $H$).

Most unbounded operators that we encounter are defined on a subspace
(called domain) of a Hilbert space. If the domain is dense, then we
say that the operator is densely defined. In such case, the adjoint
exists and is unique.

In order that the paper be as self-contained as possible, let us
recall a few basic definitions about non-necessarily bounded
operators. If $S$ and $T$ are two linear operators with domains
$D(S)$ and $D(T)$ respectively, then $T$ is said to be an extension
of $S$, written as $S\subset T$, if $D(S)\subset D(T)$ and $S$ and
$T$ coincide on $D(S)$.

An operator $T$ is said to be closed if its graph is closed in
$H\oplus H$. It is called closable if it has a closed extension and
the smallest closed extension of it, is called its closure and it is
denoted by $\overline{T}$ (a standard result states that $T$ is
closable iff $T^*$ has a dense domain and in which case
$\overline{T}=T^{**}$).

If $T$ is closable, then $S\subset T\Rightarrow \overline{S}\subset
\overline{T}$. If $T$ is densely defined, we say that $T$ is
self-adjoint when $T=T^*$; symmetric if $T\subset T^*$; normal if
$T$ is closed and $TT^*=T^*T$.

The product $ST$ and the sum $S+T$ of two operators $S$ and $T$ are
defined in the usual fashion on the natural domains:

\[D(ST)=\{x\in D(T):~Tx\in D(S)\}\]
and
\[D(S+T)=D(S)\cap D(T).\]

In the event that $S$, $T$ and $ST$ are densely defined, then
\[T^*S^*\subset (ST)^*,\]
with the equality occurring when $S\in B(H)$. Also, when $S$, $T$
and $S+T$ are densely defined, then
\[S^*+T^*\subset (S+T)^*,\]
and the equality holding if $S\in B(H)$.

The product of two closed operators need not be closed and the sum
of two closed operators is not closed either. However, it is known
(among other results) that $TS$ and $T+S$ are closed if $T$ is
closed and $S\in B(H)$.

The real and imaginary parts of a densely defined operator $T$ are
defined respectively by
\[\REAL T=\frac{T+T^*}{2} \text{ and } \Ima T=\frac{T-T^*}{2i}.\]
Clearly, if $T$ is closed, then $\REAL T$ is symmetric but it is not
always self-adjoint. Moreover, it may also fail to be closed.

If $S\in B(H)$ and $T$ is unbounded, then $S$ commutes with $T$ if
$ST\subset TS$. If $S$ and $T$ are normal, then the previous amounts
to the commutativity of their spectral measures (see e.g.
\cite{Arai-2005-Rev-math-phys}) or \cite{SCHMUDG-book-2012}). Two
unbounded normal operators are said to (strongly) commute if their
corresponding spectral measures commute.

If $T$ is such that $<Tx,x>\geq0$ for all $x\in D(T)$, then we say
that $T$ is positive. We can define the unique positive self-adjoint
square root which we denote by $\sqrt T$ (see
\cite{Sebestyen-Tarcsay-square root-NEW} for a new proof of the
existence of the positive square root of unbounded positive
self-adjoint operators). If $T$ is positive and $ST\subset TS$ where
$S\in B(H)$, then $S\sqrt T\subset \sqrt T S$ (see \cite{Bernau
JAusMS-1968-square root}). Also, if $ST\subset TS$ where $S\in B(H)$
is positive, then $\sqrt ST\subset T\sqrt S$ (see e.g.
\cite{Mortad-PAMS2003}).

If $T$ is densely defined and closed, then $T^*T$ (and $TT^*$) is
self-adjoint and positive (a celebrated result due to von-Neumann,
see e.g. \cite{SCHMUDG-book-2012}). We digress a little bit to say
that the self-adjointness of $T^*T$ (say) alone does not imply that
$A$ is closed. A simple counterexample is prescribed in
\cite{Sebestyen-Tarcsay-TT* von Neumann T closed}.
Sebestyén-Tarcsay went on in the same reference to show that the
self-adjointness of \textit{both }$T^*T$ and $TT^*$ implies the
closedness of $T$. A re-proof may be found in
\cite{Gesztesy-Schm\"{u}dgen-2018}.

As just observed, if $T$ is closed then $T^*T$ is self-adjoint and
positive. Hence it is legitimate to define its square root. The
unique positive self-adjoint square root of $T^*T$ is denoted by
$|T|$. It is customary to call it the absolute value or modulus of
$T$. The absolute value plays a prominent role in Operator Theory.
We just give two uses: It intervenes in any version of the polar
decompositions of an operator. We also utilize it when defining
singular values in Matrix Theory.

As alluded, we are mainly (but not only) interested in investigating
when relations of the types
\[|AB|=|A||B|, ~|A\pm B|\leq |A|+|B|, ~||A|-|B||\leq |A\pm B|\text{ and }|\overline{\REAL A}|\leq |A|\]
in an unbounded setting. The all-bounded case has already been
treated in \cite{Mortad-abs-value-BOUNDED}. So, there is no need to
reiterate that the previous inequalities, without any a priori
conditions, are false in general (see e.g.
\cite{Mortad-Oper-TH-BOOK-WSPC}).

If $T$ is closed, then (see e.g. Lemma 7.1 in
\cite{SCHMUDG-book-2012})
\[D(T)=D(|T|)\text{ and } \|Tx\|=\||T|x|\|,~\forall x\in D(T).\]
It is also plain that if $T$ is normal, then $|T|=|T^*|$. Also, in
the event of the normality of $T$, $\overline{\REAL T}$ and
$\overline{\Ima T}$ are self-adjoint (see e.g. \cite{WEI}).

The use of the Fuglede (-Putnam) Theorem is unavoidable when working
with normal operators. For convenience, we recall it next (see e.g.
\cite{Con} for a proof):

\begin{thm}\label{Fuglede Putnam classic}
If $T$ is a bounded operator and $N$ and $M$ are non necessarily
bounded normal operators, then
\[TN\subset MT \Longrightarrow TN^*\subset M^*T.\]
\end{thm}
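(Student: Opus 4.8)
The plan is to reduce the statement to an intertwining relation between one–parameter unitary groups and then invoke Liouville's theorem, following Rosenblum's classical argument. First I would dispose of the heart of the matter, the case in which $N$ and $M$ are \emph{bounded} normal operators and $TN=MT$. Iterating the hypothesis gives $TN^{n}=M^{n}T$ for every $n\geq 0$, and summing the resulting norm-convergent series yields $Te^{wN}=e^{wM}T$ for every $w\in\C$; replacing $w$ by $-\bar w$ and rearranging gives the convenient identity $T=e^{\bar wM}Te^{-\bar wN}$.

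Next I would introduce the operator-valued function $g(w)=e^{-wM^{*}}Te^{wN^{*}}$, which is entire in $w$. Substituting the identity above into the middle factor and using that $M$ commutes with $M^{*}$ and $N$ with $N^{*}$ (normality), I can rewrite $g(w)=e^{\bar wM-wM^{*}}\,T\,e^{wN^{*}-\bar wN}$. The two outer exponents are skew-adjoint, so the outer factors are unitary, whence $\|g(w)\|=\|T\|$ for every $w\in\C$. Applying Liouville's theorem to each scalar entire function $w\mapsto\langle g(w)x,y\rangle$ forces $g$ to be constant, so $g(w)\equiv g(0)=T$. Differentiating $e^{-wM^{*}}Te^{wN^{*}}=T$ at $w=0$ then produces $-M^{*}T+TN^{*}=0$, that is $TN^{*}=M^{*}T$, which is the claim in the bounded case.

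The hard part will be the passage to \emph{unbounded} normal $N$ and $M$. There $e^{wN}$ and the power series above are meaningless in $B(H)$, and the hypothesis is only the inclusion $TN\subset MT$ with its attendant domain constraints. My remedy is to read every exponential through the spectral theorem: writing $M=\int\lambda\,dE(\lambda)$ and $N=\int\mu\,dF(\mu)$, the factors that actually occur are $W_{M}(w)=\int e^{\bar w\lambda-w\bar\lambda}\,dE(\lambda)$ and the analogous $W_{N}(w)$, whose integrands have modulus one; hence these are genuine \emph{bounded} unitaries even when $M$ and $N$ are not, and a unitary conjugation of $T$ by them still has constant norm $\|T\|$. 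The crux is to establish the intertwining of these bounded functions starting \emph{only} from the inclusion $TN\subset MT$, since the norm-convergent series is no longer available and the manipulations must be justified on the correct domains. I would obtain it by first transferring the hypothesis to bounded functions of $N$ and $M$ via the spectral calculus — for instance to resolvents $(N-\lambda)^{-1}$ and $(M-\lambda)^{-1}$, or to spectral projections through an approximation argument — and then running the Liouville step above to conclude $TN^{*}\subset M^{*}T$. It is exactly this bookkeeping, turning the densely defined inclusion into honest identities between bounded operators while tracking domains, that I expect to be the main obstacle.
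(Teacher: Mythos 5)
First, a point of reference: the paper does not prove this theorem at all --- it is recalled as background with a citation to Conway --- so there is no internal proof to match. Your bounded-case argument is correct and complete, and it is precisely Rosenblum's classical proof (the one used in the cited source): iterate $TN=MT$ to get $Te^{wN}=e^{wM}T$, write $g(w)=e^{-wM^*}Te^{wN^*}$ as a product of unitaries around $T$ using the normality of $N$ and $M$ to combine exponentials with skew-adjoint exponents, invoke Liouville's theorem for the scalar functions $w\mapsto\langle g(w)x,y\rangle$, and differentiate at $w=0$. No complaints there.

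For the theorem as actually stated, however --- $N$ and $M$ unbounded --- your proposal has a genuine gap, which you yourself flag: the passage from the inclusion $TN\subset MT$ to any identity between bounded functions of $N$ and $M$ is never carried out, and that passage is the entire content of the unbounded statement. Moreover, your concrete suggestion of resolvents can fail outright: an unbounded normal operator need not have a single point in its resolvent set (multiplication by $z$ on $L^2(\C)$ is normal with spectrum all of $\C$), so $(N-\lambda)^{-1}$ may not exist for any $\lambda$, and even when both resolvent sets are nonempty they need not meet. The completion that actually works --- and is the one in the paper's cited reference --- is a spectral-projection cut-down rather than a direct attack on the unitary groups: with $E$, $F$ the spectral measures of $M$, $N$, set $E_m=E(\{z:|z|\leq m\})$ and $F_n=F(\{z:|z|\leq n\})$. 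Since $\ran F_n\subset D(N)$, $TN\subset MT$ forces $Tx\in D(M)$ for $x\in\ran F_n$, and using $E_mM\subset ME_m$ one checks the \emph{bounded} intertwining $(E_mTF_n)(NF_n)=(ME_m)(E_mTF_n)$, where $NF_n$ and $ME_m$ are bounded normal operators. Your bounded case then yields $(E_mTF_n)(NF_n)^*=(ME_m)^*(E_mTF_n)$; applying this to $x\in D(N^*)=D(N)$, using $F_nN^*x=N^*F_nx$, letting $n\to\infty$ and then $m\to\infty$, and invoking the closedness of $M^*$ gives $Tx\in D(M^*)$ and $TN^*x=M^*Tx$, i.e. $TN^*\subset M^*T$. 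Until this reduction (or an equivalent one, e.g. via the bounded transform) is written down, your proof covers only the bounded case and not the statement in question.
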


There is no harm in recalling some more definitions. The following
definition will be used often.

\begin{defn}\label{defn pos unbd schmud}
Let $T$ and $S$ be unbounded positive self-adjoint operators. We say
that $S\geq T$ if $D(S^{\frac{1}{2}})\subseteq D(T^{\frac{1}{2}})$ and $%
\left\Vert S^{\frac{1}{2}}x\right\Vert \geq \left\Vert T^{\frac{1}{2}%
}x\right\Vert$ for all $x\in D(S^{\frac{1}{2}}).$
\end{defn}

The Heinz Inequality is valid for positive unbounded self-adjoint
operators as well. That is and in particular, if $S$ and $T$ are
self-adjoint operators, then (see e.g. \cite{SCHMUDG-book-2012})
\[S\geq T\geq0\Longrightarrow \sqrt S\geq \sqrt T.\]
Also, as in Page 200 in \cite{WEI}, if $S$ and $T$ are self-adjoint,
$T$ is boundedly invertible and $S\geq T\geq0$, then $S$ is
boundedly invertible and $S^{-1}\leq T^{-1}$.

Finally, we recall the definition of an unbounded hyponormal
operator.

\begin{defn}
A densely defined operator $T$ with domain $D(T)$ is called
hyponormal if
\[D(T)\subset D(T^*)\text{ and } \|T^*x\|\leq\|Tx\|,~\forall x\in D(T).\]
\end{defn}

As in the bounded case, we have (the proof may be found in
\cite{Dehimi-Mortad-REID}):

\begin{pro}
Let $T$ be a closed hyponormal operator. Then
\[T^*T\geq TT^*.\]
\end{pro}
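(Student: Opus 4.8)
The plan is to reduce everything to the two defining conditions of the order relation in Definition \ref{defn pos unbd schmud}, applied to the positive self-adjoint operators $T^*T$ and $TT^*$. Since $T$ is closed and densely defined, von Neumann's theorem guarantees that both $T^*T$ and $TT^*$ are positive self-adjoint, so their square roots $|T|=\sqrt{T^*T}$ and $|T^*|=\sqrt{TT^*}$ exist (here I use that $T$ closed gives $(T^*)^*=T$, whence $|T^*|=\sqrt{(T^*)^*T^*}=\sqrt{TT^*}$). Thus $T^*T\geq TT^*$ means precisely that $D(|T|)\subseteq D(|T^*|)$ and $\||T^*|x\|\leq\||T|x\|$ for all $x\in D(|T|)$, and it is these two statements that I would verify.

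The key tool is the identity recalled earlier for any closed operator $S$, namely $D(S)=D(|S|)$ together with $\|Sx\|=\||S|x\|$ for all $x\in D(S)$. First I would apply this to $S=T$ to obtain $D(|T|)=D(T)$ and $\|Tx\|=\||T|x\|$. Then, since the adjoint $T^*$ is automatically closed, I would apply the same identity to $S=T^*$ to get $D(|T^*|)=D(T^*)$ and $\|T^*x\|=\||T^*|x\|$ for all $x\in D(T^*)$. This translates the abstract square-root quantities back into the graph norms of $T$ and $T^*$, which is exactly where the hyponormality hypothesis can be exploited.

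With these identifications in hand, both required conditions follow directly from the definition of hyponormality. For the domain inclusion, hyponormality gives $D(T)\subseteq D(T^*)$, hence $D(|T|)=D(T)\subseteq D(T^*)=D(|T^*|)$. For the norm inequality, I would fix $x\in D(|T|)=D(T)\subseteq D(T^*)$ and chain the equalities with the hyponormal estimate $\|T^*x\|\leq\|Tx\|$, obtaining $\||T^*|x\|=\|T^*x\|\leq\|Tx\|=\||T|x\|$. Together these two facts are exactly $T^*T\geq TT^*$ in the sense of Definition \ref{defn pos unbd schmud}.

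I do not expect a genuine analytic obstacle here; the content is essentially a change of language. The only point requiring care is the bookkeeping around $T^*$: one must invoke the closedness of $T^*$ and the relation $(T^*)^*=T$ in order to legitimately write $|T^*|=\sqrt{TT^*}$ and to apply the closed-operator identity $D(T^*)=D(|T^*|)$, $\|T^*x\|=\||T^*|x\|$. Once that identification is secured, the domain inclusion and the norm inequality of hyponormality map one-to-one onto the two clauses of the order relation, and the conclusion is immediate.
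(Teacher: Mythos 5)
Your proof is correct and complete. Note that the paper itself gives no argument for this proposition --- it defers entirely to the cited reference \cite{Dehimi-Mortad-REID} --- so there is no internal proof to compare against; your write-up supplies exactly the standard argument one would expect there. The reduction is sound at every step: $T$ closed and densely defined gives, via von Neumann's theorem, that $T^*T$ and $TT^*$ are positive self-adjoint; $T^*$ is closed with $(T^*)^*=T$, which legitimizes $|T^*|=\sqrt{TT^*}$; and the identity $D(S)=D(|S|)$ with $\|Sx\|=\||S|x\|$ for closed $S$ (recalled in the paper from Lemma 7.1 of \cite{SCHMUDG-book-2012}), applied to $S=T$ and $S=T^*$, converts the two clauses of Definition \ref{defn pos unbd schmud} precisely into the two defining conditions of hyponormality, namely $D(T)\subset D(T^*)$ and $\|T^*x\|\leq\|Tx\|$ on $D(T)$. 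Your closing caveat about the bookkeeping around $T^*$ is exactly the right point of care, and you handle it correctly.
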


The following recently proved result will be needed in some of our
proofs.

\begin{thm}\label{Dehimi-Mortad-HYPONORMAL-ABS-VALUE-THM}(\cite{Dehimi-Mortad-REID})
Let $T$ be a closed hyponormal operator. Then
\[
|<Tx,x>|\leq <|T|x,x>\text{ for all }x\in D(T).
\]
\end{thm}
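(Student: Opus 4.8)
The plan is to reduce the desired quadratic-form inequality to a single application of the Cauchy--Schwarz inequality via the polar decomposition of $T$, and then to insert hyponormality through the comparison $T^*T\geq TT^*$. First I would write the (unbounded) polar decomposition $T=U|T|$, where $U\in B(H)$ is the partial isometry with initial space $\overline{\ran|T|}$ and final space $\overline{\ran T}$; recall $T^*=|T|U^*$ and $D(T)=D(|T|)$. Fix $x\in D(T)$. Since $x\in D(|T|)\subseteq D(|T|^{1/2})$, I would factor $|T|x=|T|^{1/2}(|T|^{1/2}x)$ and use that $U$ is bounded and $|T|^{1/2}$ is self-adjoint, so that $(U|T|^{1/2})^*=|T|^{1/2}U^*$, to obtain
\[
\langle Tx,x\rangle=\langle U|T|^{1/2}(|T|^{1/2}x),x\rangle=\langle |T|^{1/2}x,|T|^{1/2}U^*x\rangle .
\]
Cauchy--Schwarz then yields
\[
|\langle Tx,x\rangle|\leq \||T|^{1/2}x\|\cdot\||T|^{1/2}U^*x\| =\sqrt{\langle|T|x,x\rangle}\,\sqrt{\langle|T^*|x,x\rangle},
\]
the last equality resting on $\||T|^{1/2}U^*x\|^2=\langle|T^*|x,x\rangle$, which in turn follows from the polar-decomposition identity $|T^*|^{1/2}=U|T|^{1/2}U^*$ together with the fact that $U$ is isometric on its initial space. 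Notice this mixed bound needs only that $T$ is closed; hyponormality has not yet been used.

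Next I would bring in hyponormality. By the Proposition above, $T^*T\geq TT^*$, that is $|T|^2\geq|T^*|^2\geq 0$, so the Heinz inequality recalled in the introduction gives $|T|\geq|T^*|$. By the order relation for positive self-adjoint operators this means $D(|T|^{1/2})\subseteq D(|T^*|^{1/2})$ and $\||T^*|^{1/2}x\|\leq\||T|^{1/2}x\|$ for all $x\in D(|T|^{1/2})$, i.e. $\langle|T^*|x,x\rangle\leq\langle|T|x,x\rangle$. Feeding this into the previous estimate gives
\[
|\langle Tx,x\rangle|\leq \sqrt{\langle|T|x,x\rangle}\,\sqrt{\langle|T|x,x\rangle}=\langle|T|x,x\rangle ,
\]
which is exactly the asserted inequality.

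The routine parts are the Cauchy--Schwarz step and the monotonicity of the square root. The delicate point, which I would write out in full, is the domain bookkeeping hidden in the unbounded polar decomposition: one must verify that $U^*x\in D(|T|^{1/2})$ whenever $x\in D(T)$ (so that the rearranged inner product above is legitimate) and justify $|T^*|^{1/2}=U|T|^{1/2}U^*$ in the unbounded setting. The observation that makes this painless is that hyponormality forces $D(T)\subseteq D(T^*)=D(|T^*|)\subseteq D(|T^*|^{1/2})$; since the domain of $U|T|^{1/2}U^*$ is precisely $\{y:U^*y\in D(|T|^{1/2})\}=D(|T^*|^{1/2})$, membership $x\in D(|T^*|^{1/2})$ is exactly what guarantees $U^*x\in D(|T|^{1/2})$. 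In particular both $\langle|T|x,x\rangle$ and $\langle|T^*|x,x\rangle$ are honest inner products rather than mere form expressions, so every manipulation above is on firm ground.
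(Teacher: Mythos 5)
Your proof is correct, and there is nothing in the paper to compare it against line by line: the paper quotes this theorem from \cite{Dehimi-Mortad-REID} without reproducing a proof (only the auxiliary fact $T^*T\geq TT^*$ is recalled alongside it). Judged on its own, your route is a clean, self-contained alternative: you first establish the mixed Schwarz inequality $|\langle Tx,x\rangle|\leq \langle |T|x,x\rangle^{1/2}\langle |T^*|x,x\rangle^{1/2}$ via the polar decomposition $T=U|T|$, the rule $(US)^*=S^*U^*$ for bounded $U$, and Cauchy--Schwarz, and then convert $|T^*|$ into $|T|$ using the paper's Proposition $T^*T\geq TT^*$ together with the unbounded Heinz inequality, exactly in the sense of the order of Definition on positive self-adjoint operators. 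All the unbounded-polar-decomposition facts you invoke ($T^*=|T|U^*$, $D(T)=D(|T|)$, and $|T^*|^{1/2}=U|T|^{1/2}U^*$ as a genuine operator identity, which follows from the block decomposition of $|T|^{1/2}$ along $H=\overline{\ran |T|}\oplus \ker |T|$ and uniqueness of the positive square root, or from $f(|T^*|)=Uf(|T|)U^*$ for Borel $f$ with $f(0)=0$) are standard and available in \cite{SCHMUDG-book-2012}. The cited source derives the theorem instead from a generalization of Reid's inequality; your argument trades that machinery for Kato-type mixed Schwarz plus Heinz, which is arguably more elementary. One small inaccuracy to fix: your parenthetical claim that the mixed bound ``needs only that $T$ is closed'' is not right as stated for every $x\in D(T)$, since for a general closed $T$ there is no guarantee that $x\in D(|T^*|^{1/2})$, equivalently that $U^*x\in D(|T|^{1/2})$, so the rearrangement $\langle |T|x,U^*x\rangle=\langle |T|^{1/2}x,|T|^{1/2}U^*x\rangle$ may be illegitimate; it is precisely hyponormality, through $D(T)\subseteq D(T^*)=D(|T^*|)\subseteq D(|T^*|^{1/2})$, that licenses this step. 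Your final paragraph supplies exactly this verification, so the assembled proof is complete --- simply delete or amend that parenthetical remark.
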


Despite the fact that we have recalled most results which will be
used in our proofs, we refer readers to \cite{SCHMUDG-book-2012} and
\cite{Simon-Operator-Theory-NEW} for further reading. We also refer
readers to \cite{Birman-Solomjak-sepctral theorem of s.a. operators}
or \cite{SCHMUDG-book-2012} for the Spectral Theorem.

\section{The Absolute Value and Products}

We start with the following known result which is recalled for
readers convenience.

\begin{pro}\label{sqrt AB=sqrt A sqrt B PRO}
Let $A$ be a self-adjoint positive operator and let $B\in B(H)$ be
positive (hence also self-adjoint). If $BA\subset AB$, then $AB$
(and $\overline{BA}$) is self-adjoint, positive and
$\overline{BA}=AB$. Moreover,
\[\sqrt{AB}=\sqrt A\sqrt B.\]
\end{pro}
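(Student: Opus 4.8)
The plan is to reduce the whole statement to one central fact: that $G:=\sqrt A\,\sqrt B$ is itself positive and self-adjoint, everything else being formal. First I would record the commutations produced by the two square-root results quoted in the introduction. Since $B\in B(H)$ is positive and $BA\subset AB$, Mortad's result gives $\sqrt B\,A\subset A\,\sqrt B$; applying Bernau's result to the positive self-adjoint $A$ and the bounded $\sqrt B$ then gives $\sqrt B\,\sqrt A\subset\sqrt A\,\sqrt B$. Write $P=\sqrt A$ and $C=\sqrt B$, so that $CP\subset PC$. I would also note that $C$ and $B$ commute with every spectral projection $E_n:=E_A([0,n])$ of $A$: indeed $B$ commutes with the spectral measure of the normal (self-adjoint) $A$ by Theorem \ref{Fuglede Putnam classic}, and $C=\sqrt B$ inherits this as a norm-limit of polynomials in $B$. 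Each $E_n$ has range in $D(A)\subset D(P)$, commutes with $P$ (being a function of $A$), satisfies $E_n\to I$ strongly, and commutes with $C$ and $B$.

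The main step is to prove that $G=PC$ is positive self-adjoint. Closedness is immediate, as $G$ is the product of the closed $P$ with the bounded $C$. For the adjoint, the product rule applied with the bounded left factor gives $(CP)^*=PC=G$, hence $\overline{CP}=(CP)^{**}=G^*$; and since $CP\subset PC=G$ with $G$ closed we already have $G^*=\overline{CP}\subset G$. The real work is the reverse inclusion $G\subset G^*$, i.e. $PC\subset\overline{CP}$. Here, given $x\in D(PC)$ (so $Cx\in D(P)$), I would set $x_n:=E_nx\in D(A)\subset D(CP)$, and compute, using that $E_n$ commutes with $C$ and $P$, that $CP\,x_n=PC\,x_n=P\,E_n\,Cx=E_n\,PCx\to PCx$ while $x_n\to x$; this exhibits $x\in D(\overline{CP})$ with $\overline{CP}\,x=PCx$. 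Therefore $G^*=\overline{CP}=PC=G$, so $G$ is self-adjoint. Positivity then follows by factoring $C=C^{1/2}C^{1/2}$ (the factors again commuting with $P$) and writing, on the core, $\langle PCx,x\rangle=\langle P(C^{1/2}x),C^{1/2}x\rangle\ge0$, since $P\ge0$ and $C^{1/2}x\in D(P)$.

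With $G$ positive self-adjoint in hand the remaining assertions are routine. Squaring and using $CP\subset PC$ gives $G^2=P(CP)C\subset P(PC)C=P^2C^2=AB$; conversely, for $x\in D(AB)$ one checks, from $Bx\in D(A)=D(P^2)$, $Cx\in D(P)$ and the commutation of $C$ with $P$, that $x\in D(G^2)$ with $G^2x=P^2C^2x=ABx$, so $G^2=AB$. In particular $AB=G^2=G^*G$ is self-adjoint and positive. Since $B$ is the bounded left factor of $BA$, the product rule yields $(BA)^*=AB$, whence $\overline{BA}=(BA)^{**}=(AB)^*=AB$, which is the asserted equality $\overline{BA}=AB$. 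Finally $G=\sqrt A\,\sqrt B$ is a positive self-adjoint operator with $G^2=AB$, so by the uniqueness of the positive self-adjoint square root $\sqrt{AB}=G=\sqrt A\,\sqrt B$.

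I expect the self-adjointness of $G$, equivalently $\overline{CP}=PC$, to be the only genuine obstacle. A naive attempt to read it off from the adjoint calculus fails, because $D(CP)=D(P)$ is in general a \emph{proper} subset of $D(PC)$ (already in a multiplication model one sees that $\sqrt B$ need not carry $D(A)$ into itself), so $CP\subsetneq PC$ strictly and $CP$ is only \emph{essentially} equal to $PC$. Bridging this gap is exactly where strong commutativity, namely the commutation of $C$ and $B$ with the spectral projections $E_n$ of $A$, becomes indispensable, and it is precisely what makes the truncation argument above go through.
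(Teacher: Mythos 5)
Your strategy is sound and, modulo the one step discussed below, it works; but it is genuinely different from the paper's route. The paper does not prove the self-adjointness of $AB$ at all: it quotes the Spectral Theorem argument of Lemma 3.1 in \cite{Jung-Mortad-Stochel} for the first assertion (self-adjointness, positivity, $\overline{BA}=AB$), derives $\sqrt B\sqrt A\subset\sqrt A\sqrt B$ exactly as you do, and then \emph{reapplies that first assertion to the pair} $(\sqrt A,\sqrt B)$ to conclude that $G=\sqrt A\sqrt B$ is self-adjoint and positive. The identity $G^2=AB$ is then obtained with no domain computation: $G^2\subset AB$ formally, and since both $G^2$ and $AB$ are (already known to be) self-adjoint, the maximality of self-adjoint operators forces equality; uniqueness of the positive square root finishes. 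You run the logic in the opposite direction: you prove the self-adjointness of $G=PC$ by hand (the truncation $x_n=E_nx$ showing $PC\subset\overline{CP}$, hence $PC=\overline{CP}=(PC)^{*}{}^{*}\,$, i.e. $G=G^*$), and then obtain the self-adjointness and positivity of $AB$ as a \emph{consequence} of $AB=G^2=G^*G$, together with an explicit derivation of $\overline{BA}=(BA)^{**}=(AB)^*=AB$ which the paper leaves to the citation. Your route is self-contained where the paper's leans on an external lemma, and it correctly isolates the true difficulty, namely that $CP$ is only essentially equal to $PC$; the price is that you must prove the inclusion $AB\subset G^2$ by hand, which is exactly where the paper's maximality trick saves work.

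That inclusion is the one gap. You write that for $x\in D(AB)$ one checks, from $Bx\in D(A)=D(P^2)$, $Cx\in D(P)$ and the commutation of $C$ with $P$, that $x\in D(G^2)$ with $G^2x=ABx$. Granting $Cx\in D(P)$, the rest is indeed formal: applying $CP\subset PC$ to $Cx$ gives $CPCx=PC^2x=PBx$, which lies in $D(P)$ because $Bx\in D(P^2)$, so $G^2x=P^2Bx=ABx$. But the ingredient $Cx\in D(P)$ is not justified by anything you established: your commutation relations give invariance of $D(P)$ (and of $D(A)$) under $C$, which cannot be applied here since $D(AB)\not\subset D(P)$ in general; in a multiplication model with $A$ multiplication by $1/t$ and $B$ multiplication by $t$ on $L^2(0,1)$ one has $D(AB)=L^2$, which is much larger than $D(\sqrt A)$. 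The claim is true, but it needs its own truncation argument, for instance: for $x\in D(AB)$,
\[\|PE_nCx\|^2=\langle AE_nCx,E_nCx\rangle=\langle E_nABx,E_nx\rangle\leq\|ABx\|\,\|x\|\]
(using $ACE_nx=CAE_nx$, then $BA\subset AB$, $BE_n=E_nB$ and $E_nA\subset AE_n$), and since $\|PE_nCx\|^2=\int_{[0,n]}\lambda\,d\langle E_\lambda Cx,Cx\rangle$ increases with $n$, its boundedness yields $Cx\in D(\sqrt A)=D(P)$; a similar computation gives $PE_n(CPCx)=E_nABx$, and closedness of $P$ finishes. Alternatively, you can bypass the converse inclusion altogether in the paper's spirit: the same truncation shows that $AB$ is symmetric, and then $G^2\subset AB$ with $G^2$ self-adjoint yields $G^2=AB$ by maximality of self-adjoint operators.
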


\begin{proof}The self-adjointness and the positiveness of $AB$
follow by invoking the Spectral Theorem as carried out in Lemma 3.1
of \cite{Jung-Mortad-Stochel}. The equality $\sqrt{AB}=\sqrt A\sqrt
B$ may also be obtained via a similar argument. Alternatively, we
may proceed as follows: We have
\[BA\subset AB\Longrightarrow \sqrt BA\subset A\sqrt B \Longrightarrow \sqrt B\sqrt A\subset \sqrt A\sqrt B.\]
Now, by the first part of the proof, $\sqrt A\sqrt B$ is
self-adjoint (and positive) and hence so is $(\sqrt A\sqrt B)^2$.
But,
\[(\sqrt A\sqrt B)^2=\sqrt A\sqrt B\sqrt A\sqrt B\subset \sqrt A\sqrt A\sqrt B\sqrt B=AB.\]
Next, as both $(\sqrt A\sqrt B)^2$ and $AB$ are self-adjoint, then
by the maximality of self-adjoint operators (see e.g.
\cite{SCHMUDG-book-2012}, cf. \cite{Meziane-Mortad}), we obtain
\[(\sqrt A\sqrt B)^2=AB.\]

Accordingly, by the uniqueness of the positive square root, we infer
that
\[\sqrt{AB}=\sqrt A\sqrt B,\]
as needed.
\end{proof}

The following perhaps known result will be used without further
notice. Its power lies in the fact that $B$ is not assumed to be
normal (and so it cannot be established using the Spectral Theorem).

\begin{pro}\label{abs A abs B abs B abs A PRO}
Let $A$ be normal and let $B\in B(H)$. Then
\[BA\subset AB\Longrightarrow |B| |A|\subset |A| |B|,\]
i.e. $|B| |A|x=|A| |B|x$ whichever $x\in D(B|A|)=D(|A|)=D(A)$.
\end{pro}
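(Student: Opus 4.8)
The plan is to reduce everything to commutation statements and then pass through square roots twice, using the Fuglede--Putnam theorem to handle $A^*$. This detour is unavoidable precisely because $B$ is \emph{not} assumed normal, so the Spectral Theorem is unavailable for $B$. First, since $A$ is normal and $B\in B(H)$, Theorem \ref{Fuglede Putnam classic} applied to $BA\subset AB$ gives
\[BA^*\subset A^*B.\]
Next I would show that $B$ commutes with $|A|^2=A^*A$. Fix $x\in D(A^*A)$, so $x\in D(A)$ and $Ax\in D(A^*)$. From $BA\subset AB$ we get $Bx\in D(A)$ and $ABx=BAx$; from $BA^*\subset A^*B$ applied to $Ax\in D(A^*)$ we get $B(Ax)\in D(A^*)$ and $A^*B(Ax)=BA^*(Ax)$. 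Hence $ABx=BAx\in D(A^*)$, so $Bx\in D(A^*A)$, and
\[(A^*A)Bx=A^*(ABx)=A^*B(Ax)=BA^*(Ax)=B(A^*A)x,\]
which establishes $B(A^*A)\subset (A^*A)B$.

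Now I would promote this to the square root. Since $A^*A$ is positive self-adjoint and $B\in B(H)$, the commutation result of Bernau recalled in the introduction (\cite{Bernau JAusMS-1968-square root}) yields $B|A|=B\sqrt{A^*A}\subset \sqrt{A^*A}\,B=|A|B$. To bring in $|B|$ I must first get $B^*$ to commute with $|A|$ as well. Taking adjoints of $B|A|\subset|A|B$ reverses the inclusion; using that $|A|$ is self-adjoint together with the product-adjoint rules recalled in the introduction, namely $(B|A|)^*=|A|B^*$ (equality, as $B\in B(H)$) and $B^*|A|\subset(|A|B)^*$ (inclusion only, since the left factor $|A|$ is unbounded), I obtain
\[B^*|A|\subset (|A|B)^*\subset (B|A|)^*=|A|B^*.\]
Combining $B|A|\subset|A|B$ with $B^*|A|\subset|A|B^*$: for $x\in D(|A|)$ one has $Bx\in D(|A|)$, whence
\[B^*B\,|A|x=B^*(|A|Bx)=|A|(B^*Bx),\]
so that $B^*B\,|A|\subset |A|\,B^*B$. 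Finally, since $B^*B\in B(H)$ is positive and commutes with the (possibly unbounded) self-adjoint $|A|$, the companion square-root result recalled in the introduction (\cite{Mortad-PAMS2003}) gives
\[|B|\,|A|=\sqrt{B^*B}\,|A|\subset |A|\,\sqrt{B^*B}=|A|\,|B|,\]
and since $|B|\in B(H)$ so that $D(|B||A|)=D(|A|)=D(A)$, this is exactly the claimed inclusion.

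I expect the main obstacle to be the adjoint computation: one must keep track that $(|A|B)^*$ only \emph{contains} $B^*|A|$ (the unbounded left factor precluding equality), while $(B|A|)^*=|A|B^*$ holds with equality because $B$ is bounded, and it is exactly the chaining of these two facts that delivers $B^*|A|\subset|A|B^*$. The remaining difficulty is the domain bookkeeping in the two commutation steps; beyond that, each passage from a square to its square root is furnished by the two cited lemmas, so no spectral-theoretic argument for $B$ is needed.
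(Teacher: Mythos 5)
Your proposal is correct and follows essentially the same route as the paper's own proof: Fuglede to get $BA^*\subset A^*B$, then $BA^*A\subset A^*AB$, Bernau's square-root lemma to obtain $B|A|\subset |A|B$, adjoints to get $B^*|A|\subset |A|B^*$, hence $B^*B|A|\subset |A|B^*B$, and finally the bounded-positive square-root lemma of \cite{Mortad-PAMS2003} to conclude $|B||A|\subset |A||B|$. Your version merely makes explicit the domain bookkeeping and the adjoint-rule chain $B^*|A|\subset(|A|B)^*\subset(B|A|)^*=|A|B^*$ that the paper compresses into ``by taking adjoints.''
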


\begin{proof}
Since $BA\subset AB$ and $A$ is normal, we get $BA^*\subset A^*B$.
Hence
\[BA^*A\subset A^*BA\subset A^*AB.\]
Therefore, $B|A|\subset |A|B$. As $|A|$ is self-adjoint, it follows
by taking adjoints that $B^*|A|\subset |A|B^*$. Thus,
\[B|A|\subset |A|B\Longrightarrow B^*B|A|\subset B^*|A|B\subset |A|B^*B.\]
Accordingly,
\[|B| |A|\subset |A| |B|,\]
as required.
\end{proof}

\begin{cor}\label{corollary A*AB*B self-adjoint}
Let $A$ be normal and let $B\in B(H)$. If $BA\subset AB$, then
$A^*AB^*B$ is self-adjoint.
\end{cor}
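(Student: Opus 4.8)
The plan is to reduce the statement to Proposition~\ref{sqrt AB=sqrt A sqrt B PRO}. Since $A$ is normal (hence closed), $A^*A=|A|^2$ is self-adjoint and positive, while $B^*B=|B|^2$ is a bounded positive operator. Thus the product $A^*AB^*B$ will be self-adjoint as soon as we exhibit the one-sided commutation
\[
B^*B\,A^*A\subset A^*A\,B^*B,
\]
for then we may invoke Proposition~\ref{sqrt AB=sqrt A sqrt B PRO} with the self-adjoint positive operator $A^*A$ playing the role of the unbounded factor and the bounded positive operator $B^*B$ playing the role of the bounded factor.

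To obtain that commutation I would reuse the computation already carried out in the proof of Proposition~\ref{abs A abs B abs B abs A PRO}. Starting from $BA\subset AB$ and the normality of $A$, the Fuglede Theorem (Theorem~\ref{Fuglede Putnam classic}) gives $BA^*\subset A^*B$, whence $BA^*A\subset A^*AB$; taking adjoints and using $B\in B(H)$ yields $B^*A^*A\subset A^*AB^*$. Composing these two inclusions, for every $x\in D(A^*A)$ one has $Bx\in D(A^*A)$ with $BA^*Ax=A^*ABx$, and then also $B^*Bx\in D(A^*A)$ with $B^*BA^*Ax=B^*A^*ABx=A^*AB^*Bx$, which is precisely the desired inclusion $B^*B\,A^*A\subset A^*A\,B^*B$.

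Finally, applying Proposition~\ref{sqrt AB=sqrt A sqrt B PRO} to $A^*A$ (self-adjoint, positive) and $B^*B\in B(H)$ (positive), together with the commutation just established, shows that $A^*AB^*B$ is self-adjoint (in fact positive), which is the assertion.

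The only point requiring care---and the main potential obstacle---is the bookkeeping of domains in the two inclusions above: because $A^*A$ is unbounded, an identity such as $B^*BA^*A=A^*AB^*B$ cannot be expected to hold globally, so one must track precisely that each vector produced remains in $D(A^*A)$, ensuring that the composition of the two one-sided commutation relations is legitimate and lands in the inclusion direction $\subset$ demanded by Proposition~\ref{sqrt AB=sqrt A sqrt B PRO}. Once the hypotheses are matched exactly, the conclusion is immediate.
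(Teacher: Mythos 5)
Your proof is correct and takes essentially the same route as the paper: the paper also derives the one-sided commutation $B^*BA^*A\subset A^*AB^*B$ from $BA\subset AB$ via the Fuglede Theorem and taking adjoints (the computation it refers to ``as above'' in the proof of Proposition~\ref{abs A abs B abs B abs A PRO}), and then applies Proposition~\ref{sqrt AB=sqrt A sqrt B PRO} to the self-adjoint positive operators $A^*A$ and $B^*B$. Your explicit domain bookkeeping merely spells out what the paper leaves implicit.
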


\begin{proof}
As above, we obtain $B^*BA^*A\subset A^*AB^*B$. Since $B^*B$ and
$A^*A$ are self-adjoint, Proposition \ref{sqrt AB=sqrt A sqrt B PRO}
implies the self-adjointness of $A^*AB^*B$.
\end{proof}

\begin{thm}\label{abs value AB MAIN THM}
Let $A$ be normal and let $B\in B(H)$. If $BA\subset AB$, then
\[|\overline{BA}|=|AB|=|A||B|=\overline{|B||A|}.\]
\end{thm}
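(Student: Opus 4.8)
The plan is to collapse all four expressions onto the single positive self-adjoint operator $|A||B|$ and then to finish by the uniqueness of the positive square root. First I would dispose of the last equality. By Proposition \ref{abs A abs B abs B abs A PRO} we already have $|B||A|\subset |A||B|$, so Proposition \ref{sqrt AB=sqrt A sqrt B PRO}, applied to the self-adjoint positive operator $|A|$ and the bounded positive operator $|B|$, gives at once that $|A||B|$ is self-adjoint and positive and that $\overline{|B||A|}=|A||B|$. The same proposition, applied instead to $A^*A$ and $B^*B$ (whose commutation $B^*BA^*A\subset A^*AB^*B$ was recorded in the proof of Corollary \ref{corollary A*AB*B self-adjoint}), yields $\sqrt{A^*AB^*B}=\sqrt{A^*A}\sqrt{B^*B}=|A||B|$; squaring, I obtain the identity $(|A||B|)^2=A^*AB^*B$, which will be the common target for the remaining two absolute values.

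Next I would prove that $\overline{BA}=AB$, which makes the equality $|\overline{BA}|=|AB|$ automatic. Since $A$ is closed and $B\in B(H)$, the product $AB$ is closed, so $BA\subset AB$ gives $\overline{BA}\subset AB$; only the reverse inclusion needs work. Here I would invoke the Fuglede Theorem \ref{Fuglede Putnam classic}: from $BA\subset AB$ it follows that $BA^*\subset A^*B$, so $B$ commutes with both $A$ and $A^*$, whence $B$ (and likewise $B^*$) commutes with every spectral projection $E(\Delta)$ of $A$. Given $x\in D(AB)$, I would truncate by $x_n=E(\{|\lambda|\le n\})x$; then $x_n\in D(A)=D(BA)$, $x_n\to x$, and using $BE(\Delta)=E(\Delta)B$ one checks that $BAx_n=ABx_n=E(\{|\lambda|\le n\})ABx\to ABx$. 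This exhibits $x$ in $D(\overline{BA})$ with $\overline{BA}x=ABx$, giving $AB\subset\overline{BA}$ and hence $\overline{BA}=AB$.

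It remains to identify $|AB|$ with $|A||B|$, and this is where I expect the real difficulty. Having $\overline{BA}=AB$, I can compute the adjoint precisely: $(AB)^*=(\overline{BA})^*=(BA)^*=A^*B^*$, the last equality holding because $B\in B(H)$ sits on the left. Formally,
\[(AB)^*(AB)=A^*B^*AB=A^*AB^*B=(|A||B|)^2,\]
the middle step moving $B^*$ past $A$ by means of the commutation $B^*A\subset AB^*$ (again a consequence of $B^*$ commuting with the spectral measure of $A$). The delicate point is not this formal chain but the bookkeeping of domains: I would establish rigorously the inclusion $(AB)^*(AB)\subset A^*AB^*B$ by tracking, for $x\in D((AB)^*(AB))$, that $B^*Bx$ indeed lands in $D(A^*A)$. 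Since both $(AB)^*(AB)$ (by von Neumann's theorem) and $A^*AB^*B$ (by Corollary \ref{corollary A*AB*B self-adjoint}) are self-adjoint, the maximality of self-adjoint operators then upgrades this inclusion to the equality $(AB)^*(AB)=A^*AB^*B$.

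Finally I would assemble the pieces. The identity $(AB)^*(AB)=(|A||B|)^2$, together with the fact that $|A||B|$ is positive and self-adjoint, shows by uniqueness of the positive square root that $|AB|=\sqrt{(AB)^*(AB)}=|A||B|$. Combining this with $|\overline{BA}|=|AB|$ (from $\overline{BA}=AB$) and with $|A||B|=\overline{|B||A|}$ from the first paragraph, I reach $|\overline{BA}|=|AB|=|A||B|=\overline{|B||A|}$, as required. The only genuine obstacle is the domain analysis hidden in the displayed computation; the rest is an orchestration of the earlier propositions and of Fuglede's theorem.
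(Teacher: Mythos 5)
Your proposal is correct, but it takes a genuinely different route from the paper for the two middle equalities. The paper never proves (nor needs) the operator identity $\overline{BA}=AB$: it treats $|AB|$ and $|\overline{BA}|$ separately, deriving the inclusions $(AB)^*AB\supset B^*BAA^*$ and $(BA)^*BA\supset B^*BA^*A$ by pure adjoint--commutation calculus (Fuglede giving $BA^*\subset A^*B$ and hence $B^*A\subset AB^*$), then reversing them by taking adjoints into $(AB)^*AB\subset A^*AB^*B$ and $(BA)^*\overline{BA}\subset A^*AB^*B$, and finishing with maximality of self-adjoint operators --- no spectral measures appear anywhere. You instead establish the strictly stronger structural fact $\overline{BA}=AB$, using that $B$ (and $B^*$) commutes with the spectral measure of the normal operator $A$ (a standard Rudin-type consequence of Theorem \ref{Fuglede Putnam classic}) together with the truncation $x_n=E(\{|\lambda|\le n\})x$; that argument is valid as written, and it buys you the exact adjoint formula $(AB)^*=A^*B^*$ and collapses $|\overline{BA}|=|AB|$ for free, at the cost of invoking the full spectral apparatus where the paper uses only algebraic manipulation. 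The one step you leave as a sketch --- the inclusion $(AB)^*(AB)\subset A^*AB^*B$ --- does go through routinely along the lines you indicate: if $x\in D((AB)^*AB)$, then $Bx\in D(A)$ and, since $(AB)^*=A^*B^*$, also $B^*ABx\in D(A^*)$; applying $B^*A\subset AB^*$ to $y=Bx$ yields $B^*Bx\in D(A)$ with $AB^*Bx=B^*ABx\in D(A^*)$, so $B^*Bx\in D(A^*A)$ and $(AB)^*ABx=A^*AB^*Bx$, after which maximality upgrades the inclusion to equality exactly as you say. Your endgame --- Proposition \ref{sqrt AB=sqrt A sqrt B PRO} applied to the pairs $A^*A$, $B^*B$ and $|A|$, $|B|$ (via Proposition \ref{abs A abs B abs B abs A PRO} and Corollary \ref{corollary A*AB*B self-adjoint}), plus uniqueness of the positive square root --- coincides with the paper's.
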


\begin{proof}The equality $|A||B|=\overline{|B||A|}$ is established as
follows: By Proposition \ref{abs A abs B abs B abs A PRO}, we have
$|B| |A|\subset |A| |B|$. Since $|B|$ and $|A|$ are self-adjoint, it
follows by Proposition \ref{sqrt AB=sqrt A sqrt B PRO} that
\[\overline{|B||A|}=|A||B|.\]

Now, by the general theory, $BA\subset AB$ gives $B^*A^*\subset
A^*B^*$. Also and as above, the normality of $A$ yields
\[BA\subset AB\Longrightarrow BA^*\subset A^*B.\]
Hence by passing to adjoints, $B^*A\subset AB^*$. Therefore,
\[(AB)^*AB\supset B^*A^*AB=B^*AA^*B\supset B^*ABA^*\supset B^*BAA^*.\]
The closedness of $AB$ gives the self-adjointness of $(AB)^*AB$. The
previous "inclusion" becomes after taking adjoints
\[(AB)^*AB\subset AA^*B^*B=A^*AB^*B.\]
Since $(AB)^*AB$ and $A^*AB^*B$ are self-adjoint, by the maximality
of self-adjoint operators, we get
\[(AB)^*AB=A^*AB^*B.\]
We infer by a glance at Proposition \ref{sqrt AB=sqrt A sqrt B PRO}
that
\[|AB|=\sqrt{(AB)^*AB}=\sqrt{A^*AB^*B}=\sqrt{A^*A}\sqrt{B^*B}=|A||B|.\]
To prove the last equality, we proceed as before. We have

\[(BA)^*BA=A^*B^*BA\supset B^*A^*BA\supset B^*BA^*A.\]
Since $BA$ is closeable and $B^*B\in B(H)$, it follows by passing to
adjoints that
\[(BA)^*\overline{BA}\subset [(BA)^*BA]^*\subset A^*AB^*B.\]
As above, we obtain
\[(BA)^*\overline{BA}=A^*AB^*B.\]
Accordingly,
\[|\overline{BA}|=\sqrt{(\overline{BA})^*\overline{BA}}=\sqrt{(BA)^*\overline{BA}}=\sqrt{A^*AB^*B}=|A||B|.\]
\end{proof}

\begin{rema}The condition $BA\subset AB$ may be obtained if $AB$ is normal, $A$ and $B$ are self-adjoint (one of them is
positive) and $B\in B(H)$ (see \cite{Mortad-PAMS2003},
\cite{Gustafson-Mortad-2016}, cf. \cite{Jung-Mortad-Stochel}).
\end{rema}

\begin{cor}\label{abs value corollary AB B*A}
Let $A$ be a normal operator and let $B\in B(H)$. If $BA\subset AB$,
then
\[|AB|=|A^*B|=|\overline{BA}|=|\overline{BA^*}|.\]
\end{cor}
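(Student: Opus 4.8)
The plan is to recognize that this corollary is nothing more than two applications of Theorem \ref{abs value AB MAIN THM}: one to the normal operator $A$ itself and one to its adjoint $A^*$. The theorem already supplies two of the four quantities outright, namely
\[|AB|=|A||B|=|\overline{BA}|,\]
so the whole task reduces to showing that $|A^*B|$ and $|\overline{BA^*}|$ likewise equal $|A||B|$.

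First I would verify that the hypotheses of the main theorem remain valid when $A$ is replaced by $A^*$. Since $A$ is normal, so is $A^*$, because the defining relation $AA^*=A^*A$ is symmetric in $A$ and $A^*$. The one substantive hypothesis to check is the commutation $BA^*\subset A^*B$; but this is exactly what the Fuglede(-Putnam) Theorem \ref{Fuglede Putnam classic} yields from $BA\subset AB$, and it is the very passage already invoked in the proof of Proposition \ref{abs A abs B abs B abs A PRO}. With these two points in hand, Theorem \ref{abs value AB MAIN THM} applied to $A^*$ (in the role of the normal operator) delivers
\[|\overline{BA^*}|=|A^*B|=|A^*||B|.\]

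The final step fuses the two applications through the normality identity $|A|=|A^*|$, recalled in the introduction for normal $A$. Substituting this into the previous display gives $|A^*B|=|A^*||B|=|A||B|=|AB|$, and identically $|\overline{BA^*}|=|A||B|$, so all four absolute values coincide.

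The single point deserving care — and hence the natural site of any obstacle — is the justification of $BA^*\subset A^*B$, which is where the normality of $A$ does the decisive work: it is precisely normality that activates Fuglede-Putnam and that simultaneously guarantees $|A|=|A^*|$. Once those are secured, the remainder is pure bookkeeping among the equalities furnished by the two instances of the main theorem.
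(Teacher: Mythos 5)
Your proof is correct and matches the paper's intent exactly: the paper states this corollary without proof as an immediate consequence of Theorem \ref{abs value AB MAIN THM}, and the intended argument is precisely yours --- apply the theorem to $A$ and then to the normal operator $A^*$ (with $BA^*\subset A^*B$ supplied by the Fuglede Theorem \ref{Fuglede Putnam classic}) and conclude via $|A^*|=|A|$. All the ingredients you cite are available in the paper, so there is nothing to add.
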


\begin{cor}
Let $A$ be a boundedly invertible normal operator. Then
\[|A^{-1}|=|A|^{-1}.\]
\end{cor}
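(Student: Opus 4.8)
The plan is to exploit the uniqueness of the positive self-adjoint square root, reducing the identity $|A^{-1}|=|A|^{-1}$ to an equality between the squares of two positive self-adjoint operators. First I would record the structural facts. Since $A$ is boundedly invertible, $A^{-1}\in B(H)$; moreover $A^*$ is boundedly invertible with $(A^*)^{-1}=(A^{-1})^*$, and $A^{-1}$ is again normal, because $(A^{-1})^*A^{-1}=(AA^*)^{-1}=(A^*A)^{-1}=A^{-1}(A^{-1})^*$ using $AA^*=A^*A$. I would also check that $|A|$ is boundedly invertible: combining $\|Ax\|=\||A|x\|$ on $D(A)=D(|A|)$ with the fact that bounded invertibility of $A$ yields a constant $m>0$ with $\|Ax\|\geq m\|x\|$, one sees that $|A|$ is self-adjoint and bounded below by $m$, hence boundedly invertible with $\||A|^{-1}\|\leq 1/m$. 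Consequently $|A|^{-1}$ is a bounded positive self-adjoint operator.

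Next comes the computation of the two squares. On the one hand,
\[
|A^{-1}|^2=(A^{-1})^*A^{-1}=(A^*)^{-1}A^{-1}=(AA^*)^{-1}=(A^*A)^{-1},
\]
where the normality of $A$ is used in the last step. On the other hand,
\[
(|A|^{-1})^2=(|A|^2)^{-1}=(A^*A)^{-1},
\]
since $|A|^2=A^*A$ and $|A|^{-1}$ commutes with itself. Thus $|A^{-1}|$ and $|A|^{-1}$ are both positive self-adjoint operators whose square equals the same positive self-adjoint operator $(A^*A)^{-1}$, and by the uniqueness of the positive square root they must coincide.

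The delicate part is that these inverse manipulations are carried out with the \emph{unbounded} operator $A^*A$, so I must be careful that equalities such as $(A^*)^{-1}A^{-1}=(A^*A)^{-1}$ hold as genuine operator equalities (with matching domains) rather than mere inclusions. I would justify them by verifying directly that the candidate bounded operator really inverts $A^*A$ on both sides, paying attention to $D(A^*A)$.

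Alternatively, and perhaps more cleanly, I would bypass the domain bookkeeping by appealing to Theorem \ref{abs value AB MAIN THM} with $B=A^{-1}\in B(H)$. Indeed $BA=A^{-1}A\subset I=AA^{-1}=AB$, so that theorem yields
\[
I=|AA^{-1}|=|A||A^{-1}|=\overline{|A^{-1}||A|}.
\]
Since $|A^{-1}|$ is bounded and $|A|$ is closed, $|A^{-1}||A|$ is already closed, so $\overline{|A^{-1}||A|}=|A^{-1}||A|=I$ on $D(|A|)$; together with $|A||A^{-1}|=I$ on all of $H$, this exhibits $|A^{-1}|$ as a two-sided bounded inverse of $|A|$, i.e. $|A^{-1}|=|A|^{-1}$, as claimed.
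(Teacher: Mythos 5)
Your first argument is correct and takes a genuinely different route from the paper. The paper's proof simply applies Theorem \ref{abs value AB MAIN THM} with $B=A^{-1}\in B(H)$ (noting $A^{-1}A\subset AA^{-1}=I$) to get $|A||A^{-1}|=|AA^{-1}|=I$, and then quotes the result of \cite{Dehimi-Mortad-2018} that a right-invertible self-adjoint operator is invertible, from which $|A^{-1}|=|A|^{-1}$ is immediate. You instead reduce the identity to the equality of squares
\[
|A^{-1}|^2=(A^*A)^{-1}=\bigl(|A|^{-1}\bigr)^2
\]
and invoke the uniqueness of the positive square root; since $A^{-1}\in B(H)$ and you verify that $|A|$ is bounded below (so $|A|^{-1}\in B(H)$), both candidates are \emph{bounded} positive self-adjoint operators and the bounded uniqueness theorem suffices. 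This buys independence from Theorem \ref{abs value AB MAIN THM} and from the right-invertibility lemma of \cite{Dehimi-Mortad-2018}, at the cost of the domain bookkeeping for $(A^*)^{-1}A^{-1}=(AA^*)^{-1}$, which you correctly flag and which does check out (one verifies directly that $(A^*)^{-1}A^{-1}$ inverts $AA^*$ on both sides, using $\ran(A^{-1})=D(A)$ and $\ran\bigl((A^*)^{-1}\bigr)=D(A^*)$). What the paper's route buys in exchange is brevity and uniformity with the machinery of its Section 2.

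Your second, ``cleaner'' alternative is essentially the paper's proof, but it contains a genuine error in its last step: the claim that $|A^{-1}||A|$ is closed because $|A^{-1}|$ is bounded and $|A|$ is closed. The standard closedness result goes the other way: $TS$ is closed when $T$ is closed and $S\in B(H)$, i.e.\ when the bounded factor sits on the \emph{right}; a bounded factor on the left does not preserve closedness. Indeed, here $|A^{-1}||A|$ is provably \emph{not} closed whenever $A$ is unbounded: its closure is $I$ by Theorem \ref{abs value AB MAIN THM}, so if it were closed its domain $D(|A|)$ would be all of $H$, forcing $A$ to be bounded. The step is easily repaired, however: from $|A^{-1}||A|\subset\overline{|A^{-1}||A|}=I$ you still get $|A^{-1}||A|x=x$ for every $x\in D(|A|)$, which together with $|A||A^{-1}|=I$ (an equality on all of $H$, since $\ran(A^{-1})=D(A)$ gives $D(AA^{-1})=H$) exhibits $|A^{-1}|$ as a two-sided inverse of $|A|$; alternatively, from $|A||A^{-1}|=I$ alone one concludes as the paper does via \cite{Dehimi-Mortad-2018}.
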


\begin{proof}
We may use the Spectral Theorem. Otherwise, by hypothesis,
\[A^{-1}A\subset AA^{-1}=I\]
where $A^{-1}\in B(H)$. Hence
\[|A||A^{-1}|=|AA^{-1}|=|I|=I.\]
Since $|A|$ is self-adjoint and right invertible, it is invertible
(see \cite{Dehimi-Mortad-2018}) and clearly
\[|A^{-1}|=|A|^{-1}.\]
\end{proof}

By calling on the Spectral Theorem for (unbounded) normal operators,
it is easy to see that $|A^n|=|A|^n$ for any $n\in\N$ and where $A$
is an unbounded normal operator. As a consequence of a recent result
due to J. Jab{\l}o\'{n}ski-Jung-Stochel in \cite{Jablonski et al
2014}, we may extend the above to quasinormal operators (a class for
which there is no Spectral Theorem). For readers convenience, we
recall that a closed densely defined operator $A$ is said to be
quasinormal if $U|A|\subset |A|U$ where $A=U|A|$ is the
(non-unitary) polar decomposition of $A$.

\begin{pro}(Lemma 3.5, \cite{Jablonski et al 2014})
If $A$ is unbounded and quasinormal, then for all $n\in\N$
\[(A^*A)^n=A^{*n}A^n=(A^n)^*A^n.\]
\end{pro}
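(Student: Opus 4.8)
The plan is to exploit the polar decomposition $A=U|A|$ directly, reducing every power of $A$ to a power of the self-adjoint factor $|A|$ dressed by powers of the partial isometry $U$. Throughout I will use that $A^*A=|A|^2$, so that $(A^*A)^n=|A|^{2n}$, and that $|A|$ is self-adjoint and positive.

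First I would record the commutation facts forced by quasinormality. By hypothesis $U|A|\subset|A|U$ with $U\in B(H)$ and $|A|$ self-adjoint; taking adjoints (and using that the bounded factor makes the adjoint of the product exact, exactly as in the proof of Proposition \ref{abs A abs B abs B abs A PRO}) yields $U^*|A|\subset|A|U^*$ as well. Thus both $U$ and $U^*$ leave $D(|A|)$ invariant and commute with $|A|$ there. Since $U\in B(H)$ commutes with the self-adjoint $|A|$, it commutes with the spectral measure of $|A|$, hence with the orthogonal projection $P:=U^*U$ onto $\overline{\ran|A|}$; a short induction then gives $U^{*k}U^k=P$ for every $k\in\N$. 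Moreover, because $|A|y\in\ran|A|\subseteq\overline{\ran|A|}$ for each $y\in D(|A|)$, we have $P|A|=|A|$ on $D(|A|)$.

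Next I would prove by induction that $A^n=U^n|A|^n$ with $D(A^n)=D(|A|^n)$. The inclusion $U^n|A|^n\subseteq A^n$ is the easy direction: for $x\in D(|A|^n)$ one has $|A|^k x\in D(|A|)$ for $k<n$, and the invariance of $D(|A|)$ under $U$ together with $U|A|=|A|U$ on $D(|A|)$ lets one slide all the factors $U$ to the left, giving $A^n x=U^n|A|^n x$. For the reverse domain inclusion $D(A^n)\subseteq D(|A|^n)$ I would use $U^*$: from $Ax=U|A|x\in D(A)=D(|A|)$ one applies $U^*$ and invokes $U^*|A|\subset|A|U^*$ together with $P|A|=|A|$ to recover $|A|x\in D(|A|)$, i.e. $x\in D(|A|^2)$; iterating this handles general $n$. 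This step, with its careful tracking of domains, is where I expect the main difficulty to lie, since for unbounded operators the identity $(ST)^*=T^*S^*$ and the freedom to interchange factors hold only under bounded/self-adjoint provisos that must be re-checked at each stage.

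Finally, with $A^n=U^n|A|^n$ in hand (and $U^n$ bounded), the adjoint is exact, $(A^n)^*=|A|^n U^{*n}$, whence
\[(A^n)^*A^n=|A|^nU^{*n}U^n|A|^n=|A|^nP|A|^n=|A|^{2n}=(A^*A)^n.\]
It then remains to reconcile $A^{*n}:=(A^*)^n$ with $(A^n)^*$ after composition with $A^n$. In general only $A^{*n}\subseteq(A^n)^*$ holds, so $A^{*n}A^n\subseteq(A^n)^*A^n=|A|^{2n}$; to upgrade this to equality I would verify the reverse domain inclusion, namely that for every $x\in D(|A|^{2n})$ the vector $A^nx=U^n|A|^nx$ actually lies in $D(A^{*n})$, again using $U^{*k}U^k=P$, $P|A|=|A|$ and the invariance of $D(|A|)$. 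This forces $D(A^{*n}A^n)=D(|A|^{2n})$ and $A^{*n}A^n=|A|^{2n}$, completing the chain $(A^*A)^n=A^{*n}A^n=(A^n)^*A^n$.
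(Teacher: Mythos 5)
Here there is nothing in the paper to compare against: the authors do not prove this proposition but import it verbatim as Lemma 3.5 of \cite{Jablonski et al 2014}, so yours is a proof where the paper supplies only a citation. Judged on its own merits, your argument is correct in all essentials, and the identity $A^n=U^n|A|^n$ with $D(A^n)=D(|A|^n)$ that it isolates is exactly the structural fact underlying the cited lemma. Your preliminary commutation facts check out: $U^*|A|\subset|A|U^*$ follows from $U|A|\subset|A|U$ by the same device as in Proposition \ref{abs A abs B abs B abs A PRO} (take adjoints, using that $(U|A|)^*=|A|U^*$ exactly because the bounded factor stands on the left, while $U^*|A|\subset(|A|U)^*$ always holds), and since $|A|$ is self-adjoint, $U|A|\subset|A|U$ does force $U$ to commute with the spectral measure $E$ of $|A|$. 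One point you should make explicit rather than wave at: the reason $U$ commutes with $P=U^*U$ is that $P$ is itself a spectral projection of $|A|$, namely $P=I-E(\{0\})$, since the initial space of $U$ is $\overline{\ran|A|}=(\ker|A|)^{\perp}$ and $\ker|A|=\ker A=E(\{0\})H$; with $UP=PU$ in hand your induction $U^{*k}U^k=P$ goes through. Similarly, the final computation uses $P|A|^nx=|A|^nx$, which needs the (easy but unstated) observation that $|A|^nx=|A|(|A|^{n-1}x)\in\ran|A|$ for $x\in D(|A|^n)$. The two domain inductions you flag as the main difficulty do work as described: invariance of $D(|A|)$ under $U$ and $U^*$ lets you slide factors in both directions, $U^{*n}A^nx=P|A|^nx=|A|^nx$ recovers $D(A^n)\subseteq D(|A|^n)$, and the same devices show $A^nx\in D(A^{*n})$ whenever $x\in D(|A|^{2n})$, yielding $A^{*n}A^n=(A^n)^*A^n=|A|^{2n}=(A^*A)^n$, the last equality by the functional calculus for the positive self-adjoint $|A|$. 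In short: a complete and essentially self-contained reconstruction of a result the paper only quotes, with two small justifications to be written out.
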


Hence, we have:

\begin{cor}
If $A$ is an unbounded quasinormal operator, then
\[|A^n|=|A|^n\]
for all $n\in\N$.
\end{cor}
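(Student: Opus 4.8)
The plan is to reduce everything to the displayed identity in the preceding Proposition (Lemma 3.5 of \cite{Jablonski et al 2014}) together with the functional calculus for the \emph{single} positive self-adjoint operator $|A|$. Since a quasinormal operator is closed and densely defined, $A^n$ is again quasinormal and hence closed (this is part of the theory developed in \cite{Jablonski et al 2014}); thus $(A^n)^*A^n$ is self-adjoint and positive by von-Neumann's theorem, and the modulus $|A^n|=\sqrt{(A^n)^*A^n}$ is legitimately defined.

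First I would recall that, by the very definition of the absolute value, $|A|^2=A^*A$, where $|A|$ is positive and self-adjoint. Applying the Spectral Theorem to the single operator $|A|$, the powers $|A|^k$ (for $k\in\N$) arise from the functional calculus $\lambda\mapsto\lambda^k$ on $\sigma(|A|)\subset[0,\infty)$, and they obey the expected scalar-type rules. In particular,
\[(A^*A)^n=(|A|^2)^n=|A|^{2n}.\]
This is the step requiring a little care, since products of unbounded operators are delicate in general; the point is that every operator occurring here is a function of the one operator $|A|$, so their composition mirrors the pointwise product of the corresponding scalar functions and no domain pathology can arise.

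Now I would simply combine this with the Proposition. Since that result gives $(A^n)^*A^n=(A^*A)^n$, we obtain
\[(A^n)^*A^n=(A^*A)^n=|A|^{2n}.\]
Taking the unique positive self-adjoint square root of both sides, and invoking the functional calculus once more (the positive square root of $|A|^{2n}$ corresponds to $\lambda\mapsto\sqrt{\lambda^{2n}}=\lambda^n$ on $[0,\infty)$), yields
\[|A^n|=\sqrt{(A^n)^*A^n}=\sqrt{|A|^{2n}}=|A|^n,\]
which is the desired equality. The main (and essentially only) obstacle is justifying that the formal manipulations of powers and square roots of $|A|$ remain valid in the unbounded setting; this is dispatched by observing that they all take place within the commutative functional calculus of the single self-adjoint operator $|A|$, where the uniqueness of the positive square root guarantees $\sqrt{|A|^{2n}}=|A|^n$.
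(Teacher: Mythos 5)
Your proof is correct and is essentially the argument the paper intends: the Corollary follows from the Proposition via $(A^n)^*A^n=(A^*A)^n=|A|^{2n}$ and the uniqueness of the positive self-adjoint square root, which is exactly the chain you wrote (the paper leaves this implicit with its ``Hence, we have''). Your care in noting that all manipulations occur within the functional calculus of the single operator $|A|$, so that $\sqrt{|A|^{2n}}=|A|^n$ holds without domain pathology, is precisely the right justification.
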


The previous corollary is not true for the weaker class of
hyponormal operators even bounded ones. Indeed, we have:

\begin{exa}
Let $S\in B(\ell^2)$ be the usual (unilateral) shift. Remember that
$SS^*\neq I$ and $S^*S=I$ where $I$ is the identity operator on
$\ell^2$. Now, take $A=S+I$ so that $A$ is hyponormal. Then
\[|A|^2=|S+I|^2=(S^*+I)(S+I)=2I+S+S^*.\]
On the other hand,
\[|A^2|=\sqrt{(S^*+I)^2(S+I)^2}=\sqrt{6I+4S^*+4S+S^{*2}+S^2}.\]
If $|A^2|=|A|^2$ held, then we would obtain $|A^2|^2=|A|^4$. Working
out details would then yield $SS^*=I$ which is impossible.
\end{exa}

Before passing to inequalities, we generalize Theorem \ref{abs value
AB MAIN THM} to a finite family of normal operators. First, we have:

\begin{pro}\label{pppp}
Let $A$ and $B$ be two strongly commuting normal operators. Then
$\overline{AB}=\overline{BA}$ are normal and
\[|\overline{BA}|=|\overline{AB}|=\overline{|A||B|}=\overline{|B||A|}.\]
\end{pro}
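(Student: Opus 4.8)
The plan is to reduce every assertion to the calculus of multiplication operators via the joint spectral theorem for strongly commuting normal operators. Since $A$ and $B$ are normal and strongly commute, their spectral measures commute and hence admit a joint spectral measure; equivalently, by the multiplication form of the Spectral Theorem (see \cite{SCHMUDG-book-2012}), there are a measure space $(\Omega,\mu)$, a unitary $U:H\to L^2(\Omega,\mu)$, and measurable functions $f,g:\Omega\to\mathbb{C}$ such that $UAU^{-1}=M_f$ and $UBU^{-1}=M_g$, where $M_\varphi$ is multiplication by $\varphi$ on its natural (maximal) domain. Because unitary equivalence preserves closures, adjoints, products and the positive square root, it suffices to prove each identity for $M_f$ and $M_g$, i.e. to turn each operator relation into a pointwise identity between symbols.

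Next I would treat the product. A direct computation gives $D(M_fM_g)=\{h:gh\in L^2 \text{ and } fgh\in L^2\}\subset D(M_{fg})$, which is dense, so $M_fM_g$ is densely defined and closable. A truncation argument shows this domain is a core for $M_{fg}$: for $h\in D(M_{fg})$ set $h_n=h\,\mathbf{1}_{\{|g|\le n\}}$, note $h_n\in D(M_fM_g)$, and apply dominated convergence (dominated by $|h|^2$ and $|fgh|^2$) to get $h_n\to h$ in the graph norm of $M_{fg}$. Hence $\overline{M_fM_g}=M_{fg}$, and symmetrically $\overline{M_gM_f}=M_{gf}=M_{fg}$ since $fg=gf$ pointwise. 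This yields $\overline{AB}=\overline{BA}$, and normality is immediate because every multiplication operator is normal: $M_{fg}^*=M_{\overline{fg}}$ and $M_{fg}^*M_{fg}=M_{|fg|^2}=M_{fg}M_{fg}^*$.

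The same bookkeeping delivers the absolute-value identities. Since $A^*A=\overline{M_{\overline f}M_f}=M_{|f|^2}$ and $|f|\ge 0$, the uniqueness of the positive square root gives $|A|=\sqrt{M_{|f|^2}}=M_{|f|}$, and likewise $|B|=M_{|g|}$. Applying the product/core step to the nonnegative symbols $|f|,|g|$ yields $\overline{|A||B|}=M_{|f||g|}=M_{|g||f|}=\overline{|B||A|}$. On the other hand $|\overline{AB}|=|M_{fg}|=\sqrt{M_{fg}^*M_{fg}}=\sqrt{M_{|fg|^2}}=M_{|fg|}=M_{|f||g|}$, the last equality being the pointwise fact $|fg|=|f|\,|g|$. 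Comparing the two computations gives $|\overline{AB}|=|\overline{BA}|=\overline{|A||B|}=\overline{|B||A|}$, as required.

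I expect the only genuine obstacle to be the domain/core step, namely verifying that the naive product domain $D(M_fM_g)$ is a core for $M_{fg}$, so that $\overline{AB}$ is exactly the maximal multiplication operator and not a proper restriction of it. This is precisely the point where the present situation is harder than Theorem \ref{abs value AB MAIN THM}: there one of the factors is bounded and the domain arithmetic is transparent, whereas here $AB$ is a product of two genuinely unbounded operators with no bounded factor to lean on. Once the multiplication model and this core property are in place, everything else is a routine pointwise manipulation of the symbols.
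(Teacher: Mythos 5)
Your proof is correct, and it rests on the same engine as the paper's --- the joint spectral theorem for the strongly commuting pair --- but in a different realization, with the key lemma proved rather than quoted. The paper's proof stays abstract: it forms the product spectral measure $E_{A,B}(z,z')=E_A(z)F_B(z')$, defines $C=\iint zz'\,dE_{A,B}$, and invokes the properties of the two-parameter functional calculus (citing Page 78 of \cite{SCHMUDG-book-2012}) to conclude both that $C=\overline{AB}=\overline{BA}$ is normal and that $|\overline{AB}|=\iint|zz'|\,dE_{A,B}=\overline{|A||B|}$; in particular the crucial closure identity $\overline{AB}=\iint zz'\,dE_{A,B}$ is cited, not derived. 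You instead pass to the multiplication model $A\simeq M_f$, $B\simeq M_g$ and establish the corresponding fact $\overline{M_fM_g}=M_{fg}$ by hand, via the truncations $h_n=h\,\mathbf{1}_{\{|g|\le n\}}$ and dominated convergence --- this is precisely the content of the functional-calculus multiplicativity that the paper leans on, and your core argument is sound (the pointwise convergence $h_n\to h$ uses $|g|<\infty$ a.e., which holds for a multiplication symbol). The remaining symbol computations $|M_f|=M_{|f|}$, $|M_{fg}|=M_{|fg|}=M_{|f||g|}$ are routine and correct; incidentally $M_{\overline{f}}M_f=M_{|f|^2}$ holds even without taking a closure, since $h,|f|^2h\in L^2$ already forces $fh\in L^2$. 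What your route buys is self-containedness at the one genuinely nontrivial step, which you correctly identified; what it costs is the input that a strongly commuting pair is \emph{simultaneously} unitarily equivalent to multiplication operators, a standard but citation-worthy fact which in general requires decomposing $H$ into an orthogonal sum of subspaces cyclic for the joint spectral measure, so that $\Omega$ is a disjoint union of copies of $\C^2$ (with $\mu$ $\sigma$-finite when $H$ is separable); stating this explicitly would close the only loose end in your write-up.
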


\begin{proof}
The proof makes use of the Spectral Theorem. Since $A$ and $B$ are
normal, by the Spectral Theorem we may write
\[A=\int_{\C}zdE_A \text{ and } B=\int_{\C}z'dF_B,\]
where $E_A$ and $F_B$ designate the associated spectral measures. By
the strong commutativity, we have
\[E_A(I)F_B(J)=F_B(J)E_A(I)\]
for all Borel sets $I$ and $J$ in $\C$. Hence
\[E_{A,B}(z,z')=E_A(z)F_B(z')\]
defines a two parameter spectral measure. Thus
\[C=\int_{\C}\int_{\C}zz'dE_{A,B}\]
defines a normal operator, such that
$C=\overline{AB}=\overline{BA}$. Therefore, as $|zz'|=|z||z'|$ for
all $z,z'$, then (see e.g. Page 78 of \cite{SCHMUDG-book-2012})
\[|\overline{AB}|=\int_{\R}\int_{\R}|zz'|dE_{A,B}=\overline{|A||B|},\]
as wished.
\end{proof}

\begin{cor}
Let $A$ and $B$ be two strongly commuting normal operators. Then
\[|\overline{AB}|=|\overline{A^*B}|=|\overline{AB^*}|=|\overline{A^*B^*}|=|\overline{BA}|=|\overline{BA^*}|=|\overline{B^*A}|=|\overline{B^*A^*}|.\]
\end{cor}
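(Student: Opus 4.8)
The plan is to derive everything from Proposition \ref{pppp} together with two elementary facts: that for a normal operator $N$ one has $|N|=|N^*|$, and that the adjoints of strongly commuting normal operators again strongly commute. Since $A$ and $B$ are assumed to strongly commute, I expect each of the eight products appearing in the statement to fall under the hypotheses of Proposition \ref{pppp}, and each of the eight absolute values to collapse to the single quantity $\overline{|A||B|}$.

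First I would record that strong commutativity is stable under passage to adjoints. If $E_A$ and $F_B$ denote the spectral measures of $A$ and $B$, then the spectral measure of $A^*$ is the reflected measure $\Delta\mapsto E_A(\overline{\Delta})$, and likewise for $B^*$. Since $E_A(I)F_B(J)=F_B(J)E_A(I)$ for all Borel sets $I,J$, the same commutation persists after replacing $E_A$ by its reflection, or $F_B$ by its reflection, or both. Consequently each of the four pairs $(A,B)$, $(A^*,B)$, $(A,B^*)$, $(A^*,B^*)$ is a pair of strongly commuting normal operators, and, by the same symmetry of the relation, the order within each pair is immaterial.

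With this in hand, the conclusion follows by applying Proposition \ref{pppp} to each pair. For the pair $(A^*,B)$, for instance, the proposition gives $|\overline{A^*B}|=\overline{|A^*||B|}$, and since $A$ is normal we have $|A^*|=|A|$, whence $|\overline{A^*B}|=\overline{|A||B|}$. Running the same argument over the remaining pairs, and invoking $|B^*|=|B|$ where needed, shows that each of the eight expressions equals $\overline{|A||B|}$; the order-reversal equalities such as $|\overline{BA}|=|\overline{AB}|$ are already supplied by Proposition \ref{pppp}.

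The only point requiring genuine care is the first step, the stability of strong commutativity under taking adjoints, and I expect this to be the main (though mild) obstacle. It rests solely on the description of the spectral measure of $N^*$ in terms of that of $N$, after which commutation of the measures is immediate. Everything that follows is a direct substitution into Proposition \ref{pppp} combined with the identity $|N|=|N^*|$ for normal $N$.
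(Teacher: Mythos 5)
Your proof is correct and takes essentially the same route as the paper, which likewise deduces all eight equalities from Proposition \ref{pppp} combined with $|A^*|=|A|$ and $|B^*|=|B|$. Your only addition is the explicit (and legitimate) verification that strong commutativity is preserved under adjoints, via the reflected spectral measure $\Delta\mapsto E_A(\overline{\Delta})$, a point the paper leaves implicit in its one-line proof.
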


\begin{proof}
The proof follows easily from Proposition \ref{pppp} and from
$|A^*|=|A|$ and $|B^*|=|B|$.
\end{proof}

Using the polar decomposition of (unbounded) normal operators, the
following result then becomes obvious.

\begin{cor}
Let $A$ and $B$ be two strongly commuting normal operators. Then
\[\overline{BA}=\overline{AB}=U\overline{|A||B|}=U\overline{|B||A|}=\overline{|B||A|}U=\overline{|A||B|}U\]
for some unitary operator $U\in B(H)$.
\end{cor}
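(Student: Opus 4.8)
The plan is to deduce everything from Proposition~\ref{pppp} together with the polar decomposition of a normal operator. First I would set $C:=\overline{AB}=\overline{BA}$. By Proposition~\ref{pppp} this operator is normal, and its modulus is already identified there as $|C|=\overline{|A||B|}=\overline{|B||A|}$. Hence the six operators occurring on the right of the asserted chain are, up to one unitary factor on one side or the other, all just $|C|$, and the whole statement reduces to producing a single unitary $U\in B(H)$ such that $C=U|C|=|C|U$.

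Next I would invoke the polar decomposition of the unbounded normal operator $C$. The essential point is that, for a normal operator, the partial isometry occurring in the polar decomposition may be taken to be unitary and, in addition, to commute with $|C|$. Concretely, writing $C=\int_{\C}z\,dE_C$ by the Spectral Theorem and setting $U=\int_{\C}u(z)\,dE_C$, where $u(z)=z/|z|$ for $z\neq 0$ and $u(z)=1$ for $z=0$, one checks that $U\in B(H)$ is unitary (as $|u(z)|=1$ everywhere) and that, both $U$ and $|C|=\int_{\C}|z|\,dE_C$ being functions of the same spectral measure, they commute in the strong sense. Since $z=u(z)|z|=|z|u(z)$ pointwise, a direct domain computation (using that $U$ preserves $D(|C|)=D(C)$) gives the genuine operator equalities $C=U|C|=|C|U$.

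Finally I would simply substitute $|C|=\overline{|A||B|}=\overline{|B||A|}$ into $C=U|C|=|C|U$ and read off
\[\overline{BA}=\overline{AB}=U\overline{|A||B|}=U\overline{|B||A|}=\overline{|B||A|}U=\overline{|A||B|}U,\]
which is precisely the claimed chain of equalities.

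As for the main obstacle: there is essentially no analytic difficulty left, since Proposition~\ref{pppp} has already done the substantial work of identifying $C$ as a bona fide normal operator with the stated modulus. The only point demanding some care is the justification that the polar decomposition of an \emph{unbounded} normal operator yields a genuinely unitary factor (not merely a partial isometry) commuting with $|C|$; this is exactly where the normality of $C$—rather than just its closedness—is used, and it is settled cleanly through the functional calculus as sketched above. Everything else is formal substitution, which is why the author can reasonably call the result ``obvious'' once the polar decomposition is in hand.
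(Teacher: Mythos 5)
Your proposal is correct and follows exactly the route the paper intends: the paper offers no written proof, merely remarking that the result ``becomes obvious'' from the polar decomposition of unbounded normal operators once Proposition~\ref{pppp} has identified $\overline{AB}=\overline{BA}$ as normal with modulus $\overline{|A||B|}=\overline{|B||A|}$. Your construction of the unitary $U=\int u(z)\,dE_C$ with $u(z)=z/|z|$ (and $u(0)=1$), commuting with $|C|$ and satisfying $C=U|C|=|C|U$, is precisely the standard argument being invoked, with the domain checks carried out properly.
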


Before carrying on, we give the following probably known result on
the important notion of the adjoint of products:

\begin{pro}\label{(AC)*=....Normal PRO}
Let $A$ and $B$ be two strongly commuting normal operators. Then
\[(AB)^*=(\overline{AB})^*=\overline{B^*A^*}=\overline{A^*B^*}=(BA)^*.\]
\end{pro}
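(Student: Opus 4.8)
The plan is to combine the always-valid inclusion of adjoints of products with the maximality of normal operators. First I would record the standard fact that, for a densely defined closable operator $T$, one has $T^*=(\overline{T})^*$ (since $\overline{T}=T^{**}$ and $T^{***}=T^*$). Applied to $T=AB$ and to $T=BA$ this yields $(AB)^*=(\overline{AB})^*$ and $(BA)^*=(\overline{BA})^*$. By Proposition \ref{pppp} we already know $\overline{AB}=\overline{BA}=:C$ is normal, so $(AB)^*=(BA)^*=C^*$, and $C^*$ is itself normal. This already collapses three of the five members of the chain; the remaining task is to identify $C^*$ with $\overline{B^*A^*}$ and $\overline{A^*B^*}$.

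Next I would invoke the general inclusion $B^*A^*\subset(AB)^*$, valid whenever $AB$ is densely defined. Since every adjoint is closed, $(AB)^*$ is closed, and therefore $\overline{B^*A^*}\subset(AB)^*=C^*$. The crux is to promote this inclusion to an equality, and here I would use the maximality of normal operators: if $N\subset M$ with both $N$ and $M$ normal, then $N=M$. This follows because $N\subset M$ forces $M^*\subset N^*$, whence the domain equalities $D(N)=D(N^*)$ and $D(M)=D(M^*)$ coming from normality squeeze $D(M)=D(M^*)\subset D(N^*)=D(N)\subset D(M)$, so $D(N)=D(M)$ and $N=M$.

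To apply maximality I must verify that $\overline{B^*A^*}$ is normal. Since $A$ and $B$ strongly commute, their spectral measures commute; the spectral measures of $A^*$ and $B^*$ are obtained from those of $A$ and $B$ by the reflection $z\mapsto \bar z$ and hence also commute, so $A^*$ and $B^*$ are themselves strongly commuting normal operators. Proposition \ref{pppp} applied to the pair $(A^*,B^*)$ then gives that $\overline{A^*B^*}=\overline{B^*A^*}$ is normal. With $\overline{B^*A^*}\subset C^*$ and both sides normal, maximality forces $\overline{B^*A^*}=C^*=(AB)^*$, and the already-noted equality $\overline{A^*B^*}=\overline{B^*A^*}$ completes the full chain.

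The main obstacle I anticipate is exactly this passage from inclusion to equality: the general theory delivers only $B^*A^*\subset(AB)^*$, never equality, so the argument hinges on having \emph{normality on both sides} and then invoking maximality. The supporting subtlety is confirming that $A^*$ and $B^*$ strongly commute, which I would settle via the reflection-invariance of the commuting spectral measures. Alternatively, one could bypass maximality entirely and read everything off the two-parameter spectral measure $E_{A,B}=E_A\cdot F_B$ of Proposition \ref{pppp}: there $C=\int\!\!\int zz'\,dE_{A,B}$, so $C^*=\int\!\!\int \bar z\,\bar z'\,dE_{A,B}$, which is precisely the operator $\overline{A^*B^*}=\overline{B^*A^*}$ obtained from the same spectral measure.
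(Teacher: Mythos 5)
Your proposal is correct, but your main argument is genuinely different from the paper's. The paper disposes of this proposition in one line: it stays inside the proof of Proposition \ref{pppp}, where $\overline{AB}=\overline{BA}=C=\iint zz'\,dE_{A,B}$ was built from the joint spectral measure, and simply conjugates the integrand, $C^*=\iint \overline{z}\,\overline{z'}\,dE_{A,B}=\overline{A^*B^*}=\overline{B^*A^*}$ --- which is precisely the ``alternative'' you sketch in your last sentence. Your primary route instead combines the general inclusion $B^*A^*\subset (AB)^*$ with maximality of normal operators ($N\subset M$ with both normal forces $N=M$), and every step checks out: the reduction $(AB)^*=(\overline{AB})^*$ via $T^{***}=T^*$ is sound; your verification that $A^*$ and $B^*$ strongly commute (the spectral measures transform under $z\mapsto\overline{z}$, so commutativity is preserved) is the one genuinely new ingredient you need before re-applying Proposition \ref{pppp}, and you supply it; and your proof of maximal normality via $D(M)=D(M^*)\subset D(N^*)=D(N)\subset D(M)$ is the standard correct argument. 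What each approach buys: the paper's is shorter because the joint spectral measure is already on the table, and it gets the commutativity of $A^*$ and $B^*$ for free by working within the single measure $E_{A,B}$; yours isolates a reusable abstract principle --- maximality --- which is exactly the device the paper itself uses repeatedly for self-adjoint operators (e.g.\ in Proposition \ref{sqrt AB=sqrt A sqrt B PRO} and Theorem \ref{abs value AB MAIN THM}), so your argument is stylistically consonant with the rest of the paper and would adapt to situations where only one of the two products is known a priori to come from a functional calculus.
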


\begin{proof}
The proof is contained in the proof of Proposition \ref{pppp} by
remembering that
\[A^*=\int_{\C}\overline{z}dE_A \text{ and } B^*=\int_{\C}\overline{z'}dF_B.\]
\end{proof}

\begin{pro}\label{ABC MODULUS PRO}
Let $A$ and $C$ be strongly commuting normal operators and let $B\in
B(H)$. If $BA\subset AB$, $BC\subset CB$ and $AC$ is densely
defined, then
\[|\overline{ABC}|=|\overline{AC}||B|=\overline{|A||C|}|B|.\]
\end{pro}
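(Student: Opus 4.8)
The plan is to reduce the three-operator statement to the two-operator results already in hand, namely Proposition \ref{pppp} (for the strongly commuting normal pair $A,C$) and Theorem \ref{abs value AB MAIN THM} (for a normal operator together with a bounded operator commuting with it). The strategy is to first bundle $A$ and $C$ into a single normal operator, then treat $B$ as the bounded factor commuting with that normal operator.

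First I would set $N=\overline{AC}=\overline{CA}$. By Proposition \ref{pppp}, the strong commutativity of the normal operators $A$ and $C$ guarantees that $N$ is normal and that
\[
|\overline{AC}|=|N|=\overline{|A||C|}.
\]
This already disposes of the second equality in the claim, $|\overline{AC}||B|=\overline{|A||C|}|B|$, so the real content is the first equality $|\overline{ABC}|=|N||B|$. To apply Theorem \ref{abs value AB MAIN THM} with $N$ in the role of the normal operator, I need the commutation hypothesis $BN\subset NB$. The hypotheses supply $BA\subset AB$ and $BC\subset CB$; from these I would derive $BAC\subset ACB$ on the appropriate natural domains, and then pass to closures using the fact (recalled in the introduction) that $S\subset T$ with $T$ closable implies $\overline{S}\subset\overline{T}$, together with the closedness properties of products with a bounded operator. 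Since $B\in B(H)$, one has $BN=B\overline{AC}\subset \overline{BAC}\subset\overline{ACB}=\overline{AC}\,B=NB$, giving the required $BN\subset NB$.

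With $BN\subset NB$ established for the normal operator $N$ and $B\in B(H)$, Theorem \ref{abs value AB MAIN THM} applies directly and yields
\[
|\overline{ABC}|=|\overline{NB}|=|NB|=|N||B|=\overline{|B||N|}.
\]
Combining this with $|N|=\overline{|A||C|}$ from the first step gives $|\overline{ABC}|=|\overline{AC}||B|=\overline{|A||C|}|B|$, as desired. One should also check that $\overline{ABC}$ coincides with $\overline{NB}$ (equivalently with $\overline{(AC)B}$), which follows because $B$ is bounded and everywhere defined, so that the closure of $ACB$ agrees with $\overline{AC}\,B=NB$.

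The main obstacle I anticipate is purely domain-theoretic: verifying carefully that $BN\subset NB$ by tracking the natural domains through the operations $BA\subset AB$, $BC\subset CB$, and the passage to the closure $\overline{AC}$. The inclusions themselves are formal, but the density hypothesis on $AC$ and the interplay between $D(ABC)$, $D(N)=D(\overline{AC})$ and $D(NB)$ must be handled with the bounded-factor identities $\overline{SB}=\overline{S}\,B$ and $T B$ closed when $T$ is closed and $B\in B(H)$, both recalled in the introduction. Once the commutation $BN\subset NB$ is secured, everything else is an immediate invocation of the two cited results.
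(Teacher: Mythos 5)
Your reduction to the pair $(N,B)$ with $N=\overline{AC}$ is the right idea and is in fact the same strategy the paper follows, and your derivation of $BN\subset NB$ is sound: $B(AC)\subset ABC\subset (AC)B$ follows from the two commutation hypotheses, and the passage to $B\overline{AC}\subset\overline{AC}B$ is exactly the remark quoted from Tian's thesis (or your closure chain, which is valid since $BAC\subset\overline{AC}B$ makes $BAC$ closable). The gap is the step $\overline{ABC}=\overline{AC}\,B$. You justify it by a ``bounded-factor identity'' $\overline{SB}=\overline{S}\,B$, but no such identity is recalled in the paper --- the introduction only gives that $\overline{S}B$ is closed, which yields the inclusion $\overline{SB}\subset\overline{S}\,B$ --- and the identity is false in general: if $e\in D(\overline{S})\setminus D(S)$ with $\overline{S}e\neq 0$ and $B$ is the orthogonal projection onto $\C e$, then $D(SB)=\{e\}^{\perp}$, so $\overline{SB}$ is the zero operator on $\{e\}^{\perp}$, while $\overline{S}B$ is everywhere defined and nonzero. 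Moreover, even granting that identity you would still need $\overline{ABC}=\overline{ACB}$, whereas only $ABC\subset ACB$ is available ($D(ACB)$ may be strictly larger, since $x\in D(ACB)$ need not lie in $D(C)$). So all your argument secures is $\overline{ABC}\subset NB$, and an inclusion of closed operators does not transfer moduli: the absolute value of a proper closed restriction can differ from that of the extension, so $|\overline{ABC}|=|NB|$ does not follow.

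This is precisely the difficulty the paper's proof is designed to sidestep: it never claims $\overline{ABC}=\overline{AC}\,B$, but pushes the inclusion up to the positive products, obtaining $(ABC)^*\overline{ABC}\subset (AC)^*\overline{AC}B^*B$ from $(ABC)^*\subset (AC)^*B^*$ together with the Fuglede-type Lemma \ref{fuglede-type closure normal LEM}; it then observes that both sides are self-adjoint --- the left by von Neumann's theorem applied to the closed operator $\overline{ABC}$, the right by Corollary \ref{corollary A*AB*B self-adjoint} --- so that maximality of self-adjoint operators upgrades the inclusion to the equality $(ABC)^*\overline{ABC}=(AC)^*\overline{AC}B^*B$, and square roots via Proposition \ref{sqrt AB=sqrt A sqrt B PRO} give $|\overline{ABC}|=|\overline{AC}||B|=\overline{|A||C|}|B|$. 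Your proposal can be repaired by inserting exactly this maximality step in place of the claimed closure identity; incidentally, the identity $\overline{ABC}=NB$ then does hold \emph{a posteriori}, since $\overline{ABC}\subset NB$ and the equality of moduli forces $D(\overline{ABC})=D(|\overline{ABC}|)=D(|NB|)=D(NB)$ --- but assuming it at the outset is circular, and as stated it is the missing, and in general false, ingredient.
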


The proof of the preceding proposition relies upon a Fuglede-like
result.

\begin{lem}\label{fuglede-type closure normal LEM}
Let $B\in B(H)$ and let $T$ be a densely defined operator such that
$\overline{T}$ is normal. If $BT\subset TB$, then
\[BT^*\subset T^*B\text{, } B\overline{T}\subset \overline{T}B \text{ and } B^*\overline{T}\subset \overline{T}B^*.\]
\end{lem}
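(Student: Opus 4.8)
The plan is to prove the three inclusions in the order $B\overline{T}\subset\overline{T}B$, then $BT^*\subset T^*B$, and finally $B^*\overline{T}\subset\overline{T}B^*$, using the first as the stepping stone for the other two. First I would upgrade the given commutation $BT\subset TB$ to the closure. Since $B\in B(H)$ is everywhere defined, $D(B\overline{T})=D(\overline{T})$, and the hypothesis reads as follows: for every $x\in D(T)$ one has $Bx\in D(T)$ and $TBx=BTx$. Now given $y\in D(\overline{T})$, I would pick $(y_n)\subset D(T)$ with $y_n\to y$ and $Ty_n\to\overline{T}y$. Then $By_n\in D(T)$ with $T(By_n)=B(Ty_n)$, and the continuity of $B$ gives $By_n\to By$ and $T(By_n)\to B\overline{T}y$. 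By the very definition of the closure, this forces $By\in D(\overline{T})$ and $\overline{T}(By)=B\overline{T}y$, that is, $B\overline{T}\subset\overline{T}B$. This also shows $D(\overline{T})\subset D(\overline{T}B)$, so $\overline{T}B$ is densely defined and its adjoint makes sense.

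Next I would bring in normality. Because $T$ is closable, $\overline{T}^*=T^*$, so it suffices to produce the desired relation with $\overline{T}^*$ in place of $T^*$. Applying the Fuglede--Putnam theorem (Theorem \ref{Fuglede Putnam classic}) with the single normal operator $N=M=\overline{T}$ and the bounded operator $B$, the inclusion $B\overline{T}\subset\overline{T}B$ immediately yields $B\overline{T}^*\subset\overline{T}^*B$, i.e. $BT^*\subset T^*B$. This settles the first claim.

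For the last inclusion I would take adjoints of $B\overline{T}\subset\overline{T}B$. Since passing to adjoints reverses inclusions, $(\overline{T}B)^*\subset(B\overline{T})^*$; here the bounded factor $B$ sits on the left of $B\overline{T}$, so $(B\overline{T})^*=\overline{T}^*B^*=T^*B^*$ holds with genuine equality, whereas the product rule only gives $B^*\overline{T}^*\subset(\overline{T}B)^*$. Chaining these yields $B^*T^*\subset T^*B^*$. As $T^*=\overline{T}^*$ is again normal, a second application of Fuglede--Putnam (now with bounded operator $B^*$ and normal operator $T^*$) turns $B^*T^*\subset T^*B^*$ into $B^*(T^*)^*\subset(T^*)^*B^*$, that is, $B^*\overline{T}\subset\overline{T}B^*$, as desired. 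The main point to watch is the domain bookkeeping together with the inclusion-versus-equality distinction in the adjoint-of-a-product formulas: the equality $(SR)^*=R^*S^*$ is available only when the bounded operator occupies the correct slot, and one must keep track of which products close up and which merely embed. Once these are handled carefully, everything else reduces to a routine continuity argument for the closure and two invocations of Fuglede--Putnam.
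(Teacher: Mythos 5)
Your proof is correct, but it is organized differently from the paper's. The paper runs a single four-step chain starting from the adjoint of the hypothesis: $BT\subset TB$ gives $B^*T^*\subset T^*B^*$ (using $(BT)^*=T^*B^*$ since $B\in B(H)$), then the Fuglede theorem applied to the normal operator $T^*=\overline{T}^*$ gives $B^*\overline{T}\subset \overline{T}B^*$, taking adjoints again gives $BT^*\subset T^*B$, and a second application of Fuglede gives $B\overline{T}\subset \overline{T}B$ as the \emph{last} step. You instead establish $B\overline{T}\subset\overline{T}B$ \emph{first}, by an elementary sequential continuity argument using only the closability of $T$ and the boundedness of $B$ --- this is precisely Lemma 2.8 of Tian's thesis, which the authors themselves quote in the remark immediately following the lemma but do not use in their proof --- and then obtain the other two inclusions by one Fuglede application each, with the same adjoint bookkeeping ($(B\overline{T})^*=\overline{T}^*B^*$ exactly, $B^*\overline{T}^*\subset(\overline{T}B)^*$ only as an inclusion) in between. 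Your version isolates cleanly where normality is actually needed: the closure inclusion holds for any closable $T$, while Fuglede enters only for the adjoint-commutation statements; the paper's version is a more compact pure adjoint-and-Fuglede computation that never invokes a limiting argument. Your attention to the density of $D(\overline{T}B)$ before forming its adjoint, a point the paper leaves implicit, is a genuine plus.
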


\begin{proof}
Since $\overline{T}$ is normal, $\overline{T}^*=T^*$ stays normal.
Now,
\begin{align*}
BT\subset TB\Longrightarrow& B^*T^*\subset T^*B^* \text{ (by taking adjoints)}\\
\Longrightarrow &B^*\overline{T}\subset \overline{T}B^* \text{ (use the Fuglede Theorem)}\\
\Longrightarrow& BT^*\subset T^*B \text{ (by taking adjoints)}\\
\Longrightarrow& B\overline{T}\subset \overline{T}B \text{ (apply
the Fuglede Theorem again).}
\end{align*}
This marks the end of the proof.
\end{proof}

\begin{rema}
It is worth noting that (see Lemma 2.8 of \cite{Tian-Jorgensen PhD
THESIS}): \textit{If $B\in B(H)$ and $T$ is densely defined such
that $\overline{T}$ exists, then
\[BT\subset TB\Longleftrightarrow B\overline{T}\subset \overline{T}B.\]}
\end{rema}

Now, we give a proof of Proposition \ref{ABC MODULUS PRO}.

\begin{proof}
By assumptions,
\[B(AC)\subset ABC\subset (AC)B.\]
Since $AC$ is densely defined and $D(AC)=D(BAC)\subset D(ABC)$,
clearly $ABC$ is densely defined. Hence
\[(ABC)^*\subset [B(AC)]^*=(AC)^*B^*.\]
As $\overline{AC}$ is normal, then Lemma \ref{fuglede-type closure
normal LEM} gives
\[B(AC)^*\subset (AC)^*B \text{ or } B^*\overline{AC}\subset \overline{AC}B^*.\]
Also,
\[ABC\subset ACB\subset \overline{AC}B\]
and so
\[\overline{ABC}\subset\overline{AC}B\]
for $\overline{AC}B$ is closed. Hence, we may write
\[(ABC)^*\overline{ABC}\subset (AC)^*B^*\overline{ABC}\subset (AC)^*B^*\overline{AC}B\subset (AC)^*\overline{AC}B^*B.\]
Corollary \ref{corollary A*AB*B self-adjoint} gives the
self-adjointness of $(AC)^*\overline{AC}B^*B$ and because
$\overline{ABC}$ is closed, we then get
\[(ABC)^*\overline{ABC}=(AC)^*\overline{AC}B^*B.\]
Ergo, Proposition \ref{sqrt AB=sqrt A sqrt B PRO} (and Corollary
\ref{corollary A*AB*B self-adjoint} again) yields
\[|\overline{ABC}|=|\overline{AC}||B|=\overline{|A||C|}|B|,\]
establishing the result.
\end{proof}

\begin{cor}
Let $A$ be a normal operator and $B\in B(H)$. If $BA\subset AB$,
then
\[|\overline{ABA}|=|A|^2|B|=|A|\overline{|B||A|}.\]
\end{cor}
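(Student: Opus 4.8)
The plan is to recognize this corollary as the special case $C=A$ of Proposition \ref{ABC MODULUS PRO}, and then to rewrite the resulting closures in self-adjoint form. First I would verify that all the hypotheses of that proposition hold with this choice. A normal operator strongly commutes with itself (its spectral measure trivially commutes with itself), so $A$ and $C=A$ are two strongly commuting normal operators. The single assumption $BA\subset AB$ simultaneously furnishes both required commutation relations $BA\subset AB$ and $BC\subset CB$. The only remaining hypothesis is that $AC=A^2$ be densely defined; this I would extract from the Spectral Theorem for normal operators, since the spectral subspaces $E(\{|z|\le n\})H$ lie in $D(A^k)$ for every $k$ and their union is dense, whence $D(A^2)$ is dense.

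With the hypotheses in place, I would apply Proposition \ref{ABC MODULUS PRO} directly to obtain
\[|\overline{ABA}|=|\overline{AA}||B|=\overline{|A||A|}|B|.\]
The next task is to put the two operator factors $|\overline{AA}|$ and $\overline{|A||A|}$ into the common closed form $|A|^2$. For the first, since $A$ is normal the Spectral Theorem gives $|A^n|=|A|^n$; taking $n=2$ (and reading $|A^2|$ as $|\overline{A^2}|$, as elsewhere in the paper) yields $|\overline{AA}|=|A|^2$. For the second, I would note that $|A|$ is self-adjoint, so $|A|^2=|A|^*|A|$ is self-adjoint and positive by von Neumann's theorem, hence closed, which forces $\overline{|A||A|}=|A|^2$. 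Both substitutions then deliver the first asserted equality $|\overline{ABA}|=|A|^2|B|$.

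For the remaining equality $|A|^2|B|=|A|\overline{|B||A|}$, I would invoke Theorem \ref{abs value AB MAIN THM}, which (as $A$ is normal and $BA\subset AB$) gives $\overline{|B||A|}=|A||B|$. Substituting, $|A|\overline{|B||A|}=|A|(|A||B|)$, and a short check of the natural domains—using that $|B|\in B(H)$ is everywhere defined—shows that $|A|(|A||B|)$ and $|A|^2|B|$ share the same domain and the same action, so they coincide.

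The main obstacle here is not any real computation but the bookkeeping around closures and self-adjointness. One must justify that $A^2$ is densely defined so that Proposition \ref{ABC MODULUS PRO} even applies, and, more delicately, one must be sure that $\overline{|A||A|}$ is exactly the self-adjoint operator $|A|^2$ rather than some proper restriction of it; this is precisely the point at which the self-adjointness of $|A|$ (and the consequent closedness of $|A|^2$) does the work.
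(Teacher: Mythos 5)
Your proposal is correct and is essentially the paper's own route: the paper states this corollary without proof precisely as the special case $C=A$ of Proposition \ref{ABC MODULUS PRO}, with the same clean-up steps you supply, namely $|\overline{A^2}|=|A|^2$ (from $|A^n|=|A|^n$ for normal $A$, noted in the paper), $\overline{|A||A|}=|A|^2$ by self-adjointness of $|A|^2$, and $\overline{|B||A|}=|A||B|$ from Theorem \ref{abs value AB MAIN THM}. Your attention to the dense definedness of $A^2$ and to the closure bookkeeping fills in exactly the details the paper leaves implicit.
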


\begin{cor}
Let $(A_i)_{i=1,\cdots,n}$ be pairwise strongly commuting normal
operators. Then
\[|\overline{A_1A_2\cdots A_n}|=\overline{|A_1||A_2|\cdots|A_n|}.\]
\end{cor}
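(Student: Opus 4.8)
The plan is to argue by induction on $n$, the base case $n=2$ being precisely Proposition \ref{pppp}. I would carry the inductive hypothesis in the strengthened form that $\overline{A_1\cdots A_k}$ is not merely equal to $\overline{|A_1|\cdots|A_k|}$ in modulus, but is itself normal; this normality clause is exactly what permits Proposition \ref{pppp} to be reapplied at the next stage. The base case already furnishes both halves of this strengthened statement, since Proposition \ref{pppp} asserts that $\overline{A_1A_2}$ is normal and that $|\overline{A_1A_2}|=\overline{|A_1||A_2|}$.

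For the inductive step, set $N=\overline{A_1\cdots A_{n-1}}$. The inductive hypothesis supplies that $N$ is normal with $|N|=\overline{|A_1|\cdots|A_{n-1}|}$. The first genuine task is to show that $N$ and $A_n$ strongly commute. Here I would return to the spectral-measure mechanism used in the proof of Proposition \ref{pppp}: as the $A_i$ pairwise strongly commute, their spectral measures pairwise commute, so the spectral measure of $N$ is the pushforward of the joint measure of $A_1,\ldots,A_{n-1}$ under the map $(z_1,\ldots,z_{n-1})\mapsto z_1\cdots z_{n-1}$. Since the spectral measure of $A_n$ commutes with every $E_{A_i}$, it commutes with this pushforward, which is exactly the strong commutativity of $N$ and $A_n$.

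Once strong commutativity is in hand, Proposition \ref{pppp} applied to the pair $(N,A_n)$ yields that $\overline{NA_n}$ is normal and that $|\overline{NA_n}|=\overline{|N||A_n|}$. It then remains to identify the relevant closures, namely $\overline{NA_n}=\overline{A_1\cdots A_n}$ and $\overline{|N||A_n|}=\overline{\,\overline{|A_1|\cdots|A_{n-1}|}\,|A_n|}=\overline{|A_1|\cdots|A_n|}$. Both are associativity-of-closure identities, and I would verify them through the joint spectral measure, where the product $z_1\cdots z_n$ may be grouped either way, rather than by formal manipulation of inclusions between unbounded operators.

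The main obstacle is precisely this: neither the strong commutativity of $N=\overline{A_1\cdots A_{n-1}}$ with $A_n$ nor the closure-associativity identities is a formal consequence of inclusions, and both force a descent to the spectral calculus. For this reason the corollary in fact admits a cleaner direct proof that sidesteps the induction entirely: build the $n$-parameter spectral measure $E$ on $\C^n$ from the pairwise commuting $E_{A_i}$, write $\overline{A_1\cdots A_n}=\int_{\C^n}z_1\cdots z_n\,dE$, and read off $|\overline{A_1\cdots A_n}|=\int_{\C^n}|z_1|\cdots|z_n|\,dE=\overline{|A_1|\cdots|A_n|}$ from the scalar identity $|z_1\cdots z_n|=|z_1|\cdots|z_n|$. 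I would present the induction for readability while relying on this joint-measure construction to discharge its delicate steps.
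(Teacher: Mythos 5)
Your proposal is correct and matches the paper's approach: the paper's entire proof reads ``It follows by induction using $n$ parameter spectral measures,'' which is precisely your combination of an induction on $n$ with the $n$-parameter joint spectral measure used to discharge the delicate steps (strong commutativity of $\overline{A_1\cdots A_{n-1}}$ with $A_n$ and the closure identities). Your write-up simply makes explicit the details that the paper leaves implicit in that one line.
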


\begin{proof}
It follows by induction using $n$ parameter spectral measures.
\end{proof}

We finish with an ultimate generalization:

\begin{pro}
Let $(A_i)_{i=1,\cdots,n}$ be pairwise strongly commuting normal
operators such that $A_1A_2\cdots A_n$ is densely defined. Let $B\in
B(H)$ be such that $BA_i\subset A_iB$ for $i=1,2,\cdots,n$. Then
\[|\overline{BA_1A_2\cdots A_n}|=|\overline{A_1A_2\cdots A_n}B|=|\overline{A_1A_2\cdots A_i BA_{i+1}\cdots A_n}|=\overline{|A_1||A_2|\cdots|A_n|}|B|.\]
\end{pro}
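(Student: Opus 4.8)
The plan is to combine the two-operator product result (Theorem \ref{abs value AB MAIN THM} and Proposition \ref{ABC MODULUS PRO}) with the $n$-fold strong-commutativity machinery already developed, proceeding by induction on $n$. First I would set $N=\overline{A_1A_2\cdots A_n}$, which by the corollary preceding this proposition is normal with $|N|=\overline{|A_1||A_2|\cdots|A_n|}$, and I would verify that $B$ commutes with this single normal operator. Concretely, since $BA_i\subset A_iB$ for each $i$, I would first push the bounded $B$ through the product step by step to get $B(A_1\cdots A_n)\subset (A_1\cdots A_n)B$, and then invoke Lemma \ref{fuglede-type closure normal LEM} (whose hypothesis is exactly ``$\overline{T}$ normal and $BT\subset TB$'') with $T=A_1\cdots A_n$ to obtain $BN\subset NB$.

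Once $B$ is known to commute with the normal operator $N$, the first and last equalities in the statement become instances of Theorem \ref{abs value AB MAIN THM} applied to the pair $(N,B)$: that theorem gives
\[
|\overline{BN}|=|NB|=|N||B|=\overline{|B||N|},
\]
and substituting $N=\overline{A_1\cdots A_n}$ together with $|N|=\overline{|A_1|\cdots|A_n|}$ yields
\[
|\overline{BA_1A_2\cdots A_n}|=|\overline{A_1A_2\cdots A_n}B|=\overline{|A_1||A_2|\cdots|A_n|}\,|B|.
\]
Here I must be slightly careful about the closures: I would use the Tian--Jorgensen remark, namely $BT\subset TB\Leftrightarrow B\overline{T}\subset \overline{T}B$, to freely replace the product $A_1\cdots A_n$ by its closure $N$ inside the absolute-value expressions, and to identify $\overline{BN}$ with $\overline{BA_1\cdots A_n}$.

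The remaining (and I expect principal) difficulty is the middle cluster of equalities, where $B$ is inserted in an arbitrary interior position,
\[
|\overline{A_1A_2\cdots A_i\,BA_{i+1}\cdots A_n}|=\overline{|A_1||A_2|\cdots|A_n|}\,|B|.
\]
My plan here is to set $C_1=\overline{A_1\cdots A_i}$ and $C_2=\overline{A_{i+1}\cdots A_n}$, both normal by the preceding corollary, and to check that they strongly commute (this follows from pairwise strong commutativity of the $A_j$ via their joint spectral measure) while $B$ commutes with each, so that $BC_1\subset C_1B$ and $BC_2\subset C_2B$. Then I would apply Proposition \ref{ABC MODULUS PRO} to the triple $(C_1,B,C_2)$ --- rewriting $A_1\cdots A_i\,B\,A_{i+1}\cdots A_n$ as $C_1BC_2$ up to closures --- to obtain
\[
|\overline{C_1BC_2}|=\overline{|C_1||C_2|}\,|B|=\overline{|A_1|\cdots|A_n|}\,|B|.
\]
The delicate point throughout is purely domain-theoretic bookkeeping: one must repeatedly justify that passing between the finite product and its closure is legitimate inside $|\cdot|$, that each intermediate product is densely defined (which is where the hypothesis that $A_1\cdots A_n$ is densely defined enters, together with the inclusions $D(A_1\cdots A_n)\subseteq D(A_1\cdots A_iBA_{i+1}\cdots A_n)$ coming from $BA_j\subset A_jB$), and that the self-adjoint operators produced at each stage are genuinely equal rather than merely one extending the other --- this last equality being secured, as in the earlier proofs, by the maximality of self-adjoint operators.
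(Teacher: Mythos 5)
Your proposal is correct and follows essentially the same route as the paper: the paper likewise derives the outer equalities from Theorem \ref{abs value AB MAIN THM} applied to $B$ and the normal closure of $A_1A_2\cdots A_n$ (using $|\overline{A_1\cdots A_n}|=\overline{|A_1|\cdots|A_n|}$), and for the interior insertion of $B$ it splits the product as $(A_1\cdots A_i)(A_{i+1}\cdots A_n)$ and invokes Proposition \ref{ABC MODULUS PRO}, exactly as you do with $C_1$ and $C_2$. If anything, your version is more explicit than the paper's very terse proof about the closure and dense-definedness bookkeeping.
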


\begin{proof}These equalities are simple consequences of the
results obtained above. For example, to prove the last equality,
observe that $A_1A_2\cdots A_i$ and $A_{i+1}\cdots A_n$ are normal.
Then
\[|\overline{A_1A_2\cdots A_i BA_{i+1}\cdots A_n}|=|\overline{(A_1A_2\cdots A_i)(A_{i+1}\cdots A_n)}||B|=\overline{|A_1||A_2|\cdots|A_n|}|B|,\]
as required.
\end{proof}

\section{The Absolute Value, Sums and Inequalities}

\begin{thm}\label{thm triangle inequality MAIN}
Let $A$ be a normal operator and let $B\in B(H)$ be hyponormal. If
$BA\subset AB$, then
\[|A+B|\leq |A|+|B|.\]
\end{thm}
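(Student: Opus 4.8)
The plan is to reduce the stated operator inequality to a quadratic-form estimate and then to invoke the hyponormal absolute-value inequality of Theorem \ref{Dehimi-Mortad-HYPONORMAL-ABS-VALUE-THM}. First, observe that $|A|+|B|$ is positive and self-adjoint: it is positive as a sum of two positive operators, and self-adjoint because $|B|\in B(H)$ is self-adjoint (so that $(|A|+|B|)^*=|A|+|B|$). Moreover $A+B$ is closed (since $A$ is closed and $B\in B(H)$), whence $|A+B|$ is the positive self-adjoint square root of $(A+B)^*(A+B)$ and $D(|A+B|)=D(A+B)=D(A)=D(|A|+|B|)$. As $\|(A+B)x\|=\||A+B|x\|$ on $D(A)$, proving $\|(A+B)x\|\le\|(|A|+|B|)x\|$ for every $x\in D(A)$ amounts precisely to $(|A|+|B|)^2\ge |A+B|^2$ in the order recalled above; the Heinz inequality (operator monotonicity of the square root) then upgrades this to $|A|+|B|\ge|A+B|$, which is the assertion.

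Next I would expand the target norm inequality. Using $\|Ax\|=\||A|x\|$ (valid since $A$ is closed), $\|Bx\|=\||B|x\|$, and the identity $\langle |A|x,|B|x\rangle=\langle |A||B|x,x\rangle$ — which is real and nonnegative because $|B||A|\subset|A||B|$ by Proposition \ref{abs A abs B abs B abs A PRO} and $\overline{|B||A|}=|A||B|$ is positive self-adjoint — one finds $\|(|A|+|B|)x\|^2-\|(A+B)x\|^2=2\left(\langle |A||B|x,x\rangle-\REAL\langle Ax,Bx\rangle\right)$. Thus everything reduces to the single estimate $\REAL\langle Ax,Bx\rangle\le\langle |A||B|x,x\rangle$ for all $x\in D(A)$.

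To establish this, I would bring in Theorem \ref{Dehimi-Mortad-HYPONORMAL-ABS-VALUE-THM}. By the Fuglede Theorem \ref{Fuglede Putnam classic}, $BA\subset AB$ yields $BA^*\subset A^*B$ and, on taking adjoints, $B^*A\subset AB^*$. Set $T=\overline{BA^*}$. On the core $D(A)$ one has $Tx=BA^*x=A^*Bx$ and $\langle A^*Bx,x\rangle=\overline{\langle Ax,Bx\rangle}$, so $\REAL\langle Tx,x\rangle=\REAL\langle Ax,Bx\rangle$. The key claim is that $T$ is hyponormal. Since $B\in B(H)$ one has $T^*=(BA^*)^*=AB^*$, and for $x\in D(A)$ — using $B^*Ax=AB^*x$, $BA^*x=A^*Bx$, $ABx=BAx$, the normality identity $\|A^*y\|=\|Ay\|$ applied to $y=Bx$, and finally the hyponormality of $B$ applied to $y=Ax$ — one computes
\[\|T^*x\|=\|AB^*x\|=\|B^*Ax\|\le\|BAx\|=\|ABx\|=\|A^*Bx\|=\|Tx\|.\]
A routine closedness and weak-compactness argument then propagates both the inclusion $D(T)\subseteq D(T^*)$ and the inequality $\|T^*x\|\le\|Tx\|$ from the core $D(A)$ to all of $D(T)$, so $T$ is a closed hyponormal operator. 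Theorem \ref{Dehimi-Mortad-HYPONORMAL-ABS-VALUE-THM} now gives $\REAL\langle Tx,x\rangle\le|\langle Tx,x\rangle|\le\langle |T|x,x\rangle$, while Corollary \ref{abs value corollary AB B*A} together with Theorem \ref{abs value AB MAIN THM} identifies $|T|=|\overline{BA^*}|=|A||B|$. Combining these delivers $\REAL\langle Ax,Bx\rangle\le\langle |A||B|x,x\rangle$, as needed.

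I expect the main obstacle to be the hyponormality of $T=\overline{BA^*}$: correctly computing its adjoint as $AB^*$ and, above all, passing the norm inequality and the domain inclusion from the convenient core $D(A)$ to the whole domain of the closure. Once that is in place, the remaining steps are bookkeeping with the commutation relations furnished by Fuglede's theorem and the normality of $A$.
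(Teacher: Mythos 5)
Your proposal is correct and follows essentially the same route as the paper: reduce via the Heinz inequality to $|A+B|^2\leq(|A|+|B|)^2$, expand the norms on $D(A)$, and control the cross term by combining the hyponormality of the product (your $T=\overline{BA^*}$ coincides with the paper's $A^*B$, a fact the paper itself establishes) with Theorem \ref{Dehimi-Mortad-HYPONORMAL-ABS-VALUE-THM} and the identification $|T|=|A||B|$ from Theorem \ref{abs value AB MAIN THM}. If anything, your explicit core-to-closure propagation of $\|T^*x\|\leq\|Tx\|$ by weak compactness is slightly more careful than the paper's corresponding step in Theorem \ref{A^*B hyponormal THM}, which verifies the inequality only on $D(A)$.
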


The proof requires the following result which is also interesting in
its own.

\begin{thm}\label{A^*B hyponormal THM}
Let $A$ be a normal operator and let $B\in B(H)$ be hyponormal. If
$BA\subset AB$, then $A^*B$ is hyponormal, that is,
\[\|(A^*B)^*x\|\leq \|A^*Bx\|\]
for all $x\in D(A^*B)\subset D[(A^*B)^*]$.
\end{thm}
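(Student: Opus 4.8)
The plan is to establish hyponormality of $A^*B$ directly from the definition: I must show that $D(A^*B)\subset D[(A^*B)^*]$ and that $\|(A^*B)^*x\|\leq \|A^*Bx\|$ for all $x$ in the domain. The natural strategy is to get good control on $(A^*B)^*$ by relating it to $B^*A$, and then to reduce the norm inequality to facts I already know, namely that $B$ is hyponormal (so $\|B^*x\|\leq\|Bx\|$) and that $A$ is normal (so $|A|=|A^*|$, equivalently $\|Ax\|=\|A^*x\|$ on the appropriate domain).

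First I would exploit the commutation hypotheses. From $BA\subset AB$ and the normality of $A$, the Fuglede machinery of Lemma \ref{fuglede-type closure normal LEM} (here with $T=A$, already closed) yields the full package
\[
BA^*\subset A^*B,\quad B^*A\subset AB^*,\quad B^*A^*\subset A^*B^*,
\]
obtained by alternately taking adjoints and applying Fuglede. These inclusions are what let me move $B$ and $B^*$ past $A$ and $A^*$ whenever I need to rearrange products. I would keep careful track of domains, since these are genuine inclusions, not equalities, and $A,A^*$ are unbounded.

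Next I would identify $(A^*B)^*$. Since $B\in B(H)$, the product rule gives the equality $(A^*B)^*=B^*(A^*)^*=B^*\overline{A}=B^*A$ (using that $A$ is closed and $A^{**}=A$); this also confirms $A^*B$ is densely defined and closed, so its adjoint behaves well. Thus proving hyponormality amounts to showing $\|B^*Ax\|\leq\|A^*Bx\|$ for $x\in D(A^*B)$. Now I would estimate the left side by pushing $B^*$ through: using $B^*A\subset AB^*$ I can write $B^*Ax=AB^*x$, so $\|B^*Ax\|=\|AB^*x\|=\|A^*B^*x\|$ by normality of $A$. For the right side, the normality of $A$ gives $\|A^*Bx\|=\|ABx\|$ (again $|A|=|A^*|$), and since $BA^*\subset A^*B$ one has flexibility to rewrite $A^*Bx$; the goal is to arrive at a comparison of the form $\|A^*B^*x\|\leq\|A^*Bx\|$, which should follow from the hyponormality of $B$ once $A^*$ (or $A$) is commuted outside so that the inequality $\|B^*y\|\leq\|By\|$ can be applied to $y=$ (a vector obtained by acting with $A$).

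The main obstacle I anticipate is the bookkeeping of domains in these unbounded manipulations: each rewriting such as $B^*Ax=AB^*x$ is valid only as an inclusion, so I must verify that $x\in D(A^*B)$ actually lands in every intermediate domain (e.g.\ that $Bx, B^*x\in D(A)=D(A^*)=D(|A|)$) before invoking normality of $A$ or hyponormality of $B$. I expect the cleanest route is to translate everything into the absolute-value norm identity $\|A^*y\|=\||A|y\|=\|Ay\|$ valid on $D(A)$, combine it with $\|B^*x\|\leq\|Bx\|$, and use the commutation inclusions solely to guarantee the vectors sit in the right domains; the inequality $\|(A^*B)^*x\|\le\|A^*Bx\|$ should then drop out. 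Throughout I would lean on the fact, recalled in the excerpt, that for a normal (hence closed) operator $|A|=|A^*|$ and $\|Tx\|=\||T|x\|$, which is exactly the tool that converts the hypotheses on $A$ and $B$ into the desired norm bound.
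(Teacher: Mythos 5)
Your proposal breaks down at the adjoint identification, which is the crux of the statement. The equality $(A^*B)^*=B^*A$ is false: the product rule $(ST)^*=T^*S^*$ requires the \emph{left} factor $S$ to lie in $B(H)$, and here the bounded factor $B$ sits on the right, so one only gets the inclusion $B^*A=B^*A^{**}\subset (A^*B)^*$ (this is exactly how the rule is stated in the paper's introduction). Equality can genuinely fail under the very hypotheses of the theorem: take $A$ unbounded self-adjoint and $B=(I+A^2)^{-1}$, so that $BA\subset AB$, $A$ is normal and $B$ is positive (hence hyponormal); then $\ran(B)=D(A^2)\subset D(A)$ makes $A^*B$ a bounded everywhere defined operator, whence $D[(A^*B)^*]=H$, while $B^*A$ has domain $D(A)\neq H$. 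With only the inclusion $B^*A\subset (A^*B)^*$ available, your reduction of hyponormality to ``$\|B^*Ax\|\leq\|A^*Bx\|$ for $x\in D(A^*B)$'' collapses: for $x\in D(A^*B)\setminus D(A)$ the vector $B^*Ax$ is not even defined, and the domain inclusion $D(A^*B)\subset D[(A^*B)^*]$ --- which is part of what must be proved --- is left without justification. This is precisely where the paper invokes its modulus machinery, which your sketch does not replace: by Corollary \ref{abs value corollary AB B*A}, $|A^*B|=|\overline{BA^*}|$, and since $A^*B$ is closed, $D(A^*B)=D(|A^*B|)=D(\overline{BA^*})$; combined with $\overline{B^*A}\subset (A^*B)^*$ this yields $D(A^*B)\subset D[(A^*B)^*]$.

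There is a second, smaller wrong turn in the norm estimate. Your plan --- rewrite $\|B^*Ax\|=\|AB^*x\|=\|A^*B^*x\|$ and then obtain $\|A^*B^*x\|\leq\|A^*Bx\|$ from the hyponormality of $B$ --- fails even on $D(A)$: the inequality $\|B^*y\|\leq\|By\|$ cannot be composed with the unbounded $A^*$ on the left (there is no monotonicity principle letting you insert $A^*$ inside the inequality). The correct move, which is the paper's one-line computation, is to apply hyponormality \emph{before} commuting anything: for $x\in D(A)=D(A^*)=D(BA^*)\subset D(A^*B)$ one has $\|(A^*B)^*x\|=\|B^*Ax\|\leq\|B(Ax)\|=\|ABx\|=\|A^*Bx\|$, using $BA\subset AB$ and $\|Ay\|=\|A^*y\|$ on $D(A)$. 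Your commutation package ($BA^*\subset A^*B$, $B^*A\subset AB^*$, $B^*A^*\subset A^*B^*$) is correct and matches the paper's use of the Fuglede theorem, but as written your argument establishes neither the domain inclusion nor the inequality.
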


\begin{proof}
Since $BA\subset AB$, by the Fuglede Theorem $BA^*\subset A^*B$
because $A$ is normal. By Corollary \ref{abs value corollary AB B*A}
we have
\[|A^*B|=|\overline{BA^*}|.\]
Since $A^*B$ is closed (by the general theory), we obtain
\[D(A^*B)=D(|A^*B|)=D(|\overline{BA^*}|)=D(\overline{BA^*}).\]
On the other hand, as $A^*B$ is densely defined, we always have
\[B^*A^{**}=B^*A\subset(A^*B)^*.\]
Hence
\[\overline{B^*A}\subset(A^*B)^*\]
and so $D(A^*B)=D(\overline{B^*A})\subset D[(A^*B)^*]$.

Now, for all $x\in D(A)=D(A^*)=D(BA^*)\subset D(A^*B)\subset
D[(A^*B)^*]$, we have (using the hyponormality of $B$)
\[\|(A^*B)^*x\|=\|B^*Ax\|\leq \|BAx\|=\|ABx\|=\|A^*Bx\|,\]
proving the hyponormality of $A^*B$, as wished.
\end{proof}

\begin{cor}
Let $A$ be a normal operator and let $B\in B(H)$ be hyponormal. If
$BA\subset AB$, then $AB$ is hyponormal.
\end{cor}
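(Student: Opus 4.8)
The plan is to deduce this directly from Theorem \ref{A^*B hyponormal THM} by exploiting the symmetric roles that $A$ and $A^*$ play under the normality hypothesis. The key observation is that $A^*$ is again a normal operator, so Theorem \ref{A^*B hyponormal THM} may be applied verbatim with $A^*$ substituted for $A$, provided the commutation hypothesis in the form $BA^*\subset A^*B$ can be secured.

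First I would verify precisely this commutation relation. Starting from the assumption $BA\subset AB$ and using that $A$ is normal, the Fuglede Theorem (Theorem \ref{Fuglede Putnam classic}) immediately yields $BA^*\subset A^*B$. Thus the triple $(A^*,B)$ satisfies exactly the hypotheses of Theorem \ref{A^*B hyponormal THM}: $A^*$ is normal, $B\in B(H)$ is hyponormal, and $B$ commutes with $A^*$ in the required sense.

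Next I would invoke Theorem \ref{A^*B hyponormal THM} with $A^*$ in place of $A$. Its conclusion reads that $(A^*)^*B$ is hyponormal. Since $A$ is normal it is in particular closed, so $A^{**}=A$, whence $(A^*)^*B=AB$. This delivers exactly the hyponormality of $AB$, namely $D(AB)\subset D[(AB)^*]$ together with $\|(AB)^*x\|\leq\|ABx\|$ for all $x\in D(AB)$.

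Because the entire argument is a one-line specialization of the theorem just proved, I do not anticipate any genuine difficulty. The only points requiring a word of care are the transfer of the commutation hypothesis to $A^*$ through Fuglede and the identification $A^{**}=A$ via the closedness of the normal operator $A$; both are immediate consequences of normality, so no substantive obstacle arises.
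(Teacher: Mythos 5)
Your proof is correct and follows essentially the same route as the paper: the paper's own argument likewise passes from $BA\subset AB$ to $BA^*\subset A^*B$ via the Fuglede Theorem and then applies Theorem~\ref{A^*B hyponormal THM} to $A^*$, concluding that $A^{**}B=AB$ is hyponormal. Your added remarks on why $A^{**}=A$ (closedness of the normal operator $A$) simply make explicit a step the paper leaves implicit.
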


\begin{proof}
Clearly, $BA\subset AB$ gives $BA^*\subset A^*B$. Now the foregoing
theorem yields the hyponormality of $A^{**}B=AB$.
\end{proof}

We are ready to prove Theorem \ref{thm triangle inequality MAIN}:

\begin{proof}First, as usual we have $BA\subset AB$ and $BA^*\subset
A^*B$. Whence, if $x\in D(A)=D(A^*)$, then $Bx\in D(A)=D(A^*)$.
Moreover, the self-adjointness of $|A|$ and $|B|$ gives the
self-adjointness (and positiveness) of $|A|+|B|$ given that $|B|\in
B(H)$. Since $A$ is closed and $B\in B(H)$, $A+B$ is closed. Hence
$|A+B|$ makes sense and besides
\[D(|A|+|B|)=D(|A|)=D(A)=D(A+B)=D(|A+B|).\]
Now, since $|A+B|$ is self-adjoint and positive, to show the
required triangle inequality, by calling on the Heinz Inequality
(unbounded version), it suffices to show that
\[|A+B|^2\leq (|A|+|B|)^2.\]

Let $x\in D(A)$. Then
\begin{align*}
\||A+B|x\|^2&=\|(A+B)x\|^2 \\
&=<(A+B)x,(A+B)x>\\
&=\|Ax\|^2+\|Bx\|^2+<Bx,Ax>+<Ax,Bx>\\
&=\|Ax\|^2+\|Bx\|^2+<Bx,Ax>+\overline{<Bx,Ax>}\\
&=\|Ax\|^2+\|Bx\|^2+2\REAL<Bx,Ax>\\
&=\|Ax\|^2+\|Bx\|^2+2\REAL<A^*Bx,x>\\
&\leq \|Ax\|^2+\|Bx\|^2+2|<A^*Bx,x>|.
\end{align*}
But $A^*B$ is closed. It is also hyponormal by Theorem \ref{A^*B
hyponormal THM} and so Theorem
\ref{Dehimi-Mortad-HYPONORMAL-ABS-VALUE-THM} yields
\[|<A^*Bx,x>|\leq <|A^*B|x,x>\]
for each $x$. By Proposition \ref{abs A abs B abs B abs A PRO} and
Theorem \ref{abs value AB MAIN THM}, we have for each $x\in D(A)$:
\[|A^*B|x=|A^*||B|x=|A||B|x=|B||A|x.\]
Hence
\[2|<A^*Bx,x>|\leq 2<|A^*B|x,x>=<|A||B|x,x>+<|B||A|x,x>.\]
Accordingly,
\[\||A+B|x\|^2\leq \|Ax\|^2+\|Bx\|^2+<|A||B|x,x>+<|B||A|x,x>=\|(|A|+|B|)x\|^2\]
or merely
\[\||A+B|x\|\leq \|(|A|+|B|)x\|\]
for all $x\in D(A)$. By Definition \ref{defn pos unbd schmud}, this
just means that
\[|A+B|^2\leq (|A|+|B|)^2,\]
as needed above.
\end{proof}

\begin{cor}
Let $A$ be a normal operator and let $B\in B(H)$ be hyponormal. If
$BA\subset AB$, then
\[|A-B|\leq |A|+|B|.\]
\end{cor}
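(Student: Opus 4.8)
The plan is to deduce this corollary directly from Theorem \ref{thm triangle inequality MAIN} by substituting $-B$ in place of $B$. First I would check that the hypotheses of the theorem survive this substitution. Since $B\in B(H)$, clearly $-B\in B(H)$ as well. The hyponormality is preserved because $(-B)^*=-B^*$, so for every $x$ one has $\|(-B)^*x\|=\|B^*x\|\leq\|Bx\|=\|(-B)x\|$, and the domain inclusion $D(-B)=D(B)\subset D(B^*)=D((-B)^*)$ is unchanged. Thus $-B$ is a bounded hyponormal operator.

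Next I would verify the commutation relation. From $BA\subset AB$ one gets $-(BA)\subset -(AB)$, which is precisely $(-B)A\subset A(-B)$, so the hypothesis $(-B)A\subset A(-B)$ holds. Applying Theorem \ref{thm triangle inequality MAIN} to the pair $(A,-B)$ then yields
\[
|A+(-B)|\leq |A|+|-B|,
\]
that is, $|A-B|\leq |A|+|-B|$.

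It remains to identify $|-B|$ with $|B|$. This is immediate from the definition of the absolute value, since $(-B)^*(-B)=B^*B$, whence $|-B|=\sqrt{(-B)^*(-B)}=\sqrt{B^*B}=|B|$. Combining this with the displayed inequality gives $|A-B|\leq |A|+|B|$, as claimed. I do not expect any genuine obstacle here: the entire argument is a routine reduction to the already-established triangle inequality, and the only points requiring a line of justification are the stability of hyponormality under negation and the elementary identity $|-B|=|B|$.
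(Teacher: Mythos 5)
Your proof is correct and is precisely the argument the paper intends: the corollary is stated without proof because it follows from Theorem \ref{thm triangle inequality MAIN} by the routine substitution $B\mapsto -B$, exactly as you carry out. Your verifications that $-B$ remains bounded and hyponormal, that $(-B)A\subset A(-B)$, and that $|-B|=|B|$ are all accurate and complete the reduction.
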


\begin{rema}
The proof of the previous theorem in the bounded case (as it
appeared in \cite{Mortad-abs-value-BOUNDED}) used the known fact
that
\[|\REAL T|\leq |T|...(1)\]
whenever $T\in B(H)$ is hyponormal. So, the proof above is a new
proof of Theorem 3.4 in the above reference. The natural question is
whether we can expect Inequality $(1)$ to hold for unbounded closed
hyponormal operators? The answer is no in general, however, we have
the following related and interesting result:
\end{rema}

\begin{pro}\label{PP}
Let $A$ be a closed hyponormal operator. Then
\[\left|\overline{\REAL A}\right|\leq |A|\]
where $\overline{\REAL A}$ denotes the closure of $\REAL A$.
\end{pro}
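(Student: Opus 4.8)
The plan is to prove the inequality not directly at the level of the absolute values, but one level up, by first comparing $A^*A$ with $(\overline{\REAL A})^*\,\overline{\REAL A}$ and then descending to the moduli via the Heinz inequality. Throughout write $R=\REAL A=\frac{A+A^*}{2}$.

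First I would check that $|\overline{R}|$ even makes sense. Since $A$ is closed and hyponormal it is densely defined with $D(A)\subseteq D(A^*)$, so $D(R)=D(A)$ is dense and, as recalled in the preliminaries, $R$ is symmetric (hence closable); note that $\overline{R}$ need not be self-adjoint here, but that is immaterial, as all we need is that $\overline{R}=\overline{\REAL A}$ is closed and densely defined, so that $|\overline{R}|=\sqrt{(\overline{R})^*\overline{R}}$ is a well-defined positive self-adjoint operator.

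Next comes the only computational step. Because $R\subseteq\overline{R}$ we have $D(A)=D(R)\subseteq D(\overline{R})$, and for every $x\in D(A)$ one has $\overline{R}x=Rx=\frac{1}{2}(Ax+A^*x)$; the triangle inequality together with the hyponormality estimate $\|A^*x\|\leq\|Ax\|$ then gives
\[
\|\overline{R}x\|\leq\tfrac{1}{2}\big(\|Ax\|+\|A^*x\|\big)\leq\|Ax\|.
\]

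It remains to read this off in the order of Definition \ref{defn pos unbd schmud} and to take square roots. Applying the recalled identity $D(T)=D(|T|)$, $\|Tx\|=\||T|x\|$ (valid for any closed $T$) to $T=A$ and to $T=\overline{R}$, the inclusion $D(A)\subseteq D(\overline{R})$ and the estimate above translate exactly into $D(|A|)\subseteq D(|\overline{R}|)$ and $\||\overline{R}|x\|\leq\||A|x\|$; since $|A|=(A^*A)^{1/2}$ and $|\overline{R}|=((\overline{R})^*\overline{R})^{1/2}$, this is precisely the statement $A^*A\geq(\overline{R})^*\overline{R}\geq0$ in the sense of Definition \ref{defn pos unbd schmud}. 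Finally the (unbounded) Heinz inequality yields $\sqrt{A^*A}\geq\sqrt{(\overline{R})^*\overline{R}}$, that is, $|A|\geq|\overline{\REAL A}|$, which is the assertion.

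The delicate point is purely one of bookkeeping at the last step: Definition \ref{defn pos unbd schmud} compares two positive operators through their square roots, so the graph-norm estimate only delivers an order relation between $A^*A$ and $(\overline{R})^*\overline{R}$, and it is exactly the Heinz inequality that is needed to pass from there to the desired comparison of $|A|$ with $|\overline{\REAL A}|$. The genuinely operator-theoretic content is thus confined to the short hyponormality computation, everything else being a translation between equivalent formulations of the order.
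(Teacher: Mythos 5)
Your proposal is correct and follows essentially the same route as the paper's own proof: the hyponormality estimate $\|(\REAL A)x\|\leq\frac{1}{2}(\|Ax\|+\|A^*x\|)\leq\|Ax\|$ on $D(A)$, read through the identities $D(T)=D(|T|)$ and $\|Tx\|=\||T|x\|$ for closed $T$, gives $\left|\overline{\REAL A}\right|^2\leq|A|^2$ in the sense of the quadratic-form order, and the unbounded Heinz inequality then yields the claim. Your extra care in spelling out that $\overline{\REAL A}$ need only be closed and densely defined (not self-adjoint) for $|\overline{\REAL A}|$ to make sense, and in translating the graph-norm estimate into Definition~\ref{defn pos unbd schmud}, is exactly the bookkeeping the paper performs implicitly.
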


\begin{proof}
Since $A$ is hyponormal,  we have $D(A)\subset D(A^*)$ and so
$A+A^*$ is densely defined. Since $A$ is closed, $A+A^*$ is
symmetric. Therefore, $\REAL A\subset \overline{\REAL A}$. Let $x\in
D(A)$. Then
\[(\REAL A)x=(\overline{\REAL A})x.\]
 As above, it suffices to show that
\[\left|\overline{\REAL A}\right|^2\leq |A|^2.\]
By the closedness of $A$,
\[D(|A|)=D(A)=D(\REAL A)\subset D(\overline{\REAL A})=D(|\overline{\REAL A}|).\]
Let $x\in D(|A|)$. Using the hyponormality of $A$ implies that
\[\||\overline{\REAL A}|x\|=\|(\overline{\REAL A})x\|=\|(\REAL A)x\|\leq \frac{1}{2}(\|Ax\|+\|A^*x\|)\leq \|Ax\|=\||A|x\|,\]
i.e. $\left|\overline{\REAL A}\right|^2\leq |A|^2$. Using the Heinz
Inequality, we infer that
\[\left|\overline{\REAL A}\right|\leq |A|,\]
as coveted.
\end{proof}

Mutatis mutandis, we may prove:

\begin{pro}\label{qq}
Let $A$ be a closed hyponormal operator. Then
\[\left|\overline{\Ima A}\right|\leq |A|\]
where $\overline{\Ima A}$ denotes the closure of $\Ima A$.
\end{pro}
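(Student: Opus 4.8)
The plan is to mirror the proof of Proposition \ref{PP} almost verbatim, replacing the combination $A+A^*$ by $A-A^*$ and the real scalar $\tfrac12$ by $\tfrac{1}{2i}$. First I would record that the hypotheses already guarantee that $\Ima A=\frac{A-A^*}{2i}$ is a legitimate densely defined symmetric operator: hyponormality gives $D(A)\subset D(A^*)$, so $A-A^*$ is defined on $D(A)\cap D(A^*)=D(A)$, which is dense; and the closedness of $A$ (together with $(A-A^*)^*\supset A^*-A$) yields $\Ima A\subset(\Ima A)^*$. In particular $\Ima A$ is closable, $\Ima A\subset\overline{\Ima A}$, and the positive self-adjoint operator $|\overline{\Ima A}|$ is well defined (note that $\overline{\Ima A}$ need not itself be self-adjoint, since $A$ is only assumed hyponormal, but closedness is all the modulus requires).

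Next, exactly as in Proposition \ref{PP}, I would reduce the claimed inequality to the inequality of squares $|\overline{\Ima A}|^2\leq|A|^2$ by invoking the unbounded Heinz inequality, which is licit because both $|\overline{\Ima A}|$ and $|A|$ are positive self-adjoint. To compare these two positive operators in the sense of Definition \ref{defn pos unbd schmud}, I would first check the domain inclusion
\[D(|A|)=D(A)=D(\Ima A)\subset D(\overline{\Ima A})=D(|\overline{\Ima A}|),\]
whose equalities are the standard identity $D(T)=D(|T|)$ for closed $T$ applied to $A$ and to $\overline{\Ima A}$ respectively.

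The computational heart is the pointwise estimate. For $x\in D(|A|)=D(A)$ I would write, using $\|Tx\|=\||T|x\|$ and the fact that $\overline{\Ima A}$ agrees with $\Ima A$ on $D(A)$,
\[\||\overline{\Ima A}|x\|=\|(\Ima A)x\|=\tfrac12\|(A-A^*)x\|\leq\tfrac12\big(\|Ax\|+\|A^*x\|\big)\leq\|Ax\|=\||A|x\|,\]
where the final inequality is precisely the hyponormality $\|A^*x\|\leq\|Ax\|$. By Definition \ref{defn pos unbd schmud} this is the desired $|\overline{\Ima A}|^2\leq|A|^2$, and Heinz then delivers $\left|\overline{\Ima A}\right|\leq|A|$.

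I do not foresee a genuine obstacle, since the argument is structurally identical to that of Proposition \ref{PP}; the only point deserving a moment's care is the very first one, namely verifying that $\Ima A$ is symmetric so that passing to its closure is justified. The presence of the scalar $\frac{1}{2i}$, whose complex conjugate is $-\frac{1}{2i}$, means one must track signs when taking adjoints, but the two sign flips (one from conjugating the scalar, one from $(A-A^*)^*\supset A^*-A$) cancel and symmetry survives, so the rest of the proof goes through unchanged.
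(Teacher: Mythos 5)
Your proposal is correct and is precisely what the paper intends: the paper's entire ``proof'' of this proposition is the phrase ``Mutatis mutandis,'' deferring to the argument of Proposition \ref{PP}, and your write-up carries out exactly that adaptation (symmetry of $\Ima A$ via the cancelling sign flips, the domain inclusion, the pointwise estimate $\frac12\|(A-A^*)x\|\leq\|Ax\|$ from hyponormality, then Definition \ref{defn pos unbd schmud} and Heinz). No gaps; your extra care about the conjugated scalar $\frac{1}{2i}$ is the one point where ``mutatis mutandis'' genuinely needs checking, and you checked it.
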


The following result gives an important application to
self-adjointness.  Notice that its first proof was a little longer.
Then, we came across the paper \cite{JOCIC} and the proof became
shorter. The next theorem is also a partial answer in an unbounded
setting to a famous conjecture posed by Fong-Tsui in \cite{Fong-Tsui
1981} (asked in the $B(H)$ context). The analogous result on $B(H)$
was already obtained in \cite{Mortad-Fong-TSUI-VERY-FIRST-PAper}.

\begin{thm}\label{56}
Let $A$ be a normal operator such that
\[|A|\leq |\overline{\REAL A}|.\]
Then $A$ is self-adjoint.
\end{thm}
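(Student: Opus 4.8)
The plan is to first upgrade the hypothesis into an equality. Since a normal operator is automatically hyponormal, Proposition \ref{PP} applies to $A$ and yields $|\overline{\REAL A}|\leq |A|$. Combining this with the assumed inequality $|A|\leq|\overline{\REAL A}|$, and reading both through Definition \ref{defn pos unbd schmud}, I would deduce $D(|A|^{1/2})=D(|\overline{\REAL A}|^{1/2})$ together with $\||A|^{1/2}x\|=\||\overline{\REAL A}|^{1/2}x\|$ on this common domain. As two positive self-adjoint operators with the same closed quadratic form coincide, this gives $|A|^{1/2}=|\overline{\REAL A}|^{1/2}$ and hence the genuine operator equality
\[
|A|=|\overline{\REAL A}|.
\]

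Next I would convert this operator identity into a numerical one. Recall from the introduction that for any closed operator $T$ one has $D(T)=D(|T|)$ and $\|Tx\|=\||T|x\|$. Because $A$ is normal, $D(A)=D(A^*)=D(\REAL A)\subseteq D(\overline{\REAL A})$, and on $D(A)$ the closure $\overline{\REAL A}$ acts exactly as $\tfrac12(A+A^*)$. Since $|A|=|\overline{\REAL A}|$ forces $D(|A|)=D(|\overline{\REAL A}|)$ (so the equality is meaningful on all of $D(A)$), I obtain, for every $x\in D(A)$,
\[
\|Ax\|=\||A|x\|=\||\overline{\REAL A}|x\|=\|(\overline{\REAL A})x\|=\tfrac12\|(A+A^*)x\|.
\]
The decisive computation is then to expand $\|(A\pm A^*)x\|^2=\|Ax\|^2\pm2\REAL\langle Ax,A^*x\rangle+\|A^*x\|^2$ and use the normality identity $\|A^*x\|=\|Ax\|$. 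The displayed relation says $\|Ax\|^2=\tfrac14\|(A+A^*)x\|^2$, which after substitution reduces to $\|Ax\|^2=\REAL\langle Ax,A^*x\rangle$; feeding this back into the minus expansion gives $\|(A-A^*)x\|^2=2\|Ax\|^2-2\REAL\langle Ax,A^*x\rangle=0$. Hence $Ax=A^*x$ for all $x\in D(A)=D(A^*)$, so $A=A^*$ and $A$ is self-adjoint.

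The step I expect to demand the most care is the passage from the two one-sided inequalities to a true equality of the unbounded positive operators $|A|$ and $|\overline{\REAL A}|$: Definition \ref{defn pos unbd schmud} is phrased through form domains and square roots, so I must verify that equality of the two forms entails equality of the operators and, crucially, equality of the full domains $D(|A|)=D(|\overline{\REAL A}|)$, so that the norm identity $\|Ax\|=\|(\overline{\REAL A})x\|$ is available on all of $D(A)$ rather than on a mere core. Once that domain bookkeeping is settled, the rest is the short algebraic expansion above, and the normality hypothesis $\|A^*x\|=\|Ax\|$ does all the remaining work.
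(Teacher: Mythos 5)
Your argument is correct, and while its first half coincides with the paper's, the conclusion is reached by a genuinely different route. Like the paper, you begin by combining the hypothesis with Proposition \ref{PP} (applicable since normal operators are closed and hyponormal) to obtain the operator equality $|A|=|\overline{\REAL A}|$; in fact you justify this step more carefully than the paper, which simply asserts the equality, whereas you verify that the order on positive self-adjoint operators is antisymmetric by passing through the equality of the closed quadratic forms of $|A|^{1/2}$ and $|\overline{\REAL A}|^{1/2}$ and the uniqueness of the representing operator. From there the paper checks the domain identity $D[(\REAL A)^*]=D(\overline{\REAL A})=D(|A|)=D(A)$ and the norm identity $\|A^*x\|=\|Ax\|=\||A|x\|=\|\overline{\REAL A}x\|=\|(\REAL A)^*x\|$, and then invokes Theorem 3 of \cite{JOCIC}, a characterization of formally symmetric operators, to conclude $A=A^*$. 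You instead finish with an elementary, self-contained computation: from $\|Ax\|=\tfrac{1}{2}\|(A+A^*)x\|$ and the normality identity $\|A^*x\|=\|Ax\|$ you extract $\REAL\langle Ax,A^*x\rangle=\|Ax\|^2$, and substituting this into the expansion of $\|(A-A^*)x\|^2$ gives zero, whence $Ax=A^*x$ on $D(A)=D(A^*)$ and so $A=A^*$. Your route buys independence from the external citation --- notably, the authors remark that their original proof was longer until they came across \cite{JOCIC}, and your parallelogram-type cancellation is presumably close to what such a direct proof looks like --- at the cost of a few extra lines; the paper's route is shorter on the page and situates the theorem within Joci\'{c}'s characterization, but both proofs ultimately rest on the same two ingredients: the equality $|A|=|\overline{\REAL A}|$ and the relation $\|A^*x\|=\|Ax\|$ on the common domain.
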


\begin{proof}
Since $A$ is normal, it is closed and hyponormal and so by
Proposition \ref{PP}, $|\overline{\REAL A}|\leq |A|$. A glance at
the hypothesis of the theorem yields
\[|A|=|\overline{\REAL A}|.\]
By the normality of $A$, we have $D(A)=D(A^*)$. Since
$\overline{\REAL A}$ is self-adjoint, we may write
\[D[(\REAL A)^*]=D[(\overline{\REAL A})^*]=D(\overline{\REAL A})=D(|A|)=D(A)\]
and
\[\|A^*x\|=\|Ax\|=\||A|x\|=\||\overline{\REAL A}|x\|=\|\overline{\REAL A}x\|=\|(\REAL A)^*x\|.\]
Therefore, the conditions of Theorem 3 in \cite{JOCIC} are fulfilled
and so $A=A^*$, as needed.
\end{proof}

As alluded to above, the power of Theorem \ref{thm triangle
inequality MAIN} say, lies in the fact that one operator is
hyponormal and so there is no room for the (two parameter) Spectral
Theorem. If we work with strongly commuting normal operators (even
both unbounded), then the proof becomes a slightly simpler. Besides,
this will be used to generalize some of the results above.

\begin{thm}\label{abs A+B both unbd normal TTHM}
Let $A$ and $B$ be two strongly commuting unbounded normal
operators. Then,
\[|\overline{A+B}|\leq |A|+|B|.\]
\end{thm}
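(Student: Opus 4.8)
The plan is to mimic the proof of Proposition~\ref{pppp} and to pass everything through the joint (two-parameter) spectral measure of the strongly commuting pair $A,B$. Writing $A=\int_{\C}z\,dE_A$ and $B=\int_{\C}z'\,dF_B$, strong commutativity gives $E_A(I)F_B(J)=F_B(J)E_A(I)$ for all Borel sets $I,J$, so that $E_{A,B}(z,z')=E_A(z)F_B(z')$ is a two-parameter spectral measure on $\C\times\C$. With respect to $E:=E_{A,B}$ the functional calculus then yields
\[A=\int_{\C}\int_{\C}z\,dE,\quad B=\int_{\C}\int_{\C}z'\,dE,\quad |A|=\int_{\C}\int_{\C}|z|\,dE,\quad |B|=\int_{\C}\int_{\C}|z'|\,dE.\]

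First I would show that $\overline{A+B}=C$, where $C:=\int_{\C}\int_{\C}(z+z')\,dE$ is normal. The union of the bounded spectral subspaces $\ran E(\{|z|\le n,~|z'|\le n\})$, $n\in\N$, is a common core for $A$, $B$ and $C$, and it is contained in $D(A)\cap D(B)=D(A+B)$; since moreover $D(A+B)\subseteq D(C)$ and $A+B$ agrees with $C$ there, it follows that $D(A+B)$ is itself a core for $C$, and as $C$ is closed this forces $\overline{A+B}=C$. In particular $\overline{A+B}$ is normal and
\[|\overline{A+B}|=|C|=\int_{\C}\int_{\C}|z+z'|\,dE.\]

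Next I would identify the right-hand side. Since $(|z|+|z'|)^2$ is comparable to $|z|^2+|z'|^2$, the natural domain of $\int_{\C}\int_{\C}(|z|+|z'|)\,dE$ is exactly $D(|A|)\cap D(|B|)=D(|A|+|B|)$, and on this common domain the spectral integral agrees with $|A|+|B|$; in particular $|A|+|B|$ is self-adjoint and positive, so no closure is needed on the right. The inequality now reduces to the triangle inequality $|z+z'|\le|z|+|z'|$ in $\C$: invoking the monotonicity of spectral integrals (if $0\le f\le g$ then $\int f\,dE\le\int g\,dE$ in the sense of Definition~\ref{defn pos unbd schmud}, which one checks at once on the scalar measures $d\|E(\cdot)x\|^2$ after passing to square roots) with $f=|z+z'|$ and $g=|z|+|z'|$ delivers $|\overline{A+B}|\le|A|+|B|$.

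The functional calculus identities and the monotonicity of spectral integrals are routine. The step I expect to require the most care is the domain bookkeeping that converts the pointwise triangle inequality into the operator inequality: one must verify that $D(A+B)$ really is a core for $C$, so that $\overline{A+B}$ is genuinely the full normal operator $\int_{\C}\int_{\C}(z+z')\,dE$ and not a proper restriction of it, and that $\int_{\C}\int_{\C}(|z|+|z'|)\,dE$ coincides with the a priori only symmetric sum $|A|+|B|$ on the nose, so that Definition~\ref{defn pos unbd schmud} is being applied to two bona fide positive self-adjoint operators.
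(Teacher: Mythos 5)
Your proposal is correct and takes essentially the same route as the paper's proof: both pass to the joint two-parameter spectral measure $E_{A,B}$, identify $\overline{A+B}$ with the normal operator $\iint_{\C^2}(z+z')\,dE_{A,B}$, and convert the pointwise triangle inequality $|z+z'|\leq |z|+|z'|$ into the operator inequality via the functional calculus. The only (harmless) differences are in bookkeeping: the paper cites Lemma 4.15.1 of Putnam for the self-adjointness of $|A|+|B|$ and upgrades the form inequality $\preceq$ to $\leq$ through Lemma 10.10 of Schm\"{u}dgen, whereas you obtain the self-adjointness by identifying $|A|+|B|$ with $\iint_{\C^2}(|z|+|z'|)\,dE_{A,B}$ (using $(|z|+|z'|)^2\asymp |z|^2+|z'|^2$ for the domain) and verify Definition~\ref{defn pos unbd schmud} directly on the scalar measures $d\|E_{A,B}(\cdot)x\|^2$, also spelling out the core argument for $\overline{A+B}=C$ that the paper leaves implicit.
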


\begin{proof}
Since $A$ and $B$ are normal, by the Spectral Theorem we may write
\[A=\int_{\C}zdE_A\text{ and } B=\int_{\C}z'dF_B,\]
where $E_A$ and $F_B$ denote the associated spectral measures. As
$A$ and $B$ strongly commute, then so do $|A|$ and $|B|$. Since
$|A|$ and $|B|$ are also self-adjoint and positive, by Lemma 4.15.1
in \cite{Putnam-book-1967}, it follows that $|A|+|B|$ is
self-adjoint (hence closed) and positive. As for domains, we have
\[D(|A|+|B|)=D(|A|)\cap D(|B|)=D(A)\cap D(B)=D(A+B)\subset D(\overline{A+B})\subset D(|\overline{A+B}|).\]

By the strong commutativity, we have
\[E_A(I)F_B(J)=F_B(J)E_A(I)\]
for all Borel sets $I$ and $J$ in $\C$. Hence
\[E_{A,B}(z,z')=E_A(z)F_B(z')\]
defines a two parameter spectral measure. Thus
\[C=\iint\limits_{\C^2}(z+z')dE_{A,B}\]
defines a normal operator such that $C=\overline{A+B}$. Since
$|z+z'|\leq |z|+|z'|$ for all $z,z'$, it follows that
\[|\overline{A+B}|=\iint\limits_{\R^2}|z+z'|dE_{A,B}(z,z')\leq |A|+|B|\]
(we used the positivity of the measure $<E_{A,B}(\Delta)x,x>$ with
$\Delta$ being a Borel subset of $\R^2$). In fact, what we have
proved so far is "only"
\[|\overline{A+B}|\preceq
|A|+|B|\] where $\preceq$ is defined as in Definition 10.5 (Page
230) in \cite{SCHMUDG-book-2012}. In other language, we have shown
that
\[<|\overline{A+B}|x,x>\leq <(|A|+|B|)x,x>\]
for all $x\in D(|A|+|B|)\subset D(|\overline{A+B}|)$.

 Since $|\overline{A+B}|$ and
$|A|+|B|$ are self-adjoint and positive, by Lemma 10.10 (again in
\cite{SCHMUDG-book-2012}), "$\preceq$" becomes "$\leq $", that is,
we have established the desired inequality:
\[|\overline{A+B}|\leq |A|+|B|.\]
\end{proof}

\begin{rema}
The strong commutativity of the normal $A$ and $B$ is not sufficient
to make the sum $A+B$ closed even when $A$ and $B$ are self-adjoint
as shown by the usual trick: Consider an unbounded self-adjoint
operator $A$ with a \textit{non closed} domain $D(A)$. Then $B=-A$
is also self-adjoint and evidently strongly commutes with $A$ but
the bounded $A+B=0$ on $D(A)$ is not closed.
\end{rema}

\begin{cor}\label{triangle inequality normal real imag cor}
Let $A$ be a normal operator. Then
\[|A|\leq |\overline{\REAL A}|+|\overline{\Ima A}|.\]
\end{cor}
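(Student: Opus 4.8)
The plan is to deduce the inequality directly from Theorem~\ref{abs A+B both unbd normal TTHM} by exhibiting $A$ as the closure of a sum of two strongly commuting normal operators, namely its closed real part and $i$ times its closed imaginary part. First I would recall that, since $A$ is normal, both $\overline{\REAL A}$ and $\overline{\Ima A}$ are self-adjoint, hence normal. Consequently $i\overline{\Ima A}$ is normal as well (it is skew-adjoint), and a one-line computation using the self-adjointness of $\overline{\Ima A}$ gives
\[|i\overline{\Ima A}|=\sqrt{(i\overline{\Ima A})^*(i\overline{\Ima A})}=\sqrt{(\overline{\Ima A})^2}=|\overline{\Ima A}|.\]

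Next I would establish that $\overline{\REAL A}$ and $i\overline{\Ima A}$ strongly commute and that the closure of their sum is exactly $A$. Invoking the Spectral Theorem, write $A=\int_{\C}z\,dE_A$, so that
\[\overline{\REAL A}=\int_{\C}\REAL(z)\,dE_A\text{ and }\overline{\Ima A}=\int_{\C}\Ima(z)\,dE_A.\]
Being functions of one and the same spectral measure $E_A$, the operators $\overline{\REAL A}$ and $i\overline{\Ima A}$ strongly commute. In the same representation one identifies the closure of their sum:
\[\overline{\overline{\REAL A}+i\overline{\Ima A}}=\int_{\C}\bigl(\REAL(z)+i\Ima(z)\bigr)\,dE_A=\int_{\C}z\,dE_A=A.\]

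Finally, applying Theorem~\ref{abs A+B both unbd normal TTHM} to the strongly commuting normal operators $\overline{\REAL A}$ and $i\overline{\Ima A}$ yields
\[|A|=\bigl|\overline{\overline{\REAL A}+i\overline{\Ima A}}\bigr|\leq |\overline{\REAL A}|+|i\overline{\Ima A}|=|\overline{\REAL A}|+|\overline{\Ima A}|,\]
which is the claimed inequality. The main obstacle I anticipate is purely the bookkeeping with closures and domains: one must verify that $\overline{\REAL A}+i\overline{\Ima A}$ genuinely \emph{closes up} to $A$ rather than merely agreeing with $A$ on the dense domain $D(A)$, and that the commutativity invoked is true strong (spectral) commutativity as required by Theorem~\ref{abs A+B both unbd normal TTHM}, not just commutativity on a core. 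Both points are settled cleanly by the spectral representation above, which is why I would lean on the Spectral Theorem from the outset instead of arguing through a chain of operator inclusions.
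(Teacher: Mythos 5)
Your proof is correct and follows exactly the paper's route: decompose $A$ via the strongly commuting normal operators $\overline{\REAL A}$ and $i\overline{\Ima A}$ and apply Theorem~\ref{abs A+B both unbd normal TTHM}. The only difference is that you spell out, via the spectral representation $A=\int_{\C}z\,dE_A$, the strong commutativity and the identification $A=\overline{\overline{\REAL A}+i\overline{\Ima A}}$, details the paper leaves implicit.
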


\begin{proof}
Since $A$ is normal, $\overline{\REAL A}$ and $\overline{\Ima A}$
are strongly commuting self-adjoint operators. Hence
$\overline{\REAL A}$ and $i\overline{\Ima A}$ are strongly commuting
normal operators. Now, apply Theorem \ref{abs A+B both unbd normal
TTHM}.
\end{proof}

\begin{cor}
Let $A$ and $B$ be normal operators where $B\in B(H)$. If $BA\subset
AB$, then
\[|A+B|\leq |A|+|B|.\]
\end{cor}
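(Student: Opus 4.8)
The plan is to recognize this corollary as an immediate specialization of a result already established, namely Theorem \ref{thm triangle inequality MAIN}. The one observation that makes this work is that every bounded normal operator is hyponormal. Indeed, if $B\in B(H)$ is normal, then $BB^*=B^*B$ gives
\[\|B^*x\|^2=<BB^*x,x>=<B^*Bx,x>=\|Bx\|^2,\]
so $\|B^*x\|\leq\|Bx\|$ for every $x$, and $D(B)=D(B^*)=H$ since $B$ is everywhere defined. This is precisely the definition of hyponormality. Hence, under the present hypotheses ($A$ normal, $B\in B(H)$ normal, $BA\subset AB$), all the assumptions of Theorem \ref{thm triangle inequality MAIN} hold verbatim, and that theorem yields $|A+B|\leq|A|+|B|$ directly. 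No further work is needed, and $A+B$ is already closed because $A$ is closed and $B\in B(H)$, so $|A+B|$ makes sense.

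Alternatively, one could route the argument through Theorem \ref{abs A+B both unbd normal TTHM}, treating both operators as normal. For this I would first upgrade the formal inclusion $BA\subset AB$ to strong commutativity of the associated spectral measures. By the Fuglede(-Putnam) Theorem \ref{Fuglede Putnam classic}, the normality of $A$ converts $BA\subset AB$ into $BA^*\subset A^*B$, so the bounded operator $B$ commutes with both $A$ and $A^*$; invoking the Spectral Theorem, this forces $B$ to commute with every spectral projection $E_A(\cdot)$, which is exactly strong commutativity of $A$ and $B$. Since $B\in B(H)$, the sum $A+B$ is closed, so $\overline{A+B}=A+B$ and $|\overline{A+B}|=|A+B|$. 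Theorem \ref{abs A+B both unbd normal TTHM} then delivers $|A+B|=|\overline{A+B}|\leq|A|+|B|$.

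I expect essentially no genuine obstacle here, since the whole content sits in the theorems already proved. The only point demanding any care arises in the second route, namely the passage from the algebraic commutation $BA\subset AB$ to the strong commutativity of the spectral measures; this is where Fuglede-Putnam together with the Spectral Theorem carry the load. The first route sidesteps even this, because it merely reads off that normality is stronger than hyponormality. For that reason I would record the first route as the proof, as it is the shortest and invokes the least machinery.
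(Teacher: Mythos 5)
Your proposal is correct, and your recorded (first) route is genuinely different from the paper's own proof. The paper proves this corollary by your \emph{alternative} route: it observes that, $B$ being bounded and both operators normal, the inclusion $BA\subset AB$ amounts to strong commutativity of the spectral measures of $A$ and $B$ (this equivalence is recorded in the paper's introduction, so the Fuglede--Putnam bookkeeping you sketch is subsumed by a citation), notes that $A+B$ is closed since $B\in B(H)$, and then invokes Theorem \ref{abs A+B both unbd normal TTHM}. Your primary route instead specializes Theorem \ref{thm triangle inequality MAIN} via the elementary remark that a bounded normal operator is hyponormal (indeed $\|B^*x\|=\|Bx\|$ for all $x$), which makes the corollary a verbatim instance of that theorem with no spectral-measure conversion at all. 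Each approach buys something: yours is shorter and invokes less machinery, piggybacking on the harder hyponormal theorem whose proof runs through Theorems \ref{A^*B hyponormal THM} and \ref{Dehimi-Mortad-HYPONORMAL-ABS-VALUE-THM}; the paper's route, by contrast, rests only on the two-parameter Spectral Theorem argument of Theorem \ref{abs A+B both unbd normal TTHM}, which is self-contained for normal operators and is the natural specialization given where the corollary sits in the text. Both are complete proofs of the statement.
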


\begin{proof}Since $B\in B(H)$, the condition $BA\subset
AB$ amounts to the strong commutativity of $A$ and $B$. It is also
plain that $A+B$ is closed and this finishes the proof.
\end{proof}

As an interesting consequence, we obtain a characterization of
invertibility for the class of unbounded normal operators.

\begin{pro}\label{invertible normal unbounded prop}
Let $A$ be a normal operator. Then
\[A\text{ is invertible}\Longleftrightarrow |\overline{\REAL A}|+|\overline{\Ima A}|\text{ is invertible}.\]
Particularly, if $\lambda=\alpha+i\beta$, then
\[\lambda\in\sigma(A)\Longleftrightarrow |\overline{\REAL A}-\alpha I|+|\overline{\Ima A}-\beta I| \text{ is not invertible.}\]
\end{pro}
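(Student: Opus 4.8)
The plan is to reduce the whole statement to the spectral representation of the normal operator $A$ together with the elementary numerical estimate
\[\tfrac{1}{\sqrt2}\bigl(|\alpha|+|\beta|\bigr)\leq |\alpha+i\beta|\leq |\alpha|+|\beta|,\qquad \alpha,\beta\in\R.\]
Two preliminary reductions make the equivalence transparent. First, for a normal $A$ the bounded invertibility of $A$ is the same as that of $|A|$: writing $A=\int_{\C}z\,dE$ by the Spectral Theorem one has $0\in\sigma(A)$ if and only if $0\in\sigma(|A|)$, since $|z|=0$ only at $z=0$ (equivalently, one may use the polar decomposition $A=U|A|=|A|U$ with $U$ unitary). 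Second, since $A$ is normal, $\overline{\REAL A}$ and $\overline{\Ima A}$ are strongly commuting self-adjoint operators, so (exactly as in the proof of Theorem \ref{abs A+B both unbd normal TTHM}) the operator $|\overline{\REAL A}|+|\overline{\Ima A}|$ is self-adjoint and positive, which is what lets the comparison machinery apply.

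Next I would turn the numerical estimate into an operator sandwich. Using the spectral measure $E$ of $A$ and writing $z=\alpha+i\beta$, we have
\[|A|=\int_{\C}|z|\,dE,\quad |\overline{\REAL A}|=\int_{\C}|\alpha|\,dE,\quad |\overline{\Ima A}|=\int_{\C}|\beta|\,dE,\]
whence the functional calculus yields
\[\tfrac{1}{\sqrt2}\bigl(|\overline{\REAL A}|+|\overline{\Ima A}|\bigr)\leq |A|\leq |\overline{\REAL A}|+|\overline{\Ima A}|,\]
the right-hand inequality being nothing but Corollary \ref{triangle inequality normal real imag cor}.

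With this sandwich the main equivalence drops out of the comparison result recalled in the Introduction (if $S\geq T\geq0$ and $T$ is boundedly invertible, then so is $S$). If $A$, equivalently $|A|$, is invertible, then from $|\overline{\REAL A}|+|\overline{\Ima A}|\geq |A|\geq0$ we conclude that $|\overline{\REAL A}|+|\overline{\Ima A}|$ is invertible; conversely, if $|\overline{\REAL A}|+|\overline{\Ima A}|$ is invertible, then so is $\tfrac{1}{\sqrt2}(|\overline{\REAL A}|+|\overline{\Ima A}|)$, and $|A|\geq \tfrac{1}{\sqrt2}(|\overline{\REAL A}|+|\overline{\Ima A}|)\geq0$ forces $|A|$, hence $A$, to be invertible. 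For the spectral statement I would apply this equivalence to the normal operator $A-\lambda I$; since $\lambda I$ is bounded, $\REAL(A-\lambda I)=\REAL A-\alpha I$ and $\Ima(A-\lambda I)=\Ima A-\beta I$ have unchanged domains, so their closures are $\overline{\REAL A}-\alpha I$ and $\overline{\Ima A}-\beta I$, and $\lambda\in\sigma(A)$ means precisely that $A-\lambda I$ is not invertible.

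The step needing the most care is the lower bound in the operator sandwich: unlike the upper bound it is not among the triangle inequalities already established, so it must be justified through the joint (two-parameter) spectral measure of the strongly commuting pair $(\overline{\REAL A},\overline{\Ima A})$ — equivalently through $E$ — with due attention to the domain inclusion $D(|A|^{1/2})\subseteq D((|\overline{\REAL A}|+|\overline{\Ima A}|)^{1/2})$ demanded by Definition \ref{defn pos unbd schmud}. Once this inequality is in place, the remaining arguments are routine bookkeeping.
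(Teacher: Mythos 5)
Your proposal is correct, and its skeleton is the same as the paper's: trap $|A|$ between $|\overline{\REAL A}|+|\overline{\Ima A}|$ and a constant multiple of it, then invoke the comparison principle recalled in the Introduction ($S\geq T\geq 0$ with $T$ boundedly invertible forces $S$ boundedly invertible), and obtain the spectral statement by translating to the normal operator $A-\lambda I$. The differences are at the lemma level. For the lower bound the paper does not touch the joint spectral measure: it quotes Propositions \ref{PP} and \ref{qq} (which are proved by hyponormality estimates, $\|(\REAL A)x\|\leq\|Ax\|$, not by spectral integrals) and adds them, by the same form-domain bookkeeping as in earlier proofs, to get $|\overline{\REAL A}|+|\overline{\Ima A}|\leq 2|A|$; your direct computation with $E_A$ and the pointwise bound $|\alpha|+|\beta|\leq\sqrt{2}\,|z|$ yields the sharper constant $\sqrt{2}$ in place of $2$ --- immaterial for invertibility, but it keeps the whole argument inside the functional calculus, whereas the paper's route via Propositions \ref{PP} and \ref{qq} uses machinery valid for the larger class of closed hyponormal operators. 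For the last step, that invertibility of $|A|$ gives invertibility of $A$, the paper argues that $\|Ax\|=\||A|x\|$ makes the normal operator $A$ left invertible and then cites \cite{Dehimi-Mortad-2018}; your polar-decomposition argument ($A=U|A|$ with $U$ unitary commuting with $|A|$) is a legitimate substitute for normal $A$. Two small points you rightly flag but should nail down if writing this in full: the identification $\overline{\REAL A}=\int_{\C}\REAL z\,dE_A$ (i.e.\ that $D(A)$ is a core for the integral, which the paper also uses tacitly elsewhere), and the form-domain inclusion required by the definition of $\leq$, which here follows from $\bigl(|\alpha|+|\beta|\bigr)^2\leq 2|z|^2$, so that $D(|A|^{1/2})\subseteq D\bigl((|\overline{\REAL A}|+|\overline{\Ima A}|)^{1/2}\bigr)$ and the domains in the sandwich all match up.
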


\begin{proof}Since $A$ is invertible, so is $|A|$. Hence by
Corollary \ref{triangle inequality normal real imag cor},
$|\overline{\REAL A}|+|\overline{\Ima A}|$ too is invertible.
Conversely, by Propositions \ref{PP} \& \ref{qq}
\[\left|\overline{\REAL A}\right|\leq |A| \text{ and } \left|\overline{\Ima A}\right|\leq |A|.\]
Hence, by the same arguments as in the previous proofs, we may
easily establish
\[\left|\overline{\REAL A}\right|+\left|\overline{\Ima A}\right|\leq 2|A|.\]
Thus, the invertibility of $|\overline{\REAL A}|+|\overline{\Ima
A}|$ implies the invertibility of $|A|$. Since the latter means that
the normal operator $A$ is left invertible, by
\cite{Dehimi-Mortad-2018} we get that $A$ is invertible.
\end{proof}

As an interesting consequence, we have the following.

\begin{cor}Let $A$ be a normal operator. Then
\[\sigma(A)\subset \sigma(\overline{\REAL A})+i\sigma(\overline{\Ima A})\]
where the sum of sets is defined in the usual fashion.
\end{cor}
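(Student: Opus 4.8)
The plan is to reduce the inclusion to the invertibility criterion already proved in Proposition \ref{invertible normal unbounded prop}, applied to the shifted operators $A - \lambda I$ rather than to $A$. Fix $\lambda = \alpha + i\beta \in \sigma(A)$ with $\alpha,\beta \in \R$. Since $\overline{\REAL A}$ and $\overline{\Ima A}$ are self-adjoint, their spectra are real, so saying that $\lambda$ belongs to $\sigma(\overline{\REAL A}) + i\,\sigma(\overline{\Ima A})$ means precisely that $\alpha \in \sigma(\overline{\REAL A})$ and $\beta \in \sigma(\overline{\Ima A})$; this is what I would prove. The first observation is that $A - \lambda I$ is again normal, with $\overline{\REAL(A-\lambda I)} = \overline{\REAL A} - \alpha I$ and $\overline{\Ima(A-\lambda I)} = \overline{\Ima A} - \beta I$, since subtracting a bounded scalar operator commutes both with forming real/imaginary parts and with taking closures. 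Feeding $A - \lambda I$ into the ``particularly'' clause of Proposition \ref{invertible normal unbounded prop} then yields
\[\lambda \in \sigma(A) \Longleftrightarrow P + Q \text{ is not boundedly invertible},\]
where $P = |\overline{\REAL A} - \alpha I|$ and $Q = |\overline{\Ima A} - \beta I|$ are positive self-adjoint operators which, being functions of the strongly commuting pair $\overline{\REAL A},\overline{\Ima A}$, strongly commute; thus $P+Q$ is self-adjoint and positive.

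I would then argue by contraposition in each coordinate separately. Suppose, towards a contradiction, that $\alpha \notin \sigma(\overline{\REAL A})$. Then $\overline{\REAL A} - \alpha I$ is boundedly invertible, and since it is self-adjoint so is its absolute value $P$. The key comparison is $P + Q \geq P \geq 0$ in the sense of Definition \ref{defn pos unbd schmud}. Granting this, the bounded-invertibility result recalled from Page 200 of \cite{WEI} (with $S = P+Q$ and $T = P$) forces $P+Q$ to be boundedly invertible, whence $\lambda \notin \sigma(A)$ by the equivalence above, contradicting $\lambda \in \sigma(A)$. Therefore $\alpha \in \sigma(\overline{\REAL A})$, and the identical argument with the roles of the real and imaginary parts interchanged gives $\beta \in \sigma(\overline{\Ima A})$. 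This proves $\lambda \in \sigma(\overline{\REAL A}) + i\,\sigma(\overline{\Ima A})$.

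The main obstacle is the form inequality $P + Q \geq P$, i.e. the domain inclusion $D((P+Q)^{1/2}) \subseteq D(P^{1/2})$ together with $\|(P+Q)^{1/2}x\| \geq \|P^{1/2}x\|$ for all $x$ in the form domain. I would establish it through the joint spectral measure $E$ of the strongly commuting pair $(P,Q)$ on $[0,\infty)^2$: writing $P = \int s\,dE$ and $Q = \int t\,dE$, one has $P+Q = \int (s+t)\,dE$, and since $s \leq s+t$ pointwise the domain inclusion is automatic while
\[\|(P+Q)^{1/2}x\|^2 = \int (s+t)\,d\langle E x,x\rangle \geq \int s\,d\langle E x,x\rangle = \|P^{1/2}x\|^2.\]
This is exactly the ordering machinery already exploited to obtain $|\overline{\REAL A}| + |\overline{\Ima A}| \leq 2|A|$ in the proof of Proposition \ref{invertible normal unbounded prop}, so in the write-up it can simply be invoked ``as before.''
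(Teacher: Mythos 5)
Your proof is correct and takes essentially the same route as the paper: both invoke the ``particularly'' clause of Proposition \ref{invertible normal unbounded prop} for $\lambda=\alpha+i\beta\in\sigma(A)$ and conclude that neither $|\overline{\REAL A}-\alpha I|$ nor $|\overline{\Ima A}-\beta I|$ can be boundedly invertible, since otherwise the sum would be. The only difference is one of detail: where the paper asserts in a single line that invertibility of one summand forces invertibility of the sum, you justify it explicitly via the form inequality $P+Q\geq P\geq 0$ (through the joint spectral measure of the strongly commuting pair) and the comparison result from Page 200 of \cite{WEI}, which is exactly the machinery the paper itself uses inside the proof of Proposition \ref{invertible normal unbounded prop}.
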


\begin{proof}
Let $\lambda=\alpha+i\beta\in\sigma(A)$. Proposition \ref{invertible
normal unbounded prop} tells us that $|\overline{\REAL A}-\alpha
I|+|\overline{\Ima A}-\beta I|$ is not invertible. If either
$|\overline{\REAL A}-\alpha I|$ or $|\overline{\Ima A}-\beta I|$ is
invertible, then $|\overline{\REAL A}-\alpha I|+|\overline{\Ima
A}-\beta I|$ would be invertible! Therefore, both $|\overline{\REAL
A}-\alpha I|$ and $|\overline{\Ima A}-\beta I|$ are not invertible,
i.e. $\overline{\REAL A}-\alpha I$ and $\overline{\Ima A}-\beta I$
are not invertible. In other words, $\alpha\in
\sigma(\overline{\REAL A})$ and $\beta\in \sigma(\overline{\Ima
A})$. Consequently, $\lambda\in \sigma(\overline{\REAL
A})+i\sigma(\overline{\Ima A})$, as needed.
\end{proof}

We also obtain a very simple proof of the "realness" of the spectrum
of unbounded self-adjoint operators.

\begin{cor}
Let $A$ be a self-adjoint operator. Then $\sigma(A)\subset\R$.
\end{cor}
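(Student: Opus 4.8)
The plan is to specialize the spectral inclusion just established to the self-adjoint situation, where the imaginary part degenerates. First I would note that a self-adjoint $A$ is in particular normal, so all the preceding machinery applies. Since $A=A^*$, on the dense domain $D(A)=D(A^*)$ we have
\[\Ima A=\frac{A-A^*}{2i}=0,\]
so $\Ima A$ is the zero operator restricted to $D(A)$. Its closure is then the everywhere-defined bounded zero operator, $\overline{\Ima A}=0\in B(H)$, and consequently $\sigma(\overline{\Ima A})=\{0\}$. The only small verification here is that the closure of the densely defined zero operator is genuinely the bounded zero operator on all of $H$, which is immediate.

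The crucial point is that I must extract the \emph{imaginary part} of each spectral value rather than feed the set inclusion $\sigma(A)\subset\sigma(\overline{\REAL A})+i\sigma(\overline{\Ima A})$ into the argument verbatim. Indeed, since $\sigma(\overline{\Ima A})=\{0\}$ and $\overline{\REAL A}=A$, that inclusion collapses to $\sigma(A)\subset\sigma(A)$ and carries no information. Instead I would return to Proposition \ref{invertible normal unbounded prop}: for $\lambda=\alpha+i\beta\in\sigma(A)$ with $\alpha,\beta\in\R$ the real and imaginary parts, the positive self-adjoint operator $|\overline{\REAL A}-\alpha I|+|\overline{\Ima A}-\beta I|$ fails to be invertible. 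Because $\overline{\Ima A}=0$, its second summand equals $|-\beta I|=|\beta|I$.

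I would then argue by contradiction on $\beta$. Suppose $\beta\neq 0$; then $|\beta|I$ is a boundedly invertible positive operator. Since
\[|\overline{\REAL A}-\alpha I|+|\beta|I\geq |\beta|I\geq 0,\]
the domination principle for positive self-adjoint operators recalled after Definition \ref{defn pos unbd schmud} forces $|\overline{\REAL A}-\alpha I|+|\beta|I$ to be boundedly invertible, contradicting the non-invertibility above. Hence $\beta=0$, i.e. every $\lambda\in\sigma(A)$ is real, which is exactly $\sigma(A)\subset\R$.

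The main obstacle is the circularity trap flagged in the second paragraph: invoking the set-sum corollary directly would implicitly demand $\sigma(\overline{\REAL A})=\sigma(A)\subset\R$, the very conclusion being sought. The resolution is to read off $\beta\in\sigma(\overline{\Ima A})=\{0\}$ through the invertibility criterion of Proposition \ref{invertible normal unbounded prop}, which controls the real and imaginary coordinates separately; everything else is routine.
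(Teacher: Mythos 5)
Your proof is correct and follows essentially the same route as the paper: both rest on Proposition \ref{invertible normal unbounded prop} (your use of its ``particularly'' clause with $\overline{\Ima A}=0$ is exactly the paper's application of the main equivalence to the normal operator $A-\alpha I-i\beta I$, whose modulus criterion is likewise $|A-\alpha I|+|\beta|I$) together with the domination principle that $S\geq |\beta| I\geq 0$ with $\beta\neq 0$ forces $S$ to be boundedly invertible. Your argument by contradiction versus the paper's direct contrapositive, and your preliminary observation that the set-sum corollary is vacuous here, are only cosmetic differences.
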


\begin{proof}
Let $\lambda\not\in\R$, i.e. $\lambda=\alpha+i\beta$ ($\alpha\in\R$,
$\beta\in\R^*$). Since $A-\alpha I$ is self-adjoint, it follows that
$A-\alpha I-i\beta I$ is normal. By the invertibility of $|\beta|I$,
it follows that of $|A-\alpha I|+|\beta I|$ ($\geq |\beta|I$). By
Proposition \ref{invertible normal unbounded prop}, this just means
that $A-\lambda I$ is invertible, that is, $\lambda\not\in
\sigma(A)$.
\end{proof}

Using a proof by induction, we can extend Theorem \ref{abs A+B both
unbd normal TTHM} to a finite family of normal operators.

\begin{thm}
Let $(A_i)_{i=1,\cdots,n}$ be pairwise strongly commuting normal
operators. Then
\[|\overline{A_1+A_2+\cdots+A_n}|\leq |A_1|+|A_2|+\cdots+|A_n|.\]
\end{thm}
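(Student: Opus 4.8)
The plan is to argue by induction on $n$, with Theorem \ref{abs A+B both unbd normal TTHM} furnishing the base case $n=2$ (the case $n=1$ being the trivial equality $|A_1|\leq|A_1|$). So suppose the statement holds for every family of $n-1$ pairwise strongly commuting normal operators, and let $A_1,\dots,A_n$ be pairwise strongly commuting and normal. First I would put $S=\overline{A_1+\cdots+A_{n-1}}$ and record two structural facts: that $S$ is itself normal, and that $S$ strongly commutes with $A_n$. Both are read off from the spectral picture already used in the proofs of Proposition \ref{pppp} and Theorem \ref{abs A+B both unbd normal TTHM}: the pairwise strong commutativity of $A_1,\dots,A_{n-1}$ produces a joint $(n-1)$-parameter spectral measure with $S=\int(z_1+\cdots+z_{n-1})\,dE$, which exhibits $S$ as normal; and since the spectral measure of $A_n$ commutes with each $E_{A_i}$ for $i<n$, it commutes with the product measure and hence with every function of it, in particular with $S$. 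Thus $S$ and $A_n$ strongly commute and the base case applies to them.

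With these in hand, Theorem \ref{abs A+B both unbd normal TTHM} applied to the strongly commuting normal pair $S,A_n$ gives
\[|\overline{S+A_n}|\leq |S|+|A_n|.\]
Passing to the joint $n$-parameter spectral measure $E$ of the whole family, both $\overline{S+A_n}$ and $\overline{A_1+\cdots+A_n}$ are represented as $\int(z_1+\cdots+z_n)\,dE$, so they coincide and the last display becomes
\[|\overline{A_1+\cdots+A_n}|\leq |S|+|A_n|.\]
The induction hypothesis supplies $|S|\leq |A_1|+\cdots+|A_{n-1}|$, so it remains only to add $|A_n|$ to both sides and invoke transitivity of the order of Definition \ref{defn pos unbd schmud}, keeping track of the domain inclusions $D(|A_1|+\cdots+|A_n|)\subseteq D(|\overline{A_1+\cdots+A_n}|)$ exactly as in the proof of Theorem \ref{abs A+B both unbd normal TTHM}.

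The genuine obstacle is bookkeeping rather than conceptual, and it concentrates in that final order step: for unbounded positive self-adjoint operators the relation $\leq$ is defined through square-root domains, so neither monotonicity under addition nor transitivity is automatic. Strong commutativity, however, rescues everything. The operators $|S|,|A_1|,\dots,|A_n|$ commute pairwise (each $|A_i|$ is a function of its own spectral measure, and $|S|$ is a function of $E$, all of which commute), hence share a common joint spectral measure, and the inequalities reduce to pointwise inequalities of nonnegative measurable symbols. Adding the symbol of $|A_n|$ preserves $|S|\leq|A_1|+\cdots+|A_{n-1}|$, yielding $|S|+|A_n|\leq |A_1|+\cdots+|A_n|$, and transitivity then closes the induction. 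I therefore expect the delicate points to be purely the verification that $S$ is normal and strongly commutes with $A_n$ (so the base case is legitimately applicable) together with the correct closing-up of the partial sums; once these are in place the inequality follows as above.
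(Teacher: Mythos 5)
Your proposal is correct and is exactly the route the paper intends: the paper offers no written proof beyond the remark that the result follows ``by induction'' from Theorem \ref{abs A+B both unbd normal TTHM}, and your induction --- closing up the partial sum $S=\overline{A_1+\cdots+A_{n-1}}$, verifying via the joint spectral measure that $S$ is normal and strongly commutes with $A_n$, identifying $\overline{S+A_n}$ with $\overline{A_1+\cdots+A_n}$, and settling the order bookkeeping by reducing to pointwise inequalities of symbols under the common $n$-parameter spectral measure (as in the paper's own $n=2$ proof, via Lemma 10.10 of \cite{SCHMUDG-book-2012}) --- supplies precisely the details the paper leaves implicit. The only quibble is that transitivity of the order in Definition \ref{defn pos unbd schmud} is in fact automatic (the square-root domain inclusions and norm inequalities chain directly), so your caution there is unnecessary, though harmless.
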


\begin{cor}Let $(A_i)_{i=1,\cdots,n}$ be pairwise commuting normal
operators. If $B\in B(H)$ is hyponormal and $BA_i\subset A_iB$ for
$i=1,2,\cdots,n$, then
\[|\overline{A_1+A_2+\cdots+A_n}+B|\leq |A_1|+|A_2|+\cdots+|A_n|+|B|.\]
\end{cor}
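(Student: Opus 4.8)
The plan is to reduce the statement to the two triangle inequalities already proved, namely Theorem~\ref{thm triangle inequality MAIN} and the finite-sum theorem immediately preceding this corollary. Interpreting ``commuting'' in the sense of strong commutativity (as is customary for normal operators in this paper), I would first set $A:=\overline{A_1+A_2+\cdots+A_n}$. By the $n$-parameter spectral measure construction used inductively from the proof of Theorem~\ref{abs A+B both unbd normal TTHM}, $A$ is normal, its common domain $\bigcap_i D(A_i)=D(A_1+\cdots+A_n)$ is dense, and $A+B=\overline{A_1+\cdots+A_n}+B$ is closed since $A$ is closed and $B\in B(H)$. Because $B$ is a bounded hyponormal operator, the whole statement will follow from Theorem~\ref{thm triangle inequality MAIN} once the commutation $BA\subset AB$ is in hand.

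Establishing that commutation is the crux. From the hypotheses $BA_i\subset A_iB$, any $x\in\bigcap_i D(A_i)$ satisfies $Bx\in D(A_i)$ and $BA_ix=A_iBx$ for each $i$, whence $Bx\in\bigcap_i D(A_i)$ and $B(A_1+\cdots+A_n)x=(A_1+\cdots+A_n)Bx$; that is, $B(A_1+\cdots+A_n)\subset(A_1+\cdots+A_n)B$. To pass this inclusion to the closure I would invoke the Tian--Jorgensen equivalence recorded in the Remark following Lemma~\ref{fuglede-type closure normal LEM}, which asserts that for $B\in B(H)$ and a densely defined closable $T$ one has $BT\subset TB\Longleftrightarrow B\overline{T}\subset\overline{T}B$. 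Applying it with $T=A_1+\cdots+A_n$ (densely defined, with normal closure $\overline{T}=A$) gives exactly $BA\subset AB$.

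With $A$ normal, $B\in B(H)$ hyponormal, and $BA\subset AB$, Theorem~\ref{thm triangle inequality MAIN} yields $|A+B|\leq|A|+|B|$, where the left-hand side is $|\overline{A_1+\cdots+A_n}+B|$ as required. The preceding finite-sum theorem gives $|A|=|\overline{A_1+\cdots+A_n}|\leq|A_1|+\cdots+|A_n|$. It remains to add $|B|$ and chain the two estimates: since $|B|$ is bounded and positive, for positive self-adjoint $S\leq T$ one has $D((S+|B|)^{1/2})=D(S^{1/2})$ and $\|(S+|B|)^{1/2}x\|^2=\|S^{1/2}x\|^2+\||B|^{1/2}x\|^2$, so $S\leq T$ forces $S+|B|\leq T+|B|$ in the sense of Definition~\ref{defn pos unbd schmud}. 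Hence $|A|+|B|\leq|A_1|+\cdots+|A_n|+|B|$, and transitivity of $\leq$ delivers $|\overline{A_1+\cdots+A_n}+B|\leq|A_1|+\cdots+|A_n|+|B|$.

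The main obstacle I expect is the second paragraph: lifting the commutation relation from the (possibly non-closed) finite sum $A_1+\cdots+A_n$ to its closure $A$, which relies on the density of $\bigcap_i D(A_i)$ and on the applicability of the Tian--Jorgensen equivalence. Once $BA\subset AB$ is secured, everything else is routine manipulation of the form order, handed to us by the two cited theorems.
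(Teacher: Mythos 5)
Your proof is correct and follows exactly the route the paper intends: the corollary is stated there without proof, as an immediate consequence of Theorem~\ref{thm triangle inequality MAIN} applied to $A=\overline{A_1+\cdots+A_n}$ together with the finite-sum theorem that precedes it. The two supporting details you supply --- lifting $BA_i\subset A_iB$ to $B\overline{A_1+\cdots+A_n}\subset \overline{A_1+\cdots+A_n}B$ via the Tian--Jorgensen remark, and the stability of the order of Definition~\ref{defn pos unbd schmud} under adding the bounded positive operator $|B|$ --- are precisely the points the paper leaves tacit, and both are sound.
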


Next, we pass to another triangle inequality.

\begin{thm}\label{thm triangle inequality MAIN 2}
Let $A$ be a normal operator and let $B\in B(H)$ be hyponormal. If
$BA\subset AB$, then
\[||A|-|B||\leq |A+B|.\]
\end{thm}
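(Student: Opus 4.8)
The plan is to follow the architecture of the proof of Theorem \ref{thm triangle inequality MAIN}, replacing the triangle inequality by its reverse and substituting the lower bound supplied by Theorem \ref{Dehimi-Mortad-HYPONORMAL-ABS-VALUE-THM} for the upper bound used there. Since $A$ is closed and $B\in B(H)$, the operator $A+B$ is closed, so $|A+B|$ is well defined with $D(|A+B|)=D(A+B)=D(A)$. Moreover $|A|$ is self-adjoint and $|B|\in B(H)$ is self-adjoint, so $|A|-|B|$ is self-adjoint (hence closed) with $D(|A|-|B|)=D(|A|)=D(A)$; consequently $D(\left||A|-|B|\right|)=D(A)$ and $\|\left||A|-|B|\right|x\|=\|(|A|-|B|)x\|$ for $x\in D(A)$. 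Because every domain in sight equals $D(A)$, Definition \ref{defn pos unbd schmud} together with the (unbounded) Heinz inequality reduces the claim to the pointwise estimate $\|(|A|-|B|)x\|\leq\||A+B|x\|$, i.e. to $(|A|-|B|)^2\leq|A+B|^2$, for all $x\in D(A)$.

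First I would record the two squared norms. Using $BA^*\subset A^*B$ (Fuglede Theorem, since $A$ is normal), so that $Bx\in D(A)$ whenever $x\in D(A)$, together with $\|Ax\|=\||A|x\|$ and $\|Bx\|=\||B|x\|$, expanding the inner products gives, for $x\in D(A)$,
\[\||A+B|x\|^2=\||A|x\|^2+\||B|x\|^2+2\REAL<A^*Bx,x>\]
and
\[\|(|A|-|B|)x\|^2=\||A|x\|^2+\||B|x\|^2-2\REAL<|A|x,|B|x>.\]
The key algebraic simplification is $<|A|x,|B|x>=<|B||A|x,x>=<|A^*B|x,x>$: indeed Proposition \ref{abs A abs B abs B abs A PRO} yields $|B||A|x=|A||B|x$ on $D(A)$, while Corollary \ref{abs value corollary AB B*A} and Theorem \ref{abs value AB MAIN THM} identify $|A||B|=|A^*B|$, which is positive self-adjoint. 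In particular $<|A|x,|B|x>=<|A^*B|x,x>\geq0$ is real.

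Subtracting the two identities leaves
\[\||A+B|x\|^2-\|(|A|-|B|)x\|^2=2\REAL<A^*Bx,x>+2<|A^*B|x,x>,\]
so it remains to see that the right-hand side is nonnegative. This is exactly where the hypotheses bite, and it is the step I expect to be the main obstacle: the cross term $\REAL<A^*Bx,x>$ may be negative, and the point is that it cannot overwhelm $<|A^*B|x,x>$. By Theorem \ref{A^*B hyponormal THM} the operator $A^*B$ is closed and hyponormal, so Theorem \ref{Dehimi-Mortad-HYPONORMAL-ABS-VALUE-THM} applies and gives $|<A^*Bx,x>|\leq<|A^*B|x,x>$; hence $-\REAL<A^*Bx,x>\leq|<A^*Bx,x>|\leq<|A^*B|x,x>$ and the displayed difference is $\geq0$. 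This establishes $(|A|-|B|)^2\leq|A+B|^2$, and the Heinz inequality then yields $\left||A|-|B|\right|\leq|A+B|$, as desired. The domain bookkeeping and the identity $<|A|x,|B|x>=<|A^*B|x,x>$ are routine once Proposition \ref{abs A abs B abs B abs A PRO} and Theorem \ref{abs value AB MAIN THM} are invoked, so the only delicate point is this final estimate controlling the sign of the cross term.
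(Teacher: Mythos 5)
Your proposal is correct and follows essentially the same route as the paper's proof: the same domain bookkeeping reducing everything to $D(A)$, the same identification $\langle |A|x,|B|x\rangle=\langle |A^*B|x,x\rangle$ via Proposition \ref{abs A abs B abs B abs A PRO} and Theorem \ref{abs value AB MAIN THM}, and the same use of Theorems \ref{A^*B hyponormal THM} and \ref{Dehimi-Mortad-HYPONORMAL-ABS-VALUE-THM} to control the sign of the cross term before applying the Heinz inequality. Your only deviation is organizational: you compare the two squared norms by subtracting them, while the paper chains the inequalities directly, which is an immaterial difference.
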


\begin{proof}
The proof is in essence fairly similar to that of Theorem \ref{thm
triangle inequality MAIN} and we omit some details. Clearly,
$|A|-|B|$ and $A+B$ are closed and
\[D(|A+B|)=D(A+B)=D(A)=D(|A|)=D(|A|-|B|)=D(||A|-|B||).\]
Let $x\in D(A)$. Then
\begin{align*}
\|||A|-|B||x\|^2&=\|(|A|-|B|)x\|^2\\
&=<(|A|-|B|)x,(|A|-|B|)x>\\
&=\||A|x\|^2+\||B|x\|^2-<|A|x,|B|x>-<|B|x,|A|x>\\
&=\||A|x\|^2+\||B|x\|^2-<|B||A|x,x>-<|A||B|x,x>\\
&=\||A|x\|^2+\||B|x\|^2-2<|A||B|x,x>\\
&=\||A|x\|^2+\||B|x\|^2-2<|A^*||B|x,x>\\
&=\||A|x\|^2+\||B|x\|^2-2<|A^*B|x,x>.
\end{align*}
Since $A^*B$ is closed and hyponormal, we have for all $x$:
\[|<A^*Bx,x>|\leq <|A^*B|x,x>\]
and so
\[-2<|A^*B|x,x>\leq -2|<A^*Bx,x>|\leq 2\REAL(<A^*Bx,x>).\]
Hence
\begin{align*}
\|||A|-|B||x\|^2&\leq \||A|x\|^2+\||B|x\|^2+2\REAL(<A^*Bx,x>)\\
&=\||A|x\|^2+\||B|x\|^2+<A^*Bx,x>+\overline{<A^*Bx,x>}\\
&=\||A|x\|^2+\||B|x\|^2+<A^*Bx,x>+<x,A^*Bx>\\
&=\||A|x\|^2+\||B|x\|^2+<Bx,Ax>+<Ax,Bx>\\
&=\|Ax\|^2+\|Bx\|^2+<Bx,Ax>+<Ax,Bx>\\
&=\|(A+B)x\|^2\\
&=\||A+B|x\|^2.
\end{align*}
Therefore,
\[\|||A|-|B||x\|\leq \||A+B|x\|,\]
i.e. we have shown that
\[||A|-|B||^2\leq |A+B|^2\]
or merely
\[||A|-|B||\leq |A+B|.\]
\end{proof}

\begin{cor}
Let $A$ be a normal operator and let $B\in B(H)$ be hyponormal. If
$BA\subset AB$, then
\[||A|-|B||\leq |A-B|.\]
\end{cor}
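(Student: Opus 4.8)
The plan is to deduce this corollary directly from Theorem \ref{thm triangle inequality MAIN 2} by replacing $B$ with $-B$. The point is that all three hypotheses of that theorem are stable under this substitution, and the absolute value is insensitive to the sign of $B$, so the inequality transfers immediately with $A+B$ turning into $A-B$ and $|{-B}|$ collapsing back to $|B|$.

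First I would check that $-B$ still satisfies the hypotheses of Theorem \ref{thm triangle inequality MAIN 2}. Clearly $-B\in B(H)$. It is hyponormal because
\[\|(-B)^*x\|=\|B^*x\|\leq \|Bx\|=\|(-B)x\|\quad\text{for all }x\in H,\]
so the defining inequality is preserved. The commutation relation also persists: from $BA\subset AB$ we get $(-B)A\subset A(-B)$, since multiplying the inclusion by the scalar $-1$ does not alter the domains and only changes the common value by a sign. Next I would record the key identity $|{-B}|=|B|$, which holds because $(-B)^*(-B)=B^*B$, whence the two operators have the same (unique) positive self-adjoint square root.

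With these observations in hand, Theorem \ref{thm triangle inequality MAIN 2} applied to the pair $(A,-B)$ yields
\[\big||A|-|{-B}|\big|\leq |A+(-B)|,\]
and substituting $|{-B}|=|B|$ on the left and rewriting $A+(-B)=A-B$ on the right gives exactly
\[\big||A|-|B|\big|\leq |A-B|,\]
as claimed. I expect no genuine obstacle here: the entire content is the verification that hyponormality of $B$, the inclusion $BA\subset AB$, and the value of the modulus are all preserved under $B\mapsto -B$, each of which is routine. This mirrors precisely how the corollary to Theorem \ref{thm triangle inequality MAIN} was obtained from that theorem.
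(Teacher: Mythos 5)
Your proof is correct and matches the paper's intent exactly: the paper states this corollary without proof precisely because it follows from Theorem \ref{thm triangle inequality MAIN 2} by the routine substitution $B\mapsto -B$, whose hypotheses (hyponormality, the inclusion $(-B)A\subset A(-B)$, and $|{-B}|=|B|$) you verify correctly.
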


One may wonder whether we may adapt the proof of Theorem \ref{abs
A+B both unbd normal TTHM} to generalize the previous theorem to a
couple of unbounded and strongly commuting normal operators? The
first observation leads to an issue with domains which seems to be
an insurmontable difficulty unless we impose an extra condition. For
instance:

\begin{thm}\label{generalized triangle inequality TWO NOrmal THM}
Let $A$ and $B$ be two strongly commuting unbounded normal
operators. If $D(\overline{A+B})\subset D(\overline{|A|-|B|})$,
then,
\[\left|\overline{|A|-|B|}\right|\leq |\overline{A+B}|.\]
\end{thm}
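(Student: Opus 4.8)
The plan is to mimic the proof of Theorem \ref{abs A+B both unbd normal TTHM} by diagonalising $A$ and $B$ simultaneously and reducing the operator inequality to a pointwise inequality of symbols, the new ingredient being the way the domain hypothesis enters. First I would apply the Spectral Theorem to write $A=\int_{\C}z\,dE_A$ and $B=\int_{\C}z'\,dF_B$; strong commutativity makes $E_{A,B}(z,z')=E_A(z)F_B(z')$ a two parameter spectral measure, and, exactly as in the proof of Theorem \ref{abs A+B both unbd normal TTHM}, $\overline{A+B}=\iint_{\C^2}(z+z')\,dE_{A,B}$ is normal with $|\overline{A+B}|=\iint_{\C^2}|z+z'|\,dE_{A,B}$, whereas $|A|=\iint_{\C^2}|z|\,dE_{A,B}$ and $|B|=\iint_{\C^2}|z'|\,dE_{A,B}$. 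I would then introduce the self-adjoint operator $M=\iint_{\C^2}(|z|-|z'|)\,dE_{A,B}$, which coincides with $|A|-|B|$ on $D(|A|)\cap D(|B|)$, so that $|A|-|B|\subset M$ and hence $N:=\overline{|A|-|B|}\subset M$ since $M$ is closed. The pointwise inequality powering the estimate is the reverse triangle inequality applied to $z$ and $-z'$, namely $\big||z|-|z'|\big|\leq|z+z'|$, equivalently $(|z|-|z'|)^2\leq|z+z'|^2$.

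The key step I would carry out is a norm comparison on $D(\overline{A+B})$. Fix $x\in D(\overline{A+B})=D(|\overline{A+B}|)$ and set $\mu_x=\langle E_{A,B}(\cdot)x,x\rangle$. By hypothesis $x\in D(\overline{|A|-|B|})=D(N)$, whence $Nx=Mx$ because $N\subset M$; since $N$ and $M$ are closed, the identity $\|Tx\|=\||T|x\|$ valid on $D(T)=D(|T|)$ for closed $T$ (recalled in the introduction) yields
\[\big\||N|x\big\|=\|Nx\|=\|Mx\|=\big\||M|x\big\|=\left(\iint_{\C^2}\big||z|-|z'|\big|^2\,d\mu_x\right)^{1/2}\leq\left(\iint_{\C^2}|z+z'|^2\,d\mu_x\right)^{1/2}=\big\||\overline{A+B}|x\big\|.\]
Combined with the inclusion $D(|\overline{A+B}|)\subset D(|N|)$ (again furnished by the hypothesis), Definition \ref{defn pos unbd schmud} reads this as $|N|^2\leq|\overline{A+B}|^2$; the unbounded Heinz Inequality then upgrades it to $|N|\leq|\overline{A+B}|$, that is $\big|\overline{|A|-|B|}\big|\leq|\overline{A+B}|$, as claimed.

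The main obstacle I anticipate is the discrepancy between $N=\overline{|A|-|B|}$ and $M$: in the unbounded setting the closure of $|A|-|B|$ can be a proper restriction of $M=\iint_{\C^2}(|z|-|z'|)\,dE_{A,B}$, so $|\overline{|A|-|B|}|$ cannot simply be identified with $\iint_{\C^2}\big||z|-|z'|\big|\,dE_{A,B}$ and one cannot run the clean $\preceq$-argument of Theorem \ref{abs A+B both unbd normal TTHM}. The hypothesis $D(\overline{A+B})\subset D(\overline{|A|-|B|})$ is precisely what circumvents this: it guarantees that each $x$ in $D(|\overline{A+B}|)$ lies in $D(N)=D(|N|)$, so that $|N|x$ is defined and agrees in norm with $|M|x$, which is exactly the link needed to pass to the spectral integrals. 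Without such an assumption $|N|x$ need not even be defined for $x\in D(\overline{A+B})$, which is why, as the authors remark, some extra condition seems unavoidable.
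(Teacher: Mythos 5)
Your argument is correct, and at its core it is the paper's own: both proofs diagonalise $A$ and $B$ simultaneously through the joint spectral measure $E_{A,B}$ and push the pointwise reverse triangle inequality $\left||z|-|z'|\right|\leq |z+z'|$ through the spectral integrals, with the domain hypothesis entering in the same place (to secure $D(|\overline{A+B}|)\subset D(|\overline{|A|-|B|}|)$). The only divergence is the endgame. The paper simply writes $\left|\overline{|A|-|B|}\right|=\iint_{\R^2}\left||z|-|z'|\right|dE_{A,B}$ --- tacitly identifying $\overline{|A|-|B|}$ with your $M$ --- and then ``argues as in Theorem \ref{abs A+B both unbd normal TTHM}'', i.e.\ establishes the form inequality $\preceq$ and upgrades it to $\leq$ via Lemma 10.10 of \cite{SCHMUDG-book-2012}; you instead keep only the inclusion $\overline{|A|-|B|}\subset M$, compare $\left\||N|x\right\|$ with $\left\||\overline{A+B}|x\right\|$ vectorwise, and close with Definition \ref{defn pos unbd schmud} plus the Heinz inequality, which is the mechanism of Theorem \ref{thm triangle inequality MAIN} and Proposition \ref{PP}. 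Two remarks. First, the obstacle you anticipate is in fact vacuous: for spectral integrals against one and the same spectral measure one always has $\overline{f(E)+g(E)}=(f+g)(E)$ (truncate by $E_{A,B}(\{|z|\leq n,\ |z'|\leq n\})$ and let $n\to\infty$), so $\overline{|A|-|B|}=M$; this standard closure fact is exactly what the paper's terse proof uses without comment, and your more conservative route buys independence from it at no cost. Second, had you invoked $N=M$, applying the pointwise bound at the level of the square roots $|N|^{1/2}$ and $|\overline{A+B}|^{1/2}$ would yield the needed domain inclusion automatically, suggesting the hypothesis $D(\overline{A+B})\subset D(\overline{|A|-|B|})$ can be dispensed with --- which is precisely Question (3) posed at the end of the paper.
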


\begin{proof}Most details will be omitted. Write
\[A=\int_{\C}zdE_A \text{ and } B=\int_{\C}z'dF_B.\]
Then
\[\left|\overline{|A|-|B|}\right|=\iint\limits_{\R^2}||z|-|z'||dE_{A,B}\]
and
\[|\overline{A+B}|=\iint\limits_{\R^2}|z+z'|dE_{A,B}.\]
Argue as in Theorem \ref{abs A+B both unbd normal TTHM} to obtain
the desired inequality:
\[\left|\overline{|A|-|B|}\right|\leq |\overline{A+B}|.\]
\end{proof}

\begin{rema}
The condition $D(\overline{A+B})\subset D(\overline{|A|-|B|})$ may
be dropped at the cost of assuming that $B$ (say) is $A$-bounded
with a relative bounded strictly less than one. In such case $A+B$
is closed. Since $|B|$ is also $|A|$-bounded, $|A|-|B|$ too is
closed. Everything else remains unchanged.
\end{rema}

We finish with a related inequality.

\begin{pro}
If $A$ is a normal operator, then
\[\left|\overline{|\overline{\REAL A}|-|\overline{\Ima A}|}\right|\leq |A|.\]
\end{pro}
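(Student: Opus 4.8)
The plan is to reduce everything to a single application of the Spectral Theorem for the normal operator $A$, in the spirit of the proof of Theorem \ref{abs A+B both unbd normal TTHM}, but now with only one spectral measure in play. Since $A$ is normal, I would write $A=\int_{\C}z\,dE_A$. Because $\overline{\REAL A}$ and $\overline{\Ima A}$ are self-adjoint and are obtained from $A$ through the real symbols $z\mapsto \REAL z$ and $z\mapsto \Ima z$, the functional calculus for the \emph{single} operator $A$ gives
\[
\overline{\REAL A}=\int_{\C}(\REAL z)\,dE_A,\qquad \overline{\Ima A}=\int_{\C}(\Ima z)\,dE_A,
\]
and hence, composing with the absolute value,
\[
|\overline{\REAL A}|=\int_{\C}|\REAL z|\,dE_A,\qquad |\overline{\Ima A}|=\int_{\C}|\Ima z|\,dE_A.
\]

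First I would record the elementary pointwise inequality on which the whole argument rests: for every $z=x+iy\in\C$,
\[
\big||\REAL z|-|\Ima z|\big|=\big||x|-|y|\big|\leq \sqrt{x^2+y^2}=|z|,
\]
which follows at once by squaring, since $(|x|-|y|)^2=x^2+y^2-2|x||y|\leq x^2+y^2$. Writing $g(z)=\big||\REAL z|-|\Ima z|\big|$, the monotonicity of the functional calculus for nonnegative real Borel symbols (Schm\"udgen) will then deliver $g(A)\leq |A|$ in the sense of Definition \ref{defn pos unbd schmud}, \emph{once $g(A)$ is correctly identified with the operator in the statement}.

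That identification is where the only real work lies, and it is a pure domain/core matter — the main obstacle. Setting $f_1(z)=|\REAL z|$ and $f_2(z)=|\Ima z|$, one always has $f_1(A)-f_2(A)\subset (f_1-f_2)(A)$, and since $(f_1-f_2)(A)$ is self-adjoint (its symbol being real) it equals the closure of $|\overline{\REAL A}|-|\overline{\Ima A}|$ provided $D(f_1(A))\cap D(f_2(A))$ is a core for $(f_1-f_2)(A)$. I would prove this by the standard truncation argument in the multiplication-operator model $A\cong M_\phi$ on some $L^2(\mu)$: for $\psi\in D((f_1-f_2)(A))$ the cut-offs $\psi\,\mathbf{1}_{\{|f_1\circ\phi|\le n,~|f_2\circ\phi|\le n\}}$ lie in $D(f_1(A))\cap D(f_2(A))$ and converge to $\psi$ in the graph norm of $(f_1-f_2)(A)$ by dominated convergence. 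Consequently $\overline{|\overline{\REAL A}|-|\overline{\Ima A}|}=(f_1-f_2)(A)$, and composing once more with $|\cdot|$ gives $\left|\overline{|\overline{\REAL A}|-|\overline{\Ima A}|}\right|=|f_1-f_2|(A)=g(A)$.

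The point I would stress is that, unlike Theorem \ref{generalized triangle inequality TWO NOrmal THM}, no extra domain hypothesis is needed here: the operators $|\overline{\REAL A}|$, $|\overline{\Ima A}|$, their difference, and $|A|$ are all governed by the \emph{same} spectral measure $E_A$, and the inclusions $D(|A|)\subset D(f_1(A))\cap D(f_2(A))\subset D(g(A))$ follow directly from $|\REAL z|,|\Ima z|,g(z)\le |z|$. Thus $|A|$ and $g(A)$ share the comparison domain required by Definition \ref{defn pos unbd schmud}, and the pointwise bound $g(z)\le|z|$ first yields the quadratic-form inequality $g(A)\preceq |A|$ and then, via Lemma 10.10 in \cite{SCHMUDG-book-2012}, the asserted
\[
\left|\overline{|\overline{\REAL A}|-|\overline{\Ima A}|}\right|\leq |A|.
\]
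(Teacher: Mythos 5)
Your proof is correct and takes essentially the same route as the paper: Spectral Theorem, the pointwise bound $\bigl||\REAL z|-|\Ima z|\bigr|\leq |z|$, the domain inclusion $D(|A|)=D(A)\subset D(|\overline{\REAL A}|)\cap D(|\overline{\Ima A}|)$, and the passage from the form inequality $\preceq$ to $\leq$ via Lemma 10.10 of \cite{SCHMUDG-book-2012}, exactly as in Theorem \ref{abs A+B both unbd normal TTHM}. The only (welcome) difference is presentational: where the paper forms the joint two-parameter spectral measure of the strongly commuting self-adjoint operators $\overline{\REAL A}$ and $\overline{\Ima A}$ and leaves the identification of $\overline{|\overline{\REAL A}|-|\overline{\Ima A}|}$ with the corresponding spectral integral implicit, you work with the single measure $E_A$ and justify that identification explicitly by the truncation/core argument in the multiplication-operator model.
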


\begin{proof}
We know that $||\REAL z|-|\Ima z||\leq |z|$ for any complex $z$.
Also,
\[|\overline{\REAL A}|=\int_{\R}|\REAL z|dE_{\overline{\REAL A}}\text{ and }|\overline{\Ima A}|=\int_{\R}|\Ima z|dF_{\overline{\Ima A}}.\]
Since $|\overline{\REAL A}|$ and $|\overline{\Ima A}|$ are commuting
self-adjoint operators thanks to the normality of $A$, we may form
the double integral defining $|\overline{|\overline{\REAL
A}|-|\overline{\Ima A}|}$. Therefore, we need only check the wanted
inclusion of domains (and then proceed as in the proof of Theorem
\ref{abs A+B both unbd normal TTHM}). Since $A$ is normal,
$D(A)=D(A^*)$. Hence
\[D(A)=D(\REAL A)\subset D(\overline{\REAL A})=D(|\overline{\REAL A}|) \text{ and } D(A)=D(|\overline{\Ima A}|)\]
and so
\[D(|A|)=D(A)\subset D(|\overline{\REAL A}|)\cap D(|\overline{\Ima A}|)=D\left(\left|\overline{|\overline{\REAL A}|-|\overline{\Ima A}|}\right|\right),\]
finishing the proof.
\end{proof}

\section{Questions}

Although we have more or less succeeded in dealing with the absolute
value of unbounded operators, some questions remain unfortunately
unanswered. For instance:

\begin{enumerate}
  \item If $A,B\in B(H)$ are hyponormal and commuting, then do we
  have
  \[|A+B|\leq |A|+|B|?\]
  This question is interesting in the sense that there are no
  counterexamples if $\dim H<\infty$ for it is shown that the
  previous inequality holds for commuting normal operators (and as
  is known the classes of normal and hyponormal operators coincide
  when $\dim H<\infty$).
  \item Can Theorem \ref{56} be weakened to the class of closed hyponormal
  operators without making further assumptions?
  \item As observed above, can we prove
  \[\left|\overline{|A|-|B|}\right|\leq |\overline{A+B}|\]
  by solely assuming that $A$ and $B$ are strongly commuting normal
  operators?
\end{enumerate}

\end{document}